\newcommand{\bR}{\mathbb{R}}
\newcommand{\bP}{\mathbb{P}}
\newtheorem{theorem}{Theorem}
\newtheorem{definition}[theorem]{Definition}
\newtheorem{proposition}[theorem]{Proposition}
\DeclareMathOperator{\sgn}{sgn}
\numberwithin{equation}{section}
\begin{document}

\title{\bf Ruin Theory Problems in Simple SDE Models with Large Deviation Asymptotics}
\author{Efstathia Bougioukli and Michael A. Zazanis\\ Department of Statistics\\
Athens University of Economics and Business, Athens, Greece 10434}
\date{}

\maketitle

\begin{abstract}
We examine hitting probability problems for Ornstein-Uhlenbeck (OU) processes and Geometric Brownian motions (GBM) with
respect to exponential boundaries related to problems arising in risk theory and asset and liability models in pension funds.
In Section 2 we consider the OU process described by the Stochastic Differential Equation (SDE) 
$dX_t = \mu X_t dt + \sigma dW_t$ with $X_0=x_0$ evolving between a lower and an upper deterministic exponential boundary. 
Both the finite horizon ``ruin probability'' problem and the corresponding infinite horizon problem is examined in the low noise case, 
using the Wentzell-Freidlin approach in order to obtain logarithmic asymptotics for the probability of hitting either the lower or the 
upper boundary. The resulting variational problems are studied in detail. The exponential rate characterizing the ruin probability 
and the ``path to ruin'' are obtained by their solution. 
Logarithmic asymptotics for the meeting probability in a pair of OU processes with different positive drift coefficients, 
driven by independent Brownian motions is also obtained using Wentzell-Freidlin techniques. 
The optimal paths followed by the two processes and the meeting time $T$ are determined by solving a variational problem 
with transversality conditions. In Section 3 a corresponding problem involving a Geometric Brownian motion is considered. 
Since in this case, an exact, closed form solution is also available and we take advantage of this situation in order to explore
numerically the quality of the Large Deviations results obtained using the Wentzell-Freidlin approach. \\

\noindent \textsc{Keywords: Ornstein-Uhlenbeck process, Ruin Probability, Wentzell-Freidlin method Geometric Brownian Motion.}
\end{abstract}

\section{Introduction}

We examine simple linear Stochastic Differential Equations (SDE) describing Ornstein-Uhlenbeck (OU) and Geometric Brownian motion (GBM)
processes with positive drift and consider the ``ruin problem'' of hitting an upper or lower exponential boundary. This problem is
not analytically tractable for the OU process in the general case and we use the Wentzel-Freidlin approach in order to obtain
Large Deviations estimates for the ruin probability. More specifically, if the OU process describing the free reserves process
$\{X_t\}$ is the solution of the SDE $dX_t = \mu X_t dt + \sigma dW_t$ with $X_0=x_0$ given, where $\mu>0$ and $\{W_t\}$ is
standard Brownian motion and if $V(t):=v_0e^{\beta t}$ and $U(t):= u_0 e^{\alpha t}$ are two exponential (deterministic)
boundary curves, assuming that initially the free reserves lie between these values, i.e.\ $0<v_0<x_0<u_0$ and that
$0< \beta < \mu < \alpha$.
Both the finite horizon ruin probability problem and the infinite horizon problem are examined. These problems may of course
be formulated in terms of a second order PDE with curved (exponential) boundaries in the plane and solved numerically.
(An alternative approach, involving a time change argument is also discussed briefly.) The main thrust of the analysis
however involves Large Deviations techniques and in particular the Wentzell-Freidlin approach in order to obtain logarithmic
asymptotics for the probability of hitting either the lower or the upper boundary. These low-noise asymptotics are valid when the
variance $\sigma$ is small and hence the event of hitting either boundary is rare. The exponential rate characterizing
this probability is obtained by solving a variational problem which also gives the ``path to ruin''. We begin with
a careful and detailed analysis of the finite horizon problem of hitting a lower boundary. The infinite horizon
problem both for hitting the lower and the upper exponential boundary is treated using the transversality conditions
approach of the calculus of variations. In addition, for the OU process with a more general linear drift resulting
from the SDE $dX_t = (\mu X_t +r) dt + \sigma dW_t$, the probability of hitting an upper exponential boundary
$u_0 e^{\alpha t}$ is examined (with $0<\mu<\alpha$).

We also consider the problem of two independent OU processes, $\{X_t\}$, $\{Y_t\}$, with
initial values $x_0>y_0$ and average growth rates $\alpha$ and $\beta$ respectively such
that $\alpha > \beta$ so that,
in the absence of noise, it would hold that $X_t > Y_t$ for all $t>0$. We examine, again using
the Wentzell-Freidlin approach, the probability that the two processes meet. The optimal
paths followed by the two processes and the meeting time $T$ is
determined by solving a variational problem with trasversality conditions.

In section \ref{sec:GBM} a corresponding problem involving a Geometric Brownian motion described by the SDE
$dX_t = \mu X_t dt + \sigma X_t dW_t$ with $X_0=x_0$ is examined, together with an upper and a
lower exponential boundary. Again the Wentzell-Freidlin theory is used. In this case however, an
exact solution is also possible, and therefore we are able to obtain an idea of the accuracy of the
logarithmic asymptotics we propose. As expected, when the variance constant $\sigma$ becomes smaller,
the quality of the approximation improves. The case of two correlated Geometric Brownian motions is
also discussed. These models are inspired by the Gerber and Shiu model of assets and liabilities in
pension funds.

Such models arise naturally when analyzing systems with compounding assets.
Consider the following collective risk model: Claims are i.i.d.\ random variables $\{Y_i\}$,
with distribution $F$ on $\mathbb{R}^+$, and they occur according to an independent Poisson
process with points $\{T_n\}$ and rate $\lambda$. We denote by $N(t):=\sum_{i=1}^\infty {\bf 1}(T_i \leq t)$
the corresponding counting process. Income from premiums comes at a constant rate $c$ and the
initial value of the free reserves is $x_0$. We assume further that free reserves accrue interest
at a fixed rate $\beta$. If we denote by $Z_t:= ct-\sum_{i=1}^{N(t)}Y_i$, $t\geq 0$, the process describing
net income (i.e.\ premium income minus liabilities due to claims),
then the free reserves process is described by the stochastic differential equation
\begin{equation} \label{riskmodel}
dX_t \;=\;  \beta X_t dt - d Z_t, \;\;\; X_0 = x_0.
\end{equation}
Along the above lines, \cite{Harrison} considered a generalization of the classical
model of collective risk theory in which the net income process of a firm, $\{Z_t\}$, has
stationary independent increments and finite variance. Then the assets of
the firm at time $t$, $X(t)$, can be represented by a simple path-wise integral with
respect to the income process $Z$ as
\begin{equation} \label{H1}
X(t) = e^ {\beta t} x_0 + \int_0^t e^ { \beta (t-s)} dZ(s), \quad t \ge 0  ,
\end{equation}
with $x_0$ positive level of initial assets and $\beta$ positive interest rate. Harrison demonstrated
that the Riemann-Stieltjes integral on the right side of (\ref{H1}) exists and is finite for all
$t \ge 0$ and almost every sample path of $Z$. Thus the process $X$ is defined as a path-wise functional
of the income process $Z(t)$.

Typically $Z(t)$ may be a L\'evy process with finite variation so that the stochastic integral in
(\ref{H1}) may be defined pathwise. A model with $Z(t)$ being Brownian motion with drift would
be natural as a diffusion approximation of such a model and this leads to the Ornstein-Uhlenbeck
model we examine in detail in this paper.

Models with compounding assets occur naturally in the study of pension funds as well. Gerber and Shiu
\cite{GS2} have studied such models involving a pair of Geometric Brownian Motion processes with
positive drift representing assets and liabilities over time and in this context ruin problems become
relevant. With the notable exception of some Geometric Brownian Motion problems, analytic solutions 
in closed form are not possible in general and thus we will study ruin problems related to these systems 
using Large Deviations techniques.

\subsection{Large Deviation Results for the Paths of the Wiener Process}

Recall that a function $f$ is {\em lower semicontinuous} iff, for ever sequence $\{x_n\}$ such that
$\lim_{n\rightarrow \infty} x_n = x$, $\liminf_{n \rightarrow \infty} f(x_n) \geq f(x)$.
A rate function $I:\mathcal{X}\rightarrow [0,\infty]$ is a lower semicontinuous function which
implies that the level sets $\Psi_I(y):= \{x\in \mathcal{X}: I(x) \leq y\}$ are
{\em closed subsets of $\mathcal{X}$}. A {\em good rate function} is one for which all the level
sets $\Psi_I(y)$ are compact subsets of $\mathcal{X}$. The effective domain of the rate function
$I$ is the subset of $\mathcal{X}$, $\mathcal{D}_I:= \{x: I(x)<\infty$ for which the rate function
is finite.
As usual, for any $\Gamma \subset \mathcal{X}$, $\bar{\Gamma}$ denotes the {\em closure} and
$\Gamma^o$ the {\em interior} of $\Gamma$.
With the above definitions one may give the following precise statement of the Large Deviation Principle (LDP):

\begin{definition}
The family of measures on $\{\mu_\epsilon\}$ satisfies an LDP with rate function $I$ if for all
$\Gamma \in \mathcal{B}$,
\begin{equation} \label{LDP}
-\inf_{x\in \Gamma^o} I(x) \leq \liminf_{\epsilon \rightarrow 0} \epsilon \log \mu_\epsilon (\Gamma) \leq
\limsup_{\epsilon \rightarrow 0} \epsilon \log \mu_\epsilon (\Gamma) \leq - \inf_{x \in \bar{\Gamma}} I(x).
\end{equation}
\end{definition}

Recall that a function $f:[0,T] \rightarrow \mathbb{R}$ is {\em absolutely continuous} if for all
$\epsilon >0$ there exists $\delta >0$ such that, for all $n\in \mathbb{N}$,
$0<s_1<t_1<s_2<t_2<\cdots<s_n<t_n< T$ such that $\sum_{i=1}^n (t_i-s_i) < \delta$
implies $\sum_{i=1}^n |f(t_i)-f(s_i)| < \epsilon$. Clearly, an absolutely continuous function
is continuous but the converse is not true. The set of all real, absolutely continuous functions
on $[0,T]$ is denoted by $\mathcal{AC}[0,T]$.

A fundamental result in sample path Large Deviations theory is the following theorem due to
Schilder \cite{Schilder}. Suppose that $\{W(t);t\in[0,1]\}$ is a Standard Brownian motion in
$\mathbb{R}$ and define a family of processes $\{W_\epsilon(t); t\in[0,1]\}$ via
$W_\epsilon(t):= \sqrt{\epsilon}\, W(t)$ where $\epsilon >0$.
\begin{theorem}[Schilder] The family of measures $\{\mu_\epsilon\}$ induced by the family of
processes $\{W_\epsilon(t);t\in[0,1]\}$ satisfies an LDP with good rate function
\[
I \;=\; \left\{ \begin{array}{cl} \displaystyle \frac{1}{2} \int_0^1 f'(s)^2 ds & \mbox{if }  f \in \mathcal{H}^1 \\
 & \\  +\infty & \mbox {otherwise} \end{array} \right.
\]
where $\mathcal{H}^1$ is the Cameron-Martin space $\{f \in \mathcal{AC}[0,T]: \; f(0)=0,\; \int_0^1 f'^2(s)ds < \infty\}$
of absolutely continuous functions with square integrable derivatives.
\end{theorem}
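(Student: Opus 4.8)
\medskip
\noindent\emph{Proof idea.} The plan is to reduce the infinite-dimensional claim to a family of finite-dimensional Gaussian large deviation statements, to lift these to path space by a projective-limit argument, and then to upgrade the topology from pointwise to uniform convergence by exploiting exponential tightness.

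First I would fix finitely many times $0<t_1<\cdots<t_k\le 1$ and observe that $(W_\epsilon(t_1),\dots,W_\epsilon(t_k))=\sqrt\epsilon\,(W(t_1),\dots,W(t_k))$ is a centred Gaussian vector with covariance $\epsilon\Sigma$, $\Sigma_{ij}=\min(t_i,t_j)$; by Cram\'er's theorem (equivalently, the elementary LDP for linearly rescaled Gaussian laws) it satisfies an LDP on $\bR^k$ with good rate function $\tfrac12\langle x,\Sigma^{-1}x\rangle$. Using the independent-increments structure to invert $\Sigma$ one obtains the explicit identity
\[
\tfrac12\langle x,\Sigma^{-1}x\rangle\;=\;\tfrac12\sum_{i=1}^{k}\frac{(x_i-x_{i-1})^2}{t_i-t_{i-1}}\;=\;\inf\Bigl\{\,\tfrac12\!\int_0^1 g'(s)^2\,ds:\ g\in\mathcal H^1,\ g(t_i)=x_i\ \text{for all }i\,\Bigr\}
\]
(with the conventions $x_0=t_0=0$; the infimum is attained by the piecewise-linear interpolant), so the finite-dimensional rate functions are precisely the marginal projections of the claimed $I$.

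Next I would move to path space. Endowing the space of real functions on $[0,1]$ that vanish at $0$ with the topology of pointwise convergence, the Dawson--G\"artner projective-limit theorem lifts the family of finite-dimensional LDPs above to an LDP for $\{W_\epsilon\}$ with rate function $\widetilde I(f)=\sup\tfrac12\sum_i(f(t_i)-f(t_{i-1}))^2/(t_i-t_{i-1})$, the supremum ranging over all finite partitions of $[0,1]$. A classical characterisation of the Cameron--Martin space identifies $\widetilde I$ with $I$: this supremum is finite precisely when $f\in\mathcal{AC}[0,1]$ with $f'\in L^2[0,1]$, in which case it equals $\tfrac12\int_0^1 f'(s)^2\,ds$; the bound $\widetilde I\le I$ is Cauchy--Schwarz on each subinterval, while $\widetilde I\ge I$ follows from an $L^2$ weak-compactness argument along a refining sequence of partitions of vanishing mesh. (Alternatively, the LDP lower bound can be obtained directly on $C_0[0,1]$ by a Cameron--Martin--Girsanov change of measure: for $h\in\mathcal H^1$ and $\rho>0$, write $\bP(\|W_\epsilon-h\|_\infty<\rho)$ as the $\bP$-expectation of the Girsanov density over the event $\{\|W_\epsilon\|_\infty<\rho\}$, apply Jensen's inequality together with the symmetry $W\mapsto-W$ to annihilate the stochastic-integral term, and use $\epsilon\log\bP(\|W_\epsilon\|_\infty<\rho)=\epsilon\log\bP(\|W\|_\infty<\rho/\sqrt\epsilon)\to 0$; this gives $\liminf_{\epsilon\to0}\epsilon\log\bP(W_\epsilon\in G)\ge-I(h)$ for every open $G$ and every $h\in G\cap\mathcal H^1$, hence the full lower bound.)

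It remains to pass from the pointwise topology to the uniform topology of $C_0[0,1]$, and the device for this is exponential tightness: for every $a>0$ one must exhibit a uniformly compact set $K_a$ --- by Arzel\`a--Ascoli, a uniformly bounded equicontinuous family --- with $\limsup_{\epsilon\to0}\epsilon\log\bP(W_\epsilon\notin K_a)\le-a$. This follows from Brownian modulus-of-continuity estimates: a dyadic chaining bound combined with the Gaussian tail estimate $\bP(|W_\epsilon(t)-W_\epsilon(s)|>\eta)\le 2\exp\bigl(-\eta^2/(2\epsilon|t-s|)\bigr)$ controls $\bP\bigl(\sup_{|t-s|\le\delta}|W_\epsilon(t)-W_\epsilon(s)|>\eta\bigr)$ at the exponential scale $\epsilon$. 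Once exponential tightness in the uniform topology is established, the general principle that an exponentially tight family satisfying an LDP with good rate function in a coarser Hausdorff topology satisfies the same LDP in the finer topology delivers the statement on $C_0[0,1]$; the rate function $I$ is good there because its sublevel sets $\{I\le y\}$ are equicontinuous and uniformly bounded, hence compact by Arzel\`a--Ascoli. I expect the exponential tightness step to be the principal obstacle: the finite-dimensional LDP and the projective limit are largely mechanical once the covariance algebra has been carried out, whereas obtaining modulus-of-continuity tails that decay at the precise scale $e^{-a/\epsilon}$ calls for a genuine chaining estimate rather than a soft compactness argument.
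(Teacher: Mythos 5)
The paper does not prove Schilder's theorem; it only states it and cites Schilder's original 1966 paper. So there is no ``paper's proof'' against which to compare your argument. That said, your sketch is a correct rendering of the standard proof: finite-dimensional Gaussian LDP for $(W_\epsilon(t_1),\dots,W_\epsilon(t_k))$ with rate $\tfrac12\langle x,\Sigma^{-1}x\rangle = \tfrac12\sum_i (x_i-x_{i-1})^2/(t_i-t_{i-1})$, lift by the Dawson--G\"artner projective-limit theorem to an LDP in the topology of pointwise convergence, identify the projective-limit rate with $\tfrac12\int_0^1 f'^2$ (Cauchy--Schwarz on subintervals for one inequality; an $L^2$ weak-compactness argument along refining partitions for the other, which is indeed the standard characterisation of the Cameron--Martin space), and then upgrade to the uniform topology via exponential tightness from a chaining/modulus-of-continuity estimate. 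Your alternative lower-bound route via the Cameron--Martin--Girsanov change of measure, with the $W\mapsto -W$ symmetry trick to remove the stochastic integral after Jensen, is also correct and is exactly the proof of the lower bound given in Dembo and Zeitouni. Two minor points: first, you do not strictly need Cram\'er's theorem for the finite-dimensional step --- the LDP for $\sqrt\epsilon\, Z$ with $Z$ a fixed Gaussian is elementary --- but as you note the laws do match an i.i.d. average at $\epsilon = 1/n$, so the invocation is legitimate; second, in the exponential-tightness step it is worth being explicit that the compact sets $K_a$ are of the form $\{f : f(0)=0,\ |f(t)-f(s)|\le C_a\,\omega(|t-s|)\}$ for a suitable modulus $\omega$ and constant $C_a\uparrow\infty$, and that the chaining bound must produce the tail $\bP(W_\epsilon\notin K_a)\le e^{-a/\epsilon}$ with the constant $a$ tending to $\infty$ as $C_a\to\infty$, which is exactly where the Gaussian tail $\exp(-\eta^2/(2\epsilon|t-s|))$ is used at the right scale. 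The argument is sound.
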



\section{Low Noise Asymptotics for the Ornstein-Uhlenbeck Process}

In this section we examine an Ornstein-Uhlenbeck (OU) process with positive infinitesimal drift
and consider the probability of hitting an upper or a lower exponential boundary. The problem is
approached using the Wentzell-Freidlin theory for obtaining logarithmic asymptotics both for the
finite and the infinite horizon problem. An OU process with an additional constant term in the
drift is also examined. Interestingly, depending on the value of the constant drift, the
variational problem from which the rate function is obtained, may or may not have a unique solution.

\subsection{The Ornstein-Uhlenbeck SDE and the time to exit from a deterministic boundary}
Consider the Ornstein-Uhlenbeck Stochastic Differential Equation (SDE)
\begin{equation} \label{OU-SDE-1}
dX_t = \mu X_t dt + \sigma dW_t, \hspace{0.2in} X_0 = x_0
\end{equation}
where $\mu >0$. Note that its expectation increases exponentially with time according to
$\mathbb{E} X_t = x_0 e^{\mu t}$, $t \geq 0$. Consider also the deterministic exponential
function given by
\begin{equation} \label{OU-SDE-2}
V(t) = v_0 e^{\beta t}  \;\;\mbox{ where } \;\; 0 \leq \beta < \mu \;\; \mbox{ and  } \;\; 0 < v_0 < x_0.
\end{equation}
Let
\begin{equation} \label{OU-SDE-4}
p(x_0,T)\;=\; \mathbb{P}(X_t > V(t); \; 0 \leq t \leq T)
\end{equation}
denote the probability that the process $\{X_t\}$ stays above the exponential boundary $V(t)$.
In this model $1-p(x_0,T)$ may be thought of as a type of \emph{ruin probability}. We are interested
in evaluating $p(x_0,T)$ and the limiting probability $p(x_0):=\lim_{T\rightarrow \infty} p(x_0,T)$
for the process given in (\ref{OU-SDE-1}) with boundary given by (\ref{OU-SDE-2}). Due to the
Markovian property of $\{X_t\}$, the ``non-ruin probability'' defined in (\ref{OU-SDE-4}) satisfies the PDE
\begin{eqnarray} \label{PDE-1}
&& \frac{1}{2} \sigma^2 f_{xx} + \mu x f_x + f_t \;=\;0, \;\; \mbox{ in }
D:=\{ (x,t): 0<t<T,\, x> v_0 e^{\beta t} \}\\ \nonumber
&& \; \mbox{ with boundary conditions $f(v_0e^{\beta t},t) = 0$
for $t \in [0,T]$ and $f(x,T) = 1$ for $x > v_0 e^{\beta T}$.}
\end{eqnarray}
We will not attempt to obtain an expression for the solution of (\ref{PDE-1}) due to
the difficulties that arise as a result of the shape of the domain $D$. One may obtain numerical
results for the ruin probability based on the above formulation. We will instead
use Wentzell-Freidlin ``low noise asymptotics'' \cite{FW} in order to obtain a large
deviations estimate for the probability that $X_t$ crosses the path of $V(t)$ for
some $t \in [0,T]$.


\subsection{The Wentzell-Freidlin Framework - Finite Horizon Problem}

Wentzell-Freidlin theory generalizes the ideas in Schilder's Theorem to the
paths of Stochastic Differential Equations.
To express the problem discussed in the previous section in the Wentzell-Freidlin
framework we consider the family  of processes $\{X_t^\epsilon\}$
\begin{eqnarray}   \label{SDEep1}
dX_t^\epsilon &=& \mu X_t^\epsilon dt + \sqrt{\epsilon} \, \sigma \, dW_t,
\hspace{0.15in} X_0^\epsilon = x_0
\end{eqnarray}
together with the deterministic process
\begin{eqnarray*}
\dot{x}(t) &=& \mu x(t)  , \hspace{0.15in} x(0)= x_0.
\end{eqnarray*}
Denote by $C[0,T]$ the set of continuous functions on $[0,T]$, and by $C_{x_0}[0,T]$
the set of all continuous functions $f:[0,T]\rightarrow \mathbb{R}$ with $f(0)=x_0$.
Consider the transformation $F: C[0,T] \rightarrow C_{x_0}[0,T]$ defined by
\begin{equation} \label{f-g}
f \;=\; F(g) \hspace{0.1in} \mbox{ with } \hspace{0.1in} f(t)
\;:=\; \int_0^t \mu f(s) ds \;+\; \sigma g(t), \hspace{0.1in} t \in[0,T].
\end{equation}
Let $f_i$, denote the solution of (\ref{f-g}) when the driving function is $g_i$, $i=1,2$.
We may then establish the continuity of the map $F$ by means of a Gronwall argument
which shows that
\[
\Vert f_1 -f_2 \Vert \;\leq\;  \sigma \, e^{\mu T} \Vert g_1 -g_2 \Vert
\]
where $\Vert f \Vert:=\sup \{ \vert f(t) \vert: t\in [0,T]\}$ denotes the sup norm.
Theorem 5.6.7 of \cite[p. 214]{DZ} applies and therefore the solution of (\ref{SDEep1})
satisfies a Large Deviation Principle with good rate function
\begin{equation} \label{Action}
I(f,T) \; := \;  \left\{ \begin{array}{cll}\displaystyle \frac{1}{2}
\,\int_0^T \left( f'(t)-\mu f(t)\right)^2 \sigma^{-2} dt & \; &\mbox{ if }  f \in {\cal H}^1_{x_0} \\
 & & \\ +\infty & & \mbox{ otherwise }  \end{array} \right.
\end{equation}
where ${\cal H}_{x_0}^1(T) := \{f:[0,T]\rightarrow \mathbb{R}\, , \; f(t) = x_0 + \int_0^t \phi(s)ds \;,
\; t\in[0,T], \phi\in L^2[0,T] \}$ is the Cameron-Martin space
of absolutely continuous functions with square integrable derivative and initial value $f(0)=x_0$.

\begin{theorem} \label{th:OU-1} In the above framework, if the lower boundary curve
is $V(t)=v_0e^{\beta t}$,
\begin{equation} \label{th1-1}
\lim_{\epsilon \rightarrow 0} \epsilon \, \log \mathbb{P}
\left(\min_{t\in[0,T]}X_t^\epsilon - V(t) \leq 0 \right) \;=\; - I_V(T). 
\end{equation}
The rate function $ I_V(T)$ is given by
\begin{equation} \label{OU-I}
I_V(T) \;=\; \frac{ \mu }{\sigma^2} \, \frac{\left( v_0 e^{\beta (T\wedge t_V^o)} - x_0 e^{\mu (T\wedge t_V^o)} \right)^2}{e^{2\mu (T\wedge t_V^o)} - 1}
\end{equation}
where $t^o_V$ is the unique positive solution of the equation
\begin{equation} \label{topt-v}
\phi_V(t) \; :=\; \left(1-\frac{\beta}{\mu}\right) e^{(\mu+\beta)t} + \frac{\beta}{\mu} e^{(\beta-\mu)t}
\;=\; \frac{x_0}{v_0}.
\end{equation}
Similarly, for the upper boundary curve $U(t)=u_0e^{\alpha t}$,
\begin{equation} \label{th1-2}
\lim_{\epsilon \rightarrow 0} \epsilon \, \log \mathbb{P}\left(\max_{t\in[0,T]}X_t^\epsilon - U(t) \geq 0 \right)
\;=\; - I_U(T) 
\end{equation}
with
\begin{equation} \label{OU-IU}
I_U(T) \;=\;   \frac{ \mu }{\sigma^2} \,  \frac{\left( u_0 e^{\alpha (T\wedge t_U^o)} - x_0 e^{\mu (T\wedge t_U^o)} \right)^2}{e^{2\mu (T\wedge t_U^o)} - 1}
\end{equation}
where $t^o_U$ is the unique positive solution of the equation
\begin{equation} \label{topt-u}
\phi_U(t) \; :=\; \frac{\alpha}{\mu} e^{(\alpha-\mu)t} - \left(\frac{\alpha}{\mu} - 1\right) e^{(\mu+\alpha)t}
 \;=\; \frac{x_0}{u_0}.
\end{equation}
\end{theorem}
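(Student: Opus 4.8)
The plan is to run everything through the sample-path LDP recorded in (\ref{Action}): the laws of $\{X^\epsilon_\cdot\}$ on $C_{x_0}[0,T]$ obey an LDP with the good rate function $I(f,T)=\frac{1}{2\sigma^2}\int_0^T(f'(t)-\mu f(t))^2\,dt$ on $\mathcal{H}^1_{x_0}(T)$ (and $+\infty$ off it). The event in (\ref{th1-1}) is $\{X^\epsilon_\cdot\in A\}$ with $A:=\{f\in C_{x_0}[0,T]:\min_{t\in[0,T]}(f(t)-V(t))\le0\}$, which is closed, with interior $A^\circ=\{f:\min_t(f(t)-V(t))<0\}$. The LDP then yields $-\inf_{A^\circ}I\le\liminf_\epsilon\epsilon\log\mathbb P(X^\epsilon_\cdot\in A)\le\limsup_\epsilon\epsilon\log\mathbb P(X^\epsilon_\cdot\in A)\le-\inf_{\bar A}I=-\inf_AI$, so it suffices to show $\inf_{A^\circ}I=\inf_AI$ and to evaluate this common value, which will turn out to be $I_V(T)$. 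Proving $\inf_{A^\circ}I=\inf_AI$ — i.e.\ that merely touching $V$ is, in action terms, no cheaper than strictly crossing it — is the one point needing genuine care: given a near-minimiser $f\in A$ one perturbs it into a path that dips strictly below $V$ at an interior time at arbitrarily small extra cost, which follows from the continuity, in the terminal value, of the explicit minimal action computed in the next step (replace the endpoint value $V(\tau)$ by $V(\tau)-\kappa$ and let $\kappa\downarrow0$). I expect this reconciliation, together with the first-passage reduction below, to be the main obstacle; the rest is ODE theory and one-variable calculus.

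Next I would reduce the path-space infimum to a one-parameter family of fixed-endpoint problems. Since $f(0)=x_0>v_0=V(0)$, every $f\in A$ has a first time $\tau=\tau(f)\in(0,T]$ with $f(\tau)=V(\tau)$; restricting the nonnegative action integral to $[0,\tau]$ gives $I(f,T)\ge J(\tau)$, where $J(\tau):=\inf\{\frac{1}{2\sigma^2}\int_0^\tau(g'-\mu g)^2\,dt:\ g(0)=x_0,\ g(\tau)=V(\tau)\}$; conversely, the $J(T\wedge t_V^o)$-minimiser on $[0,T\wedge t_V^o]$, continued by the zero-cost flow $\dot g=\mu g$ on $[T\wedge t_V^o,T]$, lies in $A$ with action exactly $\inf_{0\le\tau\le T}J(\tau)$. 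Hence $I_V(T)=\inf_{0\le\tau\le T}J(\tau)$. To evaluate $J(\tau)$, the Euler--Lagrange equation for the Lagrangian $(g'-\mu g)^2$ reduces to $g''=\mu^2g$, so the extremal is $g(t)=Ae^{\mu t}+Be^{-\mu t}$ with $A,B$ fixed by $g(0)=x_0$, $g(\tau)=v_0e^{\beta\tau}$; since $g'-\mu g=-2\mu Be^{-\mu t}$ along such $g$, one gets $J(\tau)=\frac{\mu}{\sigma^2}B^2(1-e^{-2\mu\tau})$ with $B=(x_0e^{\mu\tau}-v_0e^{\beta\tau})/(e^{\mu\tau}-e^{-\mu\tau})$, which is algebraically exactly the right-hand side of (\ref{OU-I}) with $\tau$ in place of $T\wedge t_V^o$.

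It remains to minimise $\tau\mapsto J(\tau)$ over $[0,T]$. Differentiating and factoring out $v_0e^{\beta\tau}-x_0e^{\mu\tau}$ (negative for every $\tau\ge0$, since $x_0>v_0$ and $\mu>\beta$) and dividing by $\mu v_0e^{\mu\tau}$ shows $\sgn J'(\tau)=\sgn\!\big(\phi_V(\tau)-x_0/v_0\big)$, with $\phi_V$ as in (\ref{topt-v}); in particular the stationarity condition is precisely $\phi_V(\tau)=x_0/v_0$. One then checks $\phi_V(0)=1<x_0/v_0$, $\phi_V(\tau)\to\infty$ as $\tau\to\infty$, and $\phi_V'(\tau)=\frac{\mu-\beta}{\mu}\big((\mu+\beta)e^{(\mu+\beta)\tau}-\beta e^{(\beta-\mu)\tau}\big)\ge\mu-\beta>0$ for $\tau\ge0$, so $\phi_V$ is strictly increasing and (\ref{topt-v}) has a unique positive root $t_V^o$. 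Consequently $J$ strictly decreases on $(0,t_V^o)$ and strictly increases on $(t_V^o,\infty)$, its minimiser over $[0,T]$ is $T\wedge t_V^o$, and $I_V(T)=J(T\wedge t_V^o)$, which is (\ref{OU-I}).

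Finally, (\ref{th1-2})--(\ref{topt-u}) is proved by the identical argument with $(v_0,\beta)$ replaced by $(u_0,\alpha)$ and the standing inequalities $x_0<u_0$, $\mu<\alpha$ in force. Here one uses that $u_0e^{\alpha\tau}-x_0e^{\mu\tau}>0$ for every $\tau\ge0$; the same Euler--Lagrange computation gives $I_U(T)=\inf_{0\le\tau\le T}J_U(\tau)$, with $J_U(\tau)$ the right-hand side of (\ref{OU-IU}) (with $\tau$ for $T\wedge t_U^o$), and $\sgn J_U'(\tau)=-\sgn\!\big(\phi_U(\tau)-x_0/u_0\big)$, so the stationarity condition is (\ref{topt-u}). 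Since $\phi_U(0)=1>x_0/u_0$, $\phi_U(\tau)\to-\infty$, and $\phi_U'(\tau)=\frac{\alpha-\mu}{\mu}\big(\alpha e^{(\alpha-\mu)\tau}-(\mu+\alpha)e^{(\mu+\alpha)\tau}\big)<0$ for $\tau\ge0$, $\phi_U$ is strictly decreasing with a unique positive root $t_U^o$, $J_U$ decreases then increases, and $I_U(T)=J_U(T\wedge t_U^o)$.
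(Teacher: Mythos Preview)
Your proof is correct and, in its computational core (the Euler--Lagrange ODE $g''=\mu^2 g$, the closed form for $J(\tau)$, the sign analysis of $J'$ via $\phi_V$ and $\phi_U$, and the monotonicity of these functions), it matches the paper essentially line for line. Parts 1 and 2 of the paper's argument are your middle two paragraphs.

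Where you genuinely diverge is in the treatment of the path inequality constraint. The paper, in its Part 3 and in a substantial Appendix (two propositions and a three-case analysis governed by auxiliary times $t_1<t_V^o<t_2$), works to show that the unconstrained Euler--Lagrange extremal from $x_0$ to $V(t)$ actually stays above $V$ on $[0,t)$ whenever $t\le t_2$, and that $t_2>t_V^o$, so that the constrained and unconstrained minima $J_{**}(t)$ and $J_*(t)$ coincide on the relevant range. You bypass all of this with the first-passage reduction: for the lower bound on $\inf_A I$ you restrict the action integral to $[0,\tau(f)]$ (so no constraint is needed --- you are only bounding below by the \emph{unconstrained} $J(\tau)$); for the matching upper bound you take the unconstrained minimiser on $[0,\tau^*]$ and continue it by the free flow $\dot g=\mu g$, which lands in $A$ automatically because it touches $V$ at $\tau^*$. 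This is considerably more economical and is a real simplification over the paper's route. You are also more careful than the paper about the LDP bookkeeping (identifying $A$ as closed, $A^\circ$, and arguing $\inf_{A^\circ}I=\inf_A I$ by the endpoint-continuity perturbation), points on which the paper is essentially silent.

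What the paper's longer route buys is some extra geometric information you do not obtain: it identifies, for each fixed terminal time $t$, exactly when the extremal bridge violates the boundary (namely for $t>t_2$), and it describes the second intersection point $\tau(t)$. None of that is needed for the theorem as stated, so for the purpose at hand your argument is both correct and sharper.
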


\begin{proof}
The proof is long and will be divided into three parts for clarity of exposition.

\paragraph{Part 1.}
We begin by fixing $t>0$ and considering paths that start at $x_0$ at time $0$ and end at
$V(t):=v_0e^{\beta t}$ at time $t$: Consider the set
\[
\mathcal{H}_{x_0,V(t)}^1 \; := \;
\left\{h:[0,t]\rightarrow \mathbb{R} : h(s) = x_0 + \int_0^s \phi(u)du \;,
\; s\in[0,t],\; h(t)=V(t), \phi\in L^2[0,t] \right\}.
\]
Then, for $\eta >0$,
\begin{equation} \label{J_understar_1}
\lim_{\epsilon \rightarrow 0} \epsilon
\log \mathbb{P}\left( \sup_{0\leq s \leq t} \left| X_s^\epsilon - h(s) \right| < \eta \right) \;=\; - J_*(t).
\end{equation}
where $J_*(t)$  is the solution of the variational problem
\begin{equation} \label{Jstar}
J_*(t) := \inf\left\{ J(x;t): x \in \mathcal{H}_{x_0,V(t)}^1 \right\}
\end{equation}
with
\begin{equation} \label{rate-g}
J(x;t) = \int_0^t F(x,x',s) ds, \hspace{0.2in} 
\mbox{ and } \;\; F(x,x',s) = \frac{1}{2\sigma^2} \left( x' - \mu x \right)^2.
\end{equation}
$J(x;t)$ gives the rate function for a path $x(\cdot)$ that starts at $x_0$ and meets the lower boundary
at the point $(t,v_0 e^{\beta t})$ i.e.\ satisfies the boundary conditions
\begin{equation} \label{boundary-g}
x(0) = x_0, \hspace{0.3in} x(t) = v_0 e^{\beta t}.
\end{equation}
The infimum in (\ref{Jstar}) is taken over all absolutely continuous functions on $[0,t]$
with derivative in $L^2$. The function $x\in {\cal H}_{x_0,V_t)}^1[0,t]$ that minimizes
the integral defining the rate function is the solution of the Euler-Lagrange equation
(e.g.\ see \cite{Pinch}, \cite{B-M})
\begin{equation} \label{Euler}
F_x - \frac{d}{ds} F_{x'} =0
\end{equation}
and the boundary conditions (\ref{boundary-g}).
With the given form of $F$ in (\ref{rate-g}) the Euler-Lagrange equation becomes
\begin{equation} \label{ODE}
x''(s) = \mu^2 x(s)
\end{equation}
which has the general solution
\begin{equation} \label{ch2:gensol1}
x(s) = c_1 e^{\mu s} + c_2 e^{-\mu s}.
\end{equation}
The values of $c_1$, $c_2$ for which $x$ satisfies the boundary conditions are given by
\begin{equation} \label{ch2:gensol2}
c_1 = \frac{ v_0 e^{\beta t} - x_0 e^{-\mu t}}{e^{\mu t} - e^{-\mu t}}, \hspace{0.3in}
c_2 = \frac{ x_0 e^{\mu t} - v_0 e^{\beta t}}{e^{\mu t} - e^{-\mu t}}.
\end{equation}
Thus (\ref{ch2:gensol1}) with the constants $c_1,c_2$ given by (\ref{ch2:gensol2}) gives the optimal path
\begin{eqnarray}  \label{optimal-path-finite} 
x(s) &=&  \frac{v_0 e^{\beta t} \left(e^{\mu s}- e^{-\mu s} \right)
  + x_0 \left(e^{\mu (t-s)}- e^{-\mu (t-s)}\right)}{e^{\mu t} - e^{-\mu t}}  
 \;=\; \frac{v_0 e^{\beta t} \sinh(\mu s) + x_0 \sinh(\mu(t-s))}{\sinh(\mu t)}  . \;\;\;\;\;\; 
\end{eqnarray}
From (\ref{ch2:gensol1}) $x'(s)-\mu x(s) = -2\mu c_2 e^{-\mu s}$ and, taking into account (\ref{rate-g}),
\[
J_*(t) \;=\; \frac{4 \mu^2 c_2^2}{2 \sigma^2} \,  \int_0^t e^{-2\mu s}ds \;=\;  
\frac{\mu c_2^2}{\sigma^2} \left(1-e^{-2\mu t} \right).
\]
Using the expression for $c_2$ we have
\begin{equation} \label{rate2}
J_*(t) \;=\; \frac{ \mu}{\sigma^2}\,  \frac{\left( v_0 e^{\beta t} - x_0 e^{\mu t} \right)^2}{e^{2\mu t} - 1}.
\end{equation}

There remains to show that there is no path $x(s)$ with piecewise continuous derivative which
achieves a smaller value of the criterion, i.e.\ that the optimal solution does not have
corners. To this end we consider the Erdeman corner conditions \cite[p.33]{B-M}.
The first condition requires that $F_{x'}$ evaluated at the critical path be a continuous
function of $s$. Since $F_{x'} = \frac{1}{\sigma^2}\left(x'-\mu x\right)$ and $x(s)$ is
necessarily continuous, the first Erdeman condition implies the continuity of $x'(s)$ as well.
Therefore, by virtue of the first Erdeman condition alone we may conclude that the optimal
solution cannot have discontinuities in its derivative. For the sake of completeness we
mention that the second Erdeman condition requires that $F - x'F_{x'}$ evaluated at the
critical path be also a continuous function of $u$. Since
$F - x'F_{x'} = -\frac{1}{2\sigma^2}\left((x')^2-\mu^2 x^2\right)$ and because of the
continuity of $x(s)$, this second condition by itself would allow the existence of corners
at which the first derivative changes sign. (Such corners are of course precluded by the
first condition.)

The solution we have found corresponds to a global minimum.
To see this we appeal to Theorem 3.16 of \cite[p.45]{B-M}  
according to which it suffices to show that $F(x,x'):= \frac{1}{2\sigma^2}(x'-\mu x)^2$
(abusing slightly the notation)
is a convex on $\mathbb{R}^2$. Indeed, we can show that, for any $(x_0,x_0') \in \mathbb{R}^2$,
\[
F(x,x') \geq F(x_0,x'_0) + F_x(x_0,x_0') \, (x-x_0) + F_{x'}(x_0,x_0') \, (x'-x'_0)
\]
or
\[
\frac{1}{2}\left(x'-\mu x\right)^2 \geq \frac{1}{2}\left(x_0'-\mu x_0\right)^2
- \mu\left(x_0'-\mu x_0\right)\, (x-x_0) + \left(x_0'-\mu x_0\right)\, (x'-x'_0) .
\]
This last inequality can be seen to be equivalent to
\[
\left(x'-\mu x\right)^2 + \left(x_0'-\mu x_0 \right)^2 \;
-\; 2\left(x'-\mu x\right)\left(x_0'-\mu x_0 \right) \geq 0
\]
which is clearly true and thus the convexity of $F$ and therefore the global
optimality of $x$ is established.

\paragraph{Part 2.}
In the first part we obtained the {\em fixed time optimal solution} under the boundary conditions 
(\ref{boundary-g}). 
In this part however we will solve the optimization problem
\begin{equation} \label{part2-IT}
I(T) := \inf\{J(x,t): 0\leq t \leq T, x\in \mathcal{H}^1_{x_0,V(t)},
\mbox{ i.e. $x$ satisfies the conditions (\ref{boundary-g}) } \}
\end{equation}
with finite time horizon $t \in [0,T]$, still ignoring the inequality path constraints (\ref{ipc}).
Clearly $I(T) \;=\; \inf_{t \in [0,T]} J_*(t)$.
From (\ref{rate2}) we see that $J_*(t)$ is a continuously differentiable function for $t>0$. 
We will establish that it is strictly convex on $[0,T]$. Indeed
\begin{equation} \label{J*derivative}
J_*'(t) \;=\;  \frac{2v_0\mu^2e^{\mu t}\left(x_0e^{\mu t}-v_0e^{\beta t}\right)}{\sigma^2 (e^{2\mu t} -1)^2} \,
\left[  \left( 1 - \frac{\beta}{\mu} \right) e^{(\beta+\mu)t} 
+  \frac{\beta}{\mu}  e^{(\beta-\mu)t} - \frac{x_0}{v_0} \right].
\end{equation}
Given the definition of $\phi_V$ in (\ref{topt-v}) we note that the quantity inside the brackets above is 
$\phi_V(t) -  \frac{x_0}{v_0}$ 
Since $0<\beta<\mu$ and $0<v_0<x_0$, $x_0e^{\mu t}-v_0e^{\beta t} >0$ for all $t\geq 0$ 
and thus the sign of $J_*'(t)$ is that of $\phi_1(t)- \frac{x_0}{v_0}$. 
Note that $\phi'_V(t) = \frac{\mu-\beta}{\mu} e^{(\beta + \mu)t} \left[ \mu + \beta(1-e^{-2\mu t})\right]>0$ 
for all $t\geq 0$ and thus $\phi_1$ is strictly increasing. Also, given the definition of $\phi_V$ we have 
$\lim_{t\rightarrow \infty}\phi_V(t) = +\infty$, $\phi_V(0)=1$, and $\frac{x_0}{v_0} >1$, 
hence there exists a unique $t^o_V>0$ such that
\begin{equation} \label{eq-phi1}
\phi_V(t^o_V) \;=\; \frac{x_0}{v_0} >1 .
\end{equation}
In view of the expression (\ref{J*derivative}), 
$J_*'(t) < 0$ for $0\leq t< t^o_V$, $J_*(t^o_V) =0$ and $J_*'(t)>0$ for $t>t^o_V$. Thus
$t^o_V$, the unique solution of (\ref{topt-v}), is a point of global minimum for $J_*$. 
\begin{figure}[h!]
\begin{center}
\includegraphics[width=3.7in]{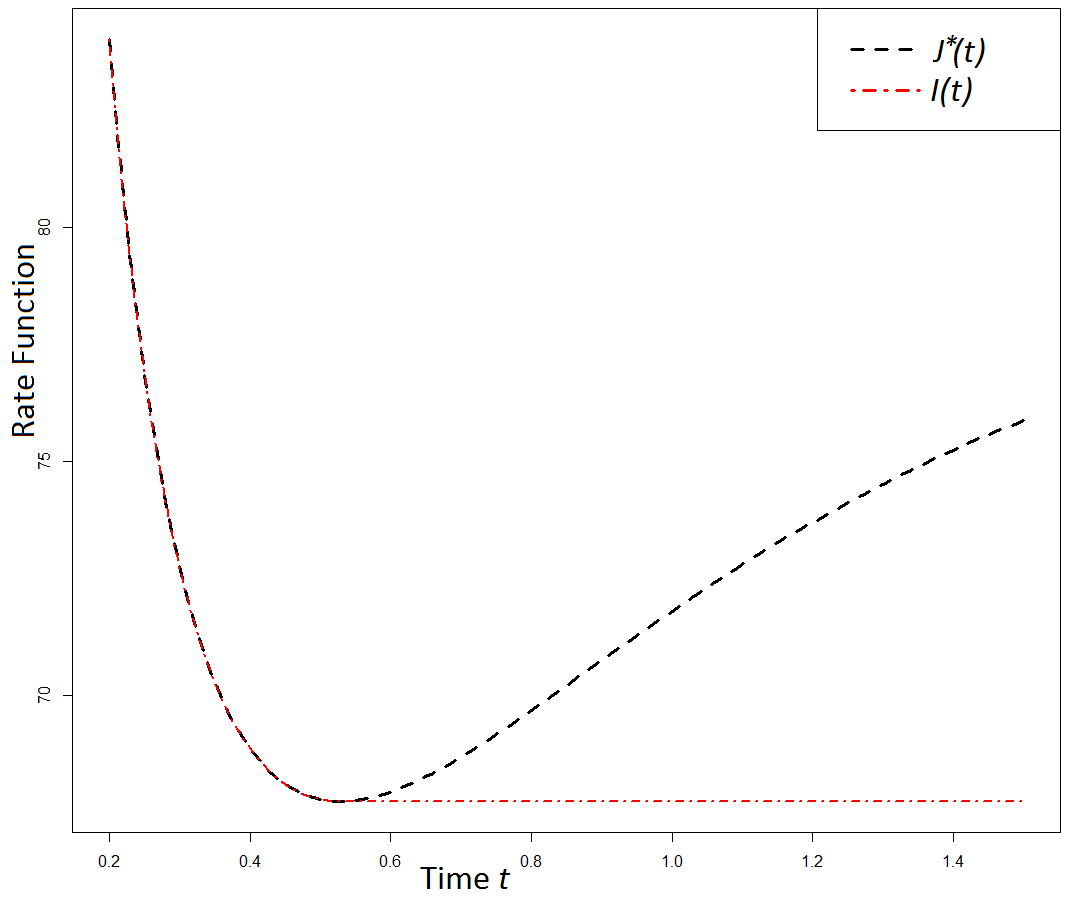}
\caption{The dotted black line denotes the function $J_*(t)$. The dotted red line denotes the rate function $I(t)$.
Here $\mu=2.5$, $\beta=1.0$, $x_0=4$, $v_0=1$ and $t^o_V \approxeq 0.529$.}
\label{fig:rate-1}
\end{center}
\end{figure}
Then 
\begin{equation} \label{IT-part2}
I(T)\;=\; \inf_{t \in [0,T]} J_*(t) \;=\; 
\left\{ \begin{array}{ccc} J_*(T) & \mbox{ if } & T\leq t^o_V \\ J_*(t^o) & \mbox{ if } & T > t^o_V \end{array} \right. \;.
\end{equation}
Figure \ref{fig:rate-1} illustrates the behavior of the function $J_*(t)$ and that of $I(t)$.


\paragraph{Part 3.}
We complete the proof by showing that the optimal rate given by (\ref{IT-part2})
remains valid even after taking into account the additional 
{\em path inequality constraint}
\begin{equation} \label{ipc}
x(s) \geq V(s) \;\;\; \mbox{ for all } s \in [0,t].
\end{equation}
Define
\begin{equation} \label{Jstarstar}
J_{**}(t) := \inf\left\{ J(x;t): x \in \mathcal{H}_{x_0,V(t)}^1, x(s) \geq V(s) \mbox{ for } s \in [0,t]. \right\}
\end{equation}
Consider the optimal path $x(s)$ of Part 1 given in (\ref{ch2:gensol2}) , (\ref{ch2:gensol1}),
(\ref{optimal-path-finite}), for all $s \geq 0$.
Note that $c_2>0$ (since $\mu >\beta$ and $x_0 > v_0$). The sign of $c_1$ depends on $t$:  
$c_1>0 \Leftrightarrow v_0 e^{(\mu+\beta)t} -x_0 >0 $ and this is equivalent to 
\begin{equation} \label{inequality-condition-1}
 t > t_1:= \frac{1}{\mu+\beta} \, \log\frac{x_0}{v_0}.
\end{equation}
We also point out that 
\begin{equation} \label{t1to}
t_1 < t^o_V.
\end{equation}
This follows by the fact that $\phi_V$ is a strictly increasing function and
\[
\phi_V(t_1) \;=\; \left( 1 - \frac{\beta}{\mu} \right) e^{(\beta+\mu)t_1} 
+  e^{-2\mu t}\frac{\beta}{\mu}  e^{(\beta+\mu)t_1} \;=\; 
\frac{x_0}{v_0}  \left(  1 - \frac{\beta}{\mu}( 1-   e^{-2\mu t} ) \right) \;<\;  \frac{x_0}{v_0} = \phi_V(t^o_V).
\]

We distinguish three cases according to the relationship between $t$ and $t_1$.

\noindent
\underline{\em Case 1: $t <t_1$.} This implies that $c_1<0$. Because $x(0)>V(0)$,  
$x'(s) = \mu c_1 e^{\mu s} - \mu c_2 e^{-\mu s}<0$ for all $s\geq 0$, and 
$\lim_{s \rightarrow \infty} x(s) = -\infty$, $t$ is the unique intersection point of the 
paths $x(\cdot)$ and $V(\cdot)$ and the inequality constraint (\ref{ipc}) is satisfied.


\noindent
\underline{\em Case 2: $t = t_1$.} Then, from (\ref{ch2:gensol2}) $c_1=0$ and $c_2=x_0$ and hence 
$x(s) = x_0 e^{-\mu s}$. Again, the paths $x(\cdot)$ and $V(\cdot)$ intersect only once, at $t$, $x(s) > V(s)$ 
for $s\in[0,t)$, and the path inequality constraint is satisfied. 

\noindent
\underline{\em Case 3: $t > t_1$.} Here both $c_1>0$ and $c_2>0$ and thus $x(s)>0$ for all $s>0$. Therefore, 
as a result of (\ref{ODE}), $x''(s)>0$ and the function $x$ is strictly convex for all $s \geq 0$. In this case, as 
is shown in the Appendix, the paths $x(\cdot)$ and $V(\cdot)$ intersect at precisely two points, one of which is of course 
$t$ while the other will be denoted by $\tau(t)$. 
\begin{figure}[h!]
\begin{center}
\includegraphics[width=6.8in]{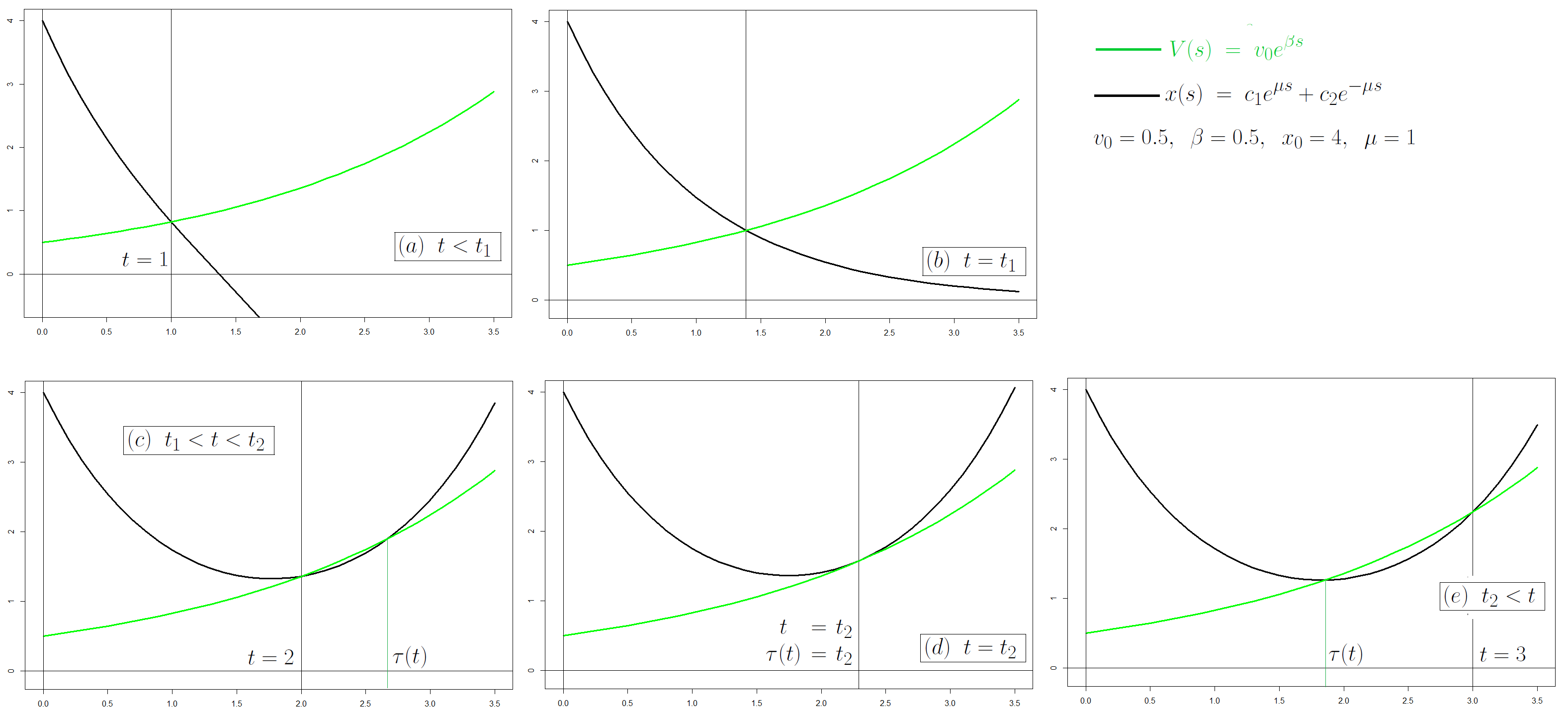}
\caption{The Three cases. In this example $\mu=1$, $\beta=0.5$, $x_0=4$, $v_0=0.5$. These give 
$t_1 \approx 1.39$ and $t_2 \approx 2.29$. (a) shows the behavior when $t=1<t_1$. This is case 1 and
$x(\cdot)$ decreases monotonically. The inequality constraints are satisfied. In (b) $t=t_1$. This is case
2 and again the inequality conditions are satisfied. The remaining tree plots illustrate case 3. In (c) and
(d) $t \leq t_2$ and $\tau(t) \geq t$. Again the inequality constraints are satisfied. In (e) however,
when $t>t_2$, $\tau(t)<t$ and (\ref{ipc}) is not satisfied.}
\label{fig:hitting-examples}
\end{center}
\end{figure}
Figure \ref{fig:hitting-examples} shows that for specific values of the parameters $\mu$, $\beta$, $x_0$, $v_0$. For the values of the
parameters in Figure \ref{fig:hitting-examples} $t_0 = \frac{1}{2}\log 8 \approx 1.04$. Hence in the figure in the left the path $x(s)$
is decreasing and eventually becomes negative. There is a single intersection between the curves $x(s)$ and $V(s)$. On the other hand in
the figure in the middle ($t=2$) and in the right ($t=3$) the path $x(s)$ is strictly convex, as is $V(s)$, and thus the two curves intersect
in two points. For $t=2$ the path $x(s)$ satisfies (\ref{Derivative-Condition}) and therefore (\ref{ipc-d}) and (\ref{ipc}) while for
$t=3$ it does not.

The key remark is the following: If $x'(t) < V'(t)$ then the path $x(\cdot)$ intersects $V(\cdot)$ from above at $t$,
then again from below at $\tau(t)>t$. If, conversely, $x'(t)> V'(t)$ then $x(\cdot)$ intersects $V(\cdot)$ from 
below at $t$. Since $x(0) > V(0)$ this necessarily implies that there was an earlier crossing from above at $\tau(t) < t$. 
(The case $x'(t) = V'(t)$ corresponds to $t=\tau(t)$. The path $x(\cdot)$ is tangent to $V(\cdot)$ at $t$ and 
$x(s) > V(s)$ for all $s \ne t$.)

The situation in Case 3 is examined in more detailed in Section \ref{sec:app-t2} of the Appendix where it is established
that there exists a time $t_2$ such that $t_1<t_2$ and the relationship between $t$ and $t_2$ determines whether 
the path $x(\cdot)$ satisfies the inequality constraints (\ref{ipc}) or not. Specifically
\begin{itemize}
\item[] If $t_1<t<t_2$ then $x(\cdot)$ intersects $V(\cdot)$ from above at $t$ and
hence it satisfies the inequality constraint $x(s) > V(s)$ for $s \in [0,t)$. It crosses $V(t)$ once again at $\tau(t) >t$,
this time from below. 
\item[] If $t=t_2$ then $x(t)$ is tangent to $V(t)$ at $t$. It satisfies the inequality constraint $x(s) > V(s)$ for $s\in [0,t)$ (and
in fact even beyond $t$ though this is of no interest for our purposes). 
\item[] 
If $t>t_2$ then $x(t)$ crosses $V(t)$ from below. This means that there was a first crossing from above at $\tau(t)<t$. 
As a result $x(s) < V(s)$ when $s \in (\tau(t),t]$ and the inequality constraint (\ref{ipc}) is not satisfied in this case.
\end{itemize}

\begin{figure}[h!]
\begin{center}
\includegraphics[width=5.5in]{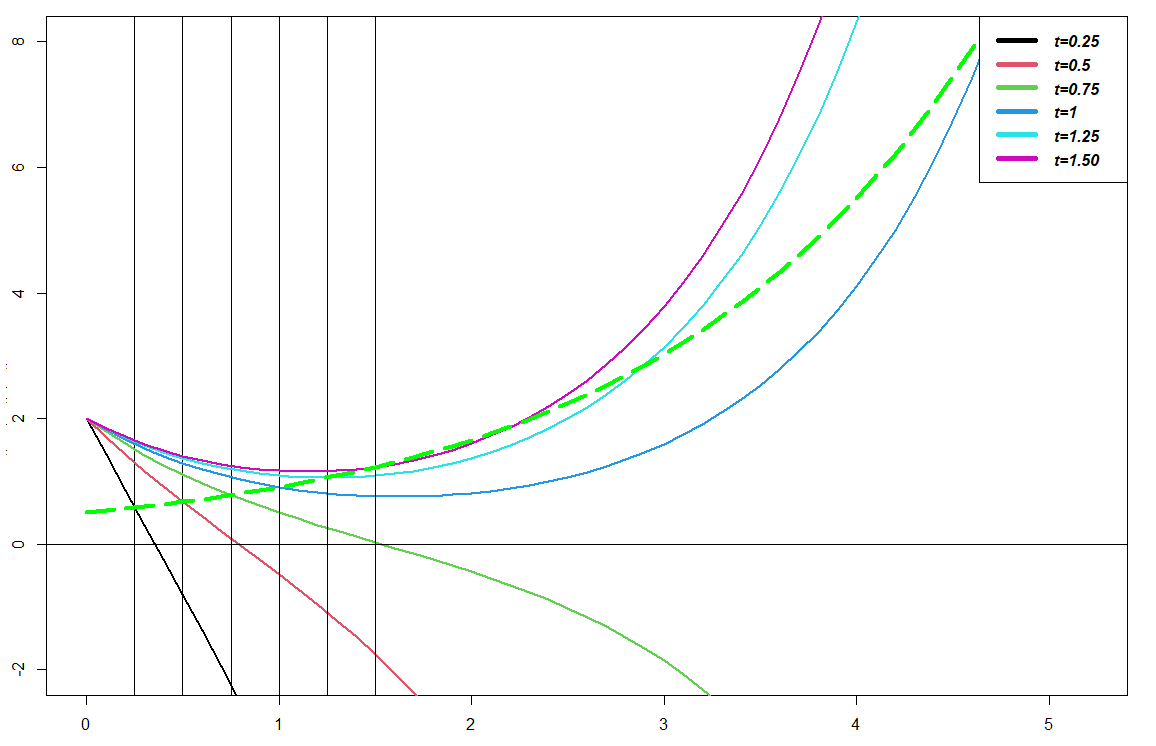}
\caption{Here $\mu=1$, $\beta=0.6$, $x_0=2$, $v_0=0.5$. Thus $t^o=0.8664$. The hitting times range from $t=0.25$ to $t=1.50$.
Note that, for $t=0.25, \, 0.50,$ and $0.75$ the path $x(s)$ eventually becomes negative, after hitting $V(s)$,
the thick green line.  When the hitting times are greater than $t^o$, (i.e.\ $t=1,\,1.25,\,$ and $1.5$) the path $x(s)$ is a convex function
and has two intersection points with the dotted green line.}
\label{fig:multi-hitting-times-2}
\end{center}
\end{figure}
Figure \ref{fig:multi-hitting-times-2} illustrates these cases. For $t=0.25$, $0.5$,
and $0.75$ (black, red, and green paths) the paths eventually become negative and intersect the dotted green line (i.e.\ the function $V(\cdot)$) once.
In the rest of the cases the paths remain positive and intersect the dotted green line twice.

Thus the optimal path of Part 1 also satisfies the constraint (\ref{ipc}) iff $t\leq t_2$. In that case 
the path given by (\ref{optimal-path-finite}) minimizes the functional $J(x,t)$ in (\ref{rate-g}) under
the boundary conditions (\ref{boundary-g}) and the path inequality constraints (\ref{ipc}). Then
\begin{equation} \label{t2holds}
J_{**}(t) = J_*(t) \hspace{0.1in} \mbox{ when } \; t<t_2.
\end{equation}


If $t>t_2$ then $t$ is {\em the second point of intersection of $x(s)$ with $V(s)$} and (\ref{ipc}) is not satisfied. 
This means that the path $x(s)$ is not feasible under the additional constraint $x(s)>V(s)$ and therefore that the optimal value $J_*(t)$ obtained
without taking into account the inequality constraint is smaller than $J_{**}(t)$.
Thus we have
\begin{equation} \label{rr3}
\begin{array}{ccc} J_{**}(t)  &=&  J_*(t) \;\mbox{ if } t\leq t_2 \\ J_{**}(t) &<&  J_*(t) \;\mbox{ if } t > t_2 \end{array}
\end{equation}


 


Then, the rate function in (\ref{th1-1}), defined as
\begin{equation}  \label{IV}
I_{V}(T) \; := \; \inf\left\{ J(x;t): x \in \mathcal{H}_{x_0,V(t)}^1, x(s) \geq V(s) \mbox{ for } 0 < s <t, \;\; 0 < t \leq T. \right\}
\end{equation}
can be obtained as
\begin{equation} \label{IVmin}
I_{V}(T) \; := \; \min_{t \in (0,T]} J_{**}(t).
\end{equation}
If $T\leq t_2$ then $ J_{**}(t) = J_*(t)$ and hence $I_V(T) = \min_{t\in (0,T]} J_*(t) = J_*(t^o\wedge T)$ 
due to the fact that $J_*$ is strictly decreasing in $(0,t^o_V)$ and strictly increasing in $(t^o_V,\infty)$.

Therefore we conclude that $I_V(T)$ is also given by (\ref{rate2}). This concludes the proof of the first part of 
Theorem \ref{th:OU-1}. The proof of the second part, pertaining to the upper boundary curve, is similar and will be omitted.
\end{proof}



\subsection{The infinite horizon problem -- lower and upper bound} \label{sec:ihp-lub}

We now turn to the infinite horizon problem of obtaining a large deviations estimate for 
the probability $\bP(\inf_{t\geq 0} X_t - v_0 e^{\beta t} \leq 0 )$ and 
$\bP(\inf_{t\geq 0} X_t - u_0 e^{\alpha t} \geq 0 )$ in the same context as 
that of the previous section. 
It is of course possible to solve first the corresponding finite horizon problem 
as we saw in the previous section and then minimize this probability over $T$. Instead of this,
we will use here the standard {\em transversality conditions} approach of the Calculus of Variations 
in order to tackle in one step the infinite horizon problem. These are necessary conditions for optimality 
in variational problems with variable end-points.

\begin{theorem}
Suppose $\{X_t^\epsilon\}$, $\epsilon>0$, is the family of diffusions described by the solution 
of the SDE (\ref{SDEep1}). Suppose also that the upper bounding curve $U(t)=u_0e^{\alpha t}$
and lower bounding curve $V(t) = v_0 e^{\beta t}$ satisfy the inequalities $v_0 < x_0 < u_0$ and
$\beta < \mu < \alpha$. Then
\begin{itemize}
\item[a)] The probability of ever hitting the lower boundary satisfies
\begin{equation} \label{inf_lb_rate}
- \lim_{\epsilon \rightarrow 0} \epsilon \log \bP\left(\inf_{t\geq 0} X_t^\epsilon - v_0 e^{\beta t} \leq 0 \right) 
\;=: \; I_{V}(\infty) \;=\;  \frac{x_0^2 \mu}{\sigma^2} \, \, 
\frac{1-e^{-2\mu T_V}}{\left(1+\frac{\beta}{\mu-\beta}\,e^{-2\mu T_V} \right)^2} 
\end{equation}
and $T_V$ is the unique root of equation (\ref{topt-v}). 
The optimal path $x_*$ hitting the lower bound is given by
\begin{equation} \label{OU-optimal-path}
x_*(t) \;=\; x_0 \frac{ e^{-\mu(T_V-t)} + \left(\frac{\mu}{\beta} - 1\right) e^{\mu(T_V-t)}}{ e^{-\mu T_V} 
+ \left(\frac{\mu}{\beta} - 1\right) e^{\mu T_V}} .
\end{equation}
\item[b)] The probability of ever hitting the upper boundary satisfies
\begin{equation} \label{inf_ub_rate}
- \lim_{\epsilon \rightarrow 0} \epsilon \log \bP\left(\inf_{t\geq 0} X_t^\epsilon - u_0 e^{\alpha t} \geq 0 \right) 
\;=\; I_{U}(\infty) \;=\; \frac{x_0^2 \mu}{\sigma^2} \, \, 
\frac{1-e^{-2\mu T_U}}{\left(1+\frac{\alpha}{\mu-\alpha}\,e^{-2\mu T_U} \right)^2} 
\end{equation}
and $T_U$ is the unique root of the equation (\ref{topt-u}). The optimal path hitting the upper bound is given by
\begin{equation} \label{u-opt-path-1}
x(t) \;=\; x_0 \,
\frac{ e^{-\mu(T_U-t)} - \left(1- \frac{\mu}{\alpha} \right) e^{\mu(T_U-t)}}
{ e^{-\mu T_U} - \left(1- \frac{\mu}{\alpha} \right) e^{\mu T_U}} .
\end{equation}
\end{itemize}
\end{theorem}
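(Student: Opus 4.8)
The plan is to identify the infinite-horizon rate $I_V(\infty)$ as the value of a variational problem with a \emph{free terminal time} and a terminal point constrained to slide along the boundary curve, and to solve that problem with the transversality conditions of the calculus of variations, as the authors announce. On the probabilistic side, for the lower boundary the event $\{\inf_{t\geq 0}(X_t^\epsilon - v_0e^{\beta t})\leq 0\}$ is, up to the usual open/closed set subtleties, the set of paths that, starting from $x_0$, touch $V(t)=v_0e^{\beta t}$ at some finite time $T$ while remaining above $V$ on $[0,T)$; by the rate function (\ref{Action}) and Theorem \ref{th:OU-1} its rate is $I_V(\infty)=\inf\{J(x;T): T>0,\ x\in\mathcal{H}^1_{x_0,V(T)},\ x(s)\geq V(s)\ \text{on}\ [0,T]\}=\inf_{T>0}J_*(T\wedge t^o_V)=J_*(t^o_V)$. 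So the quantity to be computed is $J_*(t^o_V)$, and I would obtain it directly from the free-terminal-time problem rather than by minimising the finite-horizon family.

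First I would recall from Part 1 of the proof of Theorem \ref{th:OU-1} that the Euler--Lagrange equation for $J(x;T)=\int_0^T\frac{1}{2\sigma^2}(x'-\mu x)^2\,ds$ is $x''=\mu^2 x$, with general solution $x(t)=c_1e^{\mu t}+c_2e^{-\mu t}$, the initial condition forcing $c_1+c_2=x_0$ and the requirement that the path reach the curve forcing $c_1e^{\mu T}+c_2e^{-\mu T}=v_0e^{\beta T}$. The third equation is supplied by the transversality condition at the terminal curve $V$, namely $\big[F+(V'-x')F_{x'}\big]_{t=T}=0$ with $F=\frac{1}{2\sigma^2}(x'-\mu x)^2$ and $F_{x'}=\sigma^{-2}(x'-\mu x)$; factoring, this reads $\sigma^{-2}(x'-\mu x)\big[V'-\tfrac12(x'+\mu x)\big]=0$ at $t=T$. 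The root $x'(T)=\mu x(T)$ forces $c_2=0$, i.e.\ the path $x(t)=x_0e^{\mu t}$, which never meets $V$ and must be discarded; the remaining condition is $V'(T)=\tfrac12\big(x'(T)+\mu x(T)\big)$, equivalently $\beta v_0e^{\beta T}=\mu c_1e^{\mu T}$.

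Next I would solve the resulting $3\times3$ system. Combining the incidence and transversality relations gives $c_1=\frac{\beta v_0}{\mu}e^{(\beta-\mu)T}$ and $c_2=\frac{\mu-\beta}{\mu}v_0e^{(\beta+\mu)T}$; inserting these into $c_1+c_2=x_0$ and dividing by $v_0$ yields exactly $\phi_V(T)=x_0/v_0$, so $T$ equals the unique root $T_V=t^o_V$ of (\ref{topt-v}). Substituting $c_1,c_2$ back into $x(t)=c_1e^{\mu t}+c_2e^{-\mu t}$ and normalising through $x(0)=x_0$ reproduces the optimal path (\ref{OU-optimal-path}); and from the identity $J_*(T)=\frac{\mu c_2^2}{\sigma^2}(1-e^{-2\mu T})$ established in Part 1, together with $x_0=c_1+c_2=c_2\big(1+\frac{\beta}{\mu-\beta}e^{-2\mu T_V}\big)$, one gets $I_V(\infty)=J_*(T_V)=\frac{x_0^2\mu}{\sigma^2}\,\frac{1-e^{-2\mu T_V}}{(1+\frac{\beta}{\mu-\beta}e^{-2\mu T_V})^2}$, which is (\ref{inf_lb_rate}). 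Part b) is entirely parallel with the terminal curve $U(t)=u_0e^{\alpha t}$, $\alpha>\mu$: the same steps give $c_1=\frac{\alpha u_0}{\mu}e^{(\alpha-\mu)T}$ and $c_2=\frac{\mu-\alpha}{\mu}u_0e^{(\alpha+\mu)T}$ (now $c_2<0$), the equation $\phi_U(T)=x_0/u_0$ of (\ref{topt-u}), the path (\ref{u-opt-path-1}), and the rate (\ref{inf_ub_rate}); I would simply note this and omit the repetition.

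Two points call for care. First, the stationary path produced above must be shown to be a genuine minimiser and to satisfy the path-inequality constraint $x\geq V$ on $[0,T_V)$: global optimality among paths with the same endpoints follows from the convexity of $(x,x')\mapsto\frac{1}{2\sigma^2}(x'-\mu x)^2$ exactly as in Part 1, while for the inequality constraint one uses that here $c_1,c_2>0$ so $x$ is strictly convex, and that $x'-\mu x=-2\mu c_2e^{-\mu t}<0$ turns the transversality relation into $x'(T_V)<V'(T_V)$, so $x$ meets $V$ from above at $T_V$; combined with $x(0)>V(0)$ and the fact that two such convex curves cross at most twice, this forces $x(s)>V(s)$ on $[0,T_V)$, i.e.\ $T_V\leq t_2$ in the notation of Part 3, so the constraint is automatic. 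Second --- and this is the main obstacle --- one must pass rigorously from the finite-horizon LDP of Theorem \ref{th:OU-1} to the infinite-horizon statement. Here $\mathbb{P}(\inf_{t\geq 0}(X_t^\epsilon-V(t))\leq 0)\geq\mathbb{P}(\inf_{t\in[0,T]}(X_t^\epsilon-V(t))\leq 0)$ gives the lower bound on the probability for every $T$, hence the upper bound on the rate; for the matching lower bound on the rate one must rule out crossings at arbitrarily large times, which is legitimate because $I_V(T)=J_*(T\wedge t^o_V)$ is non-increasing and constant equal to $I_V(\infty)$ for $T\geq t^o_V$, while the cost $J_*(T)$ of a crossing occurring exactly at time $T$ increases to $\mu x_0^2/\sigma^2>I_V(\infty)$ as $T\to\infty$, so late crossings are strictly suboptimal. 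Making this uniform in the low-noise limit, i.e.\ controlling the probability of a first crossing after a large time $T$, is where most of the analytic work lies, and I would carry it out by a standard exhaustion argument on the family $\{X_t^\epsilon\}$ together with the finite-horizon estimate.
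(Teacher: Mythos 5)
Your proposal is correct and follows essentially the same route as the paper: a free-terminal-time variational problem with the endpoint constrained to lie on the boundary curve, solved via the Euler--Lagrange equation together with the transversality condition $\bigl[F+(V'-x')F_{x'}\bigr]_{t=T}=0$, whose non-trivial root yields the optimal hitting time as the root of $\phi_V(T)=x_0/v_0$ (the paper works the upper-boundary case explicitly and declares the lower analogous, while you work the lower case with a slightly cleaner direct extraction of $c_1,c_2$ in terms of $T$). You are also more careful than the paper in flagging that the passage from the finite-horizon LDP to the infinite-horizon statement requires a uniform control on late first crossings -- the paper's proof takes this step for granted and proceeds directly to the variational problem.
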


\begin{proof}
Consider first the problem of hitting the upper boundary at some time $T_U$ before hitting the lower boundary. We will obtain
low noise logarithmic asymptotics for the probability of hitting the upper boundary (without having first hit the lower).
Because in the limit, as $\epsilon \rightarrow 0$, the probability of ever hitting either the upper or the lower boundary 
goes to 0 exponentially (in $\frac{1}{\epsilon}$) we expect that the presence of the lower boundary (and the stipulation 
to avoid it) does not affect the probability of hitting the upper boundary.
 
The optimization problem for the action functional becomes
\begin{eqnarray} \label{action_functional}
&& \min \int_0^{T_U} F(x,x',t)dt, \hspace{0.1in}  \mbox{with } F(x,x',t) = \frac{1}{2\sigma^2} \left( x' - \mu x \right)^2, \hspace{0.2in} \\ \nonumber
&& \mbox{subject to the constraints } \\ \label{end_constraints}
&&  x(0)=x_0, \;\; \mbox{ and } x(T_U)=U(T_U) \\  \label{path_ineq_upper}
&&  V(t)< x(t) < U(t) \; \mbox{ for } 0\leq t < T_U, \;\; 
\end{eqnarray}

In the above, both the optimal path $x$ and the horizon $T_U$ are unknowns to be determined.
Our approach to dealing with the {\em inequality path constraint,} (\ref{path_ineq_upper}) 
$x(t)>V(t)$ for all $t \in [0,T)$ 
will be to initially ignore it and obtain an optimal hitting time $T_U$ and an optimal path $x_*$ minimizing the criterion
(\ref{action_functional}) and satisfying the
boundary conditions (\ref{end_constraints}).  We will then show that this optimal path satisfies the constraints
(\ref{path_ineq_upper}).

The necessary conditions for a minimum in the problem {\em without the path inequality constraint} are
\begin{eqnarray}
&\mbox{Euler-Lagrange Equation:}& \;\; F_x - \frac{d}{dt} F_{x'} = 0, \label{EL} \\
&\mbox{Boundary Conditions:}& \;\; x(0)=x_0, \;\;\; x(T_U)=U(T_U), \ \label{B1-OU} \\
&\mbox{Transversality Condition:}& \;\; F + (U'-x')F_{x'} =0 \;\; \mbox{ at } T_U. \label{Transversality-OU}
\end{eqnarray}
Taking into account that $F_{x}  = - \mu \sigma^{-2} \left( x' -\mu x \right)$, $F_{x'} =  \sigma^{-2} \left( x' -\mu x \right)$,
$\frac{d}{dt} F_{x'} =  \sigma^{-2} \left( x'' -\mu x' \right)$, the Euler-Lagrange equation becomes
\[
 F_x - \frac{d}{dt} F_{x'} = - \sigma^{-2} \left( x'' -\mu^2 x \right) = 0
\]
and thus
\begin{equation} \label{EulerDE}
x'' - \mu^2 x \;=\;0.
\end{equation}
This has the general solution
\begin{equation} \label{xt-OU}
x(t) = C_1 e^{\mu t} + C_2 e^{-\mu t} .
\end{equation}
Taking into account the boundary conditions (\ref{B1-OU}), we obtain
\begin{eqnarray} \label{B2}
x(0)&=& C_1+C_2 \;=\; x_0, \\ \label{B3}
x(T)&=& C_1 e^{\mu T_U} + C_2 e^{-\mu T_U} \;=\; u_0 e^{\alpha T_U} .
\end{eqnarray}
The transversality condition (\ref{Transversality-OU}) gives
\[
\frac{1}{2\sigma^2} \left(x'(T_U)-\mu x(T_U) \right)^2 + \left(u_0 \alpha e^{\alpha T_U} - x'(T_U) \right) \frac{1}{\sigma^2} 
\left(x'(T_U) -\mu x(T_U) \right) \;=\; 0
\]
or
\begin{equation} \label{product}
 \left(x'(T_U) -\mu x(T_U) \right) \, \left(  -x'(T_U)-\mu x(T_U) + 2u_0 \alpha e^{\alpha T_U}  \right) \;=\; 0.
\end{equation}
Taking into account (\ref{xt-OU}), it follows that $x'(T_U)-\mu x(T_U) = -2\mu C_2 e^{-\mu T_U}$ and hence, if the first factor of
(\ref{product}) were to vanish, this would imply that $C_2 =0$. This in turn implies, in view of (\ref{xt-OU}), (\ref{B2}), and
(\ref{B3}), that $x(T) = x_0 e^{\mu T_U} = u_0 e^{\alpha T_U}$ which is impossible
since $x_0 < u_0$ and $\mu < \alpha$. Hence (\ref{product}) implies
\begin{equation} \label{product-1}
u_0 e^{\alpha T_U} \;=\; \frac{\mu}{\alpha}\, C_1 e^{\mu T_U}.
\end{equation}
From (\ref{B1-OU}) and (\ref{product-1}) we obtain
\begin{eqnarray*}
C_1 + C_2 &=& x_0 \\
C_1 \left( 1 -  \frac{\mu}{\alpha} \right) \, e^{\mu T_U} + C_2 e^{-\mu T_U} &=& 0
\end{eqnarray*}
whence it follows that
\begin{equation} \label{C12}
C_1 \;=\; \frac{x_0 e^{-\mu T_U}}{\left(\frac{\mu}{\alpha} - 1 \right) \, e^{\mu T_U} + e^{-\mu T_U}}, \hspace{0.3in}
C_2 \;=\; \frac{x_0 \left(\frac{\mu}{\alpha} - 1 \right) \, e^{\mu T_U}}{\left(\frac{\mu}{\alpha} - 1 \right) \, e^{\mu T_U} + e^{-\mu T_U}}.
\end{equation}
From (\ref{xt-OU}) and (\ref{product-1}) we obtain the following equation 
\begin{equation} \label{u-transversality-OU-1}
\left(\frac{\alpha}{\mu} -1 \right) \, e^{(\mu + \alpha)T_U} \,-\,  \frac{\alpha}{\mu} \,e^{(\alpha-\mu) T_U} + \frac{x_0}{u_0} \;=\; 0.
\end{equation}
which must be satisfied by the optimal hitting time $T_U$. In fact we will show that this equation
has a unique solution, i.e.\  $T_U$ is the unique solution of (\ref{topt-u}): Indeed, with $\phi_U(t)$ as defined in (\ref{topt-u})
we have $\phi_U(0)= 1$, $\lim_{t\rightarrow \infty} \phi_U(t) = - \infty$, and $\phi_U'(t) = - \frac{\alpha -\mu}{\mu} e^{(\mu+\alpha)t}
\left( \mu + \alpha(1- e^{-2\mu t}) \right) <0$ for all $t \geq 0$.

An alternative expression for $C_1$, $C_2$, taking into account (\ref{u-transversality-OU-1}) is 
\begin{equation} \label{C12-alt}
C_1 \;=\; u_0 \frac{\alpha}{\mu} e^{(\alpha-\mu)T_U}  , \hspace{0.3in}
C_2 \;=\; u_0 \left( 1-\frac{\alpha}{\mu}\right)  e^{(\alpha+\mu)T_U}  .
\end{equation}
Using (\ref{xt-OU}) and (\ref{C12}) we obtain the expression (\ref{u-opt-path-1}). If instead we use (\ref{C12-alt})
we obtain the alternative expression for the optimal path 
\begin{equation} \label{u-opt-path-2}
x(t) \;=\; u_0 e^{\alpha T_U} \left[ \frac{\alpha}{\mu} e^{-\mu(T_U-t)} 
- \left( \frac{\alpha}{\mu} - 1 \right) e^{\mu(T_U-t)}\right].
\end{equation}



From the above we obtain the rate function $I_U$ given in (\ref{inf_ub_rate}).
and hence, on a practical note, the probability that the OU process reaches the upper boundary satisfies approximately
\[
\log \bP(\sup_{t\geq 0} X_t -u_0 e^{\alpha t} \geq 0) \approx -I_U.
\]
The quality of this approximation improves as $\sigma$ becomes smaller. 
Note in particular that the value of $T_U$ does not depend on $\sigma$ as is clear from (\ref{u-transversality-OU-1}).
Alternative expressions for the rate $I_U$, using (\ref{u-transversality-OU-1}) are, of course, possible. For instance,
\begin{equation} \label{alt-III}
I_U \;=\; \frac{\mu}{\sigma^2}
\frac{\left(u_0 e^{(\alpha-\mu)T_U} - x_0\right)^2}{1-e^{-2\mu T_U}}  \;\;=\;\;
\frac{\mu}{\sigma^2}u_0^2
\left( 1-\frac{\mu}{\alpha}\right)^2 e^{2\alpha T_U} \left(e^{2\mu T_U} - 1 \right).
\end{equation}

There remains to show that the optimal path obtained in (\ref{u-opt-path-2}) also satisfies the inequality constraints 
$v_0 e^{\beta t} < x(t) < u_0 e^{\alpha t}$ for $t \in [0,T_U)$. 
Indeed  
\begin{eqnarray*}
x(t) - x_0 e^{\mu t} &=&  x_0 \,
\frac{ e^{-\mu(T_U-t)} - \left(1- \frac{\mu}{\alpha} \right) e^{\mu(T_U-t)}}
{ e^{-\mu T_U} - \left(1- \frac{\mu}{\alpha} \right) e^{\mu T_U}} - x_0 e^{\mu t} \;=\;
\frac{2\left( 1- \frac{\mu}{\alpha}\right)  e^{\mu T_U} \sinh \mu t}{ e^{-\mu T_U} - \left(1- \frac{\mu}{\alpha} \right) e^{\mu T_U}} \\
&=& 2u_0\left( \frac{\alpha}{\mu} - 1 \right) e^{(\mu+\alpha)t } \sinh \mu t \;>\; 0 \;\; \mbox{ for } t>0.
\end{eqnarray*}
Since $v_0 e^{\beta t} < x_0 e^{\mu t}$ for all $t>0$ the above inequality implies $x(t) > v_0e^{\beta t} = V(t)$ for
$t \in [0,T_U)$. 

Next, define the function $f(t) := u_0 e^{\alpha t} - x(t)$ for $t \in [0, T_U]$. Note that $f(0) = u_0 - x_0 >0$ and $f(T_U) = 0$.
Also $f'(0) = \alpha\left( e^{-\alpha T_U} - e^{-\mu T_U} \right) - (\alpha - \mu) e^{\mu T_U} <0$ (since $\mu < \alpha$).
Finally, $f''(t) = - \alpha \mu e^{\mu(t-T_U)} + \alpha^2 e^{\alpha(t-T_U)} + \mu (\alpha -\mu) e^{\mu (T_U -t)} >0$ 
for all $t \in [0,T_U]$. Thus $f$ is convex on $[0,T_U]$ and hence, since $f(T_U)=0$, the inequality constraint $f(t) >0$
holds on $[0,T_U]$ provided that $f'(T) <0$. Indeed
$f'(t) = - \alpha e^{-\mu(t-T_U)} + \alpha e^{\alpha (t-T_U)} -(\alpha -\mu) e^{(T_U-t)}$ and hence
$f'(T_U) = -\alpha + \mu <0$. 
Therefore the critical path $x(t)$ satisfies  the inequality $x(t) < U(t)$ as well, for all $t\in [0,T)$.



\begin{figure}
\begin{center}
\includegraphics[width=3.0in]{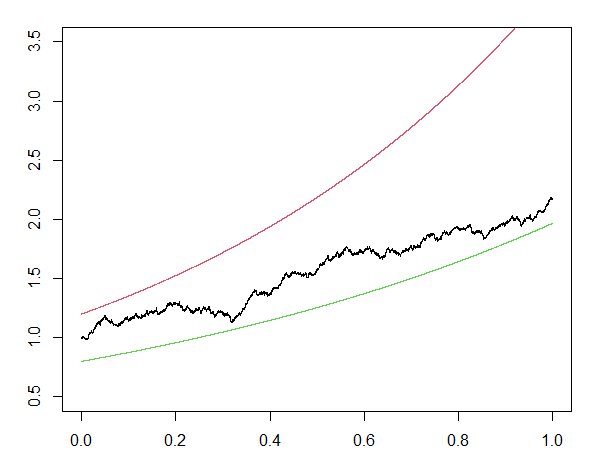}
\caption{An Ornstein-Uhlenbeck process evolving between an upper and a lower exponential bound.}
\label{fig:r-2}
\end{center}
\end{figure}
Intuitively, the uniqueness of the solution of (\ref{u-transversality-OU-1}) makes sense. 
If $T_U$ is very small the noise factor $W_t$ must exhibit an
extremely unlikely behavior in order for the OU process to rise to the level of the upper curve $U(t)$. 
So having more time available makes the rare event of hitting the upper boundary more likely. But if $T_U$ is too 
large, because of the difference in the rates of the two processes, again hitting the upper boundary becomes 
extremely unlikely. Also, in some cases, in the infinite horizon problem, an infimum may exist but no minimum. 
The rate function $I$ is not "good" and compactness fails. In practical terms, the more time available the more 
likely it is that the noise term will cause the diffusion path to hit the deterministic boundary curve.

\end{proof}

\begin{figure}[h!]
\begin{center}
\includegraphics[width=3in]{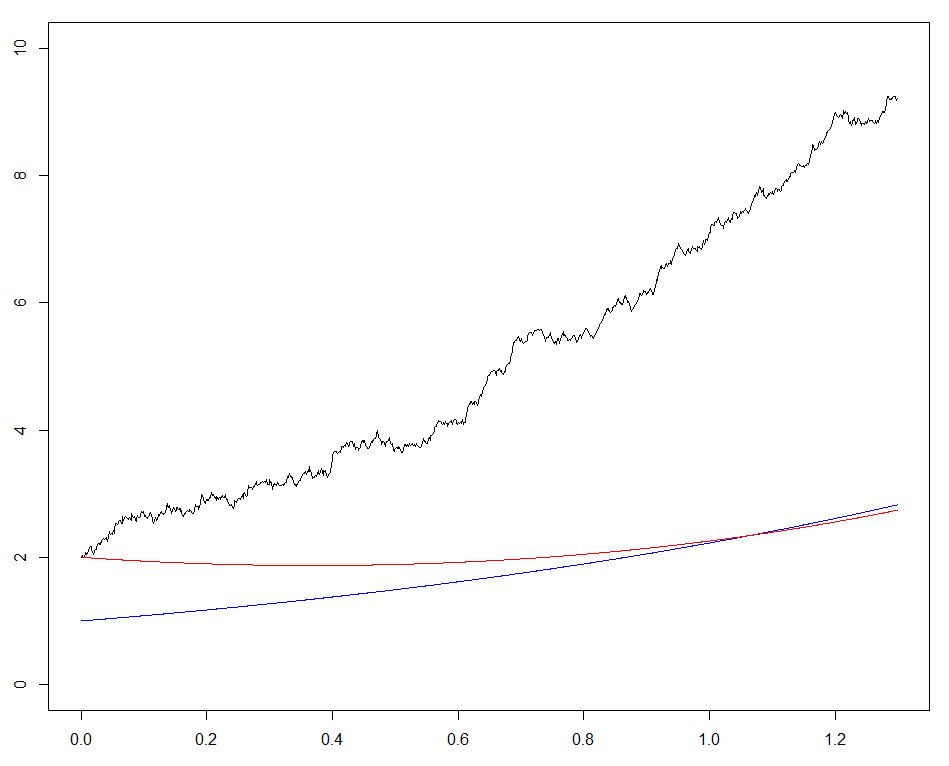}
\caption{The black line is a typical path of an OU process with $\mu=1$, $\sigma=1$ and starting point $x_0=2$. The blue curve is the lower
exponential bound $v_0e^{\beta t}$ with $v_0=1$ and $\beta =0.8$. The meeting $T$ obtained by solving numerically (\ref{topt-v})
is equal to 1.0621. Finally the red optimal (large deviation) path is obtained from (\ref{OU-optimal-path})}
\label{fig:r-3}
\end{center}
\end{figure}
In Figures \ref{fig:r-4}, \ref{fig:r-5}, we consider the OU process $dX_t =  X_t + dW_t$, with $X_0 = x_0$, 
(with the value of the parameters $\mu= 1$, $\sigma=1$) and the lower  and upper bounds 
$v(t)=0.5 e^{0.5 t}$, $u(t) = 2 e^{1.3t}$. (Thus $\alpha=1.3$, $u_0=2$, $\beta=0.5$ and $v_0=0.5$.)
In Figure \ref{fig:r-3} the optimal value of $T$ that corresponds to the solution of the optimization problems 
of section \ref{sec:ihp-lub}  (equations (\ref{topt-v}) and (\ref{topt-u})).
\begin{figure}[h!]
\begin{center}
\includegraphics[width=3in]{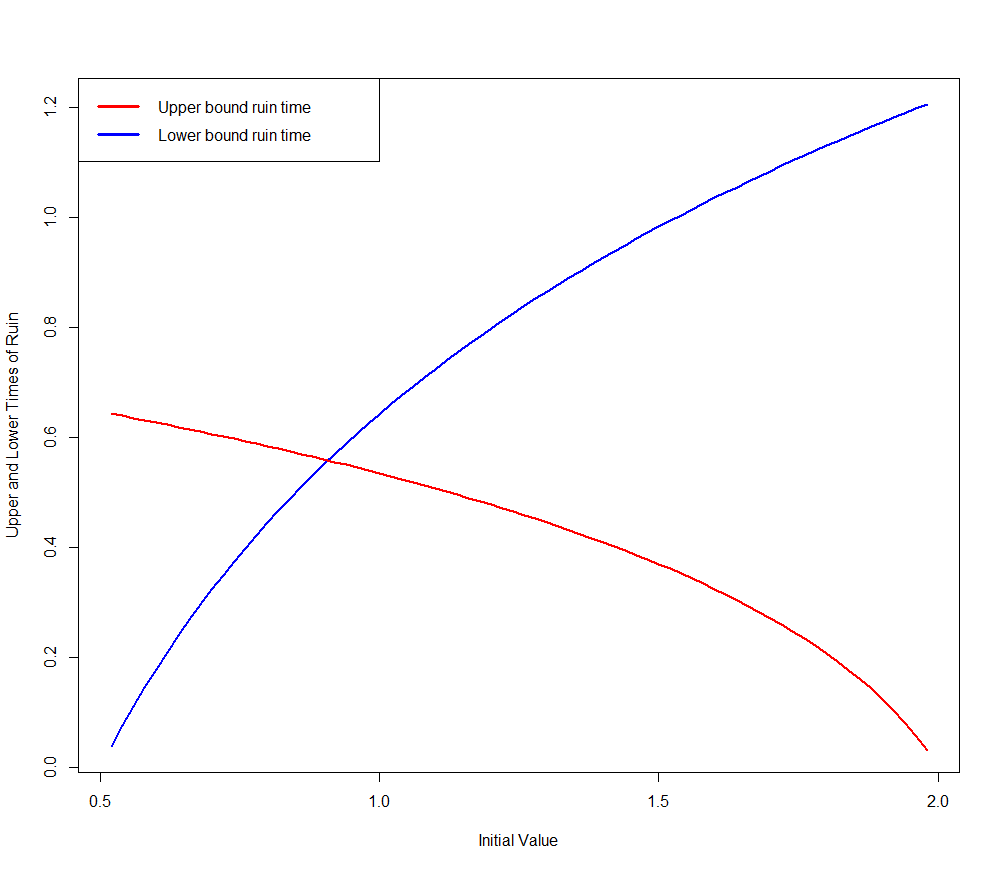}
\caption{The system under consideration is an OU process with $\mu = 1$, $\sigma=1$ and initial position $x_0$. 
The red line is the ``optimal hitting time'' for the upper curve $u_0 e^{\alpha t}$ with $u_0=2$, $\alpha =1.3$, 
i.e.\ the solution of (\ref{topt-u}). Note that this optimal time decreases to zero as $x_0$ increases 
to $u_0=2$. Respectively, the blue line is the corresponding ''optimal hitting time'' for the lower curve $v_0 e^{\beta t}$, 
$\beta=0.5$, $v_0=0.5$, i.e.\ the solution of (\ref{topt-v}). In this case the optimal time increases as the 
distance of $x_0$ from $v_0$ increases.}
\label{fig:r-4}
\end{center}
\end{figure}
\begin{figure}[ht!]
\begin{center}
\includegraphics[width=3in]{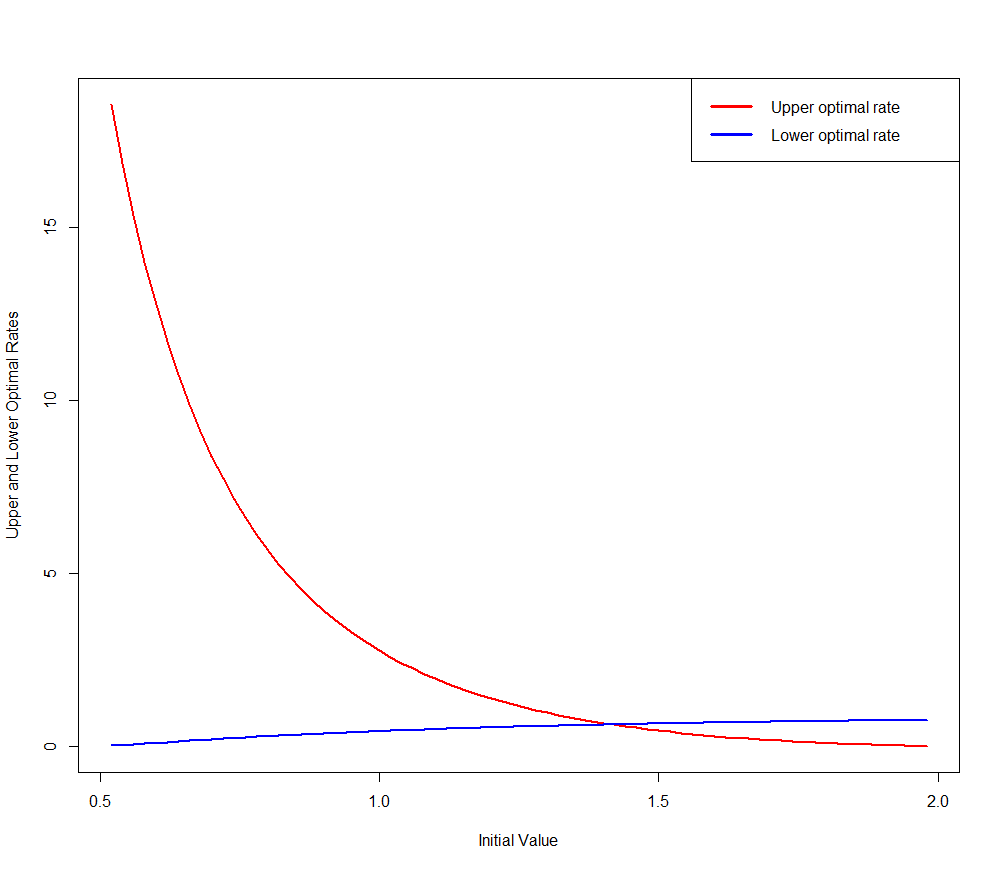}
\caption{The OU process and the upper and lower curves are as in Figure \ref{fig:r-4}. The red line is a plot of the optimal 
rate $I_U$ for hitting the upper curve in the infinite horizon problem given by (\ref{inf_ub_rate}). Correspondingly, the blue line gives 
the plot of the optimal rate for hitting the lower curve, $I_V$, given by (\ref{inf_lb_rate}). The point of intersection of the 
two curves corresponds to the initial condition $x_0$ for which the exponential rate for the probability of hitting the upper 
curve is equal to that for the lower curve.}
\label{fig:r-5}
\end{center}
\end{figure}

\section{More general models}
\subsection{Ornstein-Uhlenbeck with a general linear drift}

Here we consider the Ornstein-Uhlenbeck process with a more general drift. This is important since it arises as a 
diffusion approximation in the risk models with interest rates considered in the Introduction. Consider the SDE
\[
dX_t = (\mu X_t +r )dt + \sigma dW_t, \;\;\; X_0=x_0.
\]
The upper limit is $U(t)=u_0 e^{\alpha t}$. We assume that $u_0 > x_0$ and $\mu < \alpha$. In the deterministic limit, 
when $\sigma \rightarrow 0$, one obtains the Ordrinary Differential Equation 
$\frac{d}{dt}x(t) = \mu x(t) +r$ which has the solution $x(t) = x_0 e^{\mu t} + \frac{r}{\mu}(e^{\mu t} -1)$.
To ensure that we remain in range of applicability of Large Deviation results we will need to ensure that the deterministic 
solution remains strictly below the upper bound, $U(t)$ for all $t\geq 0$. Let
\begin{equation} \label{r-requation}
\phi(t) \; := \; U(t)-x(t) \;=\; u_0e^{\alpha t} - \left(x_0 + \frac{r}{\mu}\right)e^{\mu t} + \frac{r}{\mu}.
\end{equation}
Then we must have
\begin{equation}  \label{phi-positive}
\inf_{t\geq 0}  \phi(t) >0.
\end{equation}
We will make the additional assumption that
\begin{equation} \label{AdAss}
r \; < \; u_0 (\alpha -\mu).
\end{equation}
This assumption ensures that (\ref{phi-positive}) holds. Indeed, $\phi(0) = u_0 -x_0 >0$ and
\[
\phi'(t) = e^{\mu t} \left[ u_0\alpha e^{(\alpha - \mu)t} - x_0 \mu - r\right].
\]
Then,
\[
 u_0\alpha e^{(\alpha - \mu)t} - x_0 \mu - r \geq u_0 \alpha - x_0 \mu - r >  u_0 \alpha - x_0 \alpha - r > 0
\]
and hence (\ref{phi-positive}) holds.

The action functional is
\[
\frac{1}{2\sigma^2} \int_0^T \left( x'-\mu x -r \right)^2 du .
\]
The Euler-Lagrange differential equation $F_x - \frac{d}{dt} F_{x'}=0$  reduces to
\[
x''-\mu^2 x - \mu r = 0.
\]
Its general solution is
\begin{equation} \label{gsgl}
x(t) = C_1 e^{\mu t} + C_2 e^{-\mu t} - \frac{r}{\mu} .
\end{equation}
The boundary conditions are
\begin{eqnarray}  \label{r-b1}
x_0 &=& C_1 + C_2 - \frac{r}{\mu} \\ \label{r-b2}
u_0 e^{\alpha T} &=& C_1 e^{\mu T} + C_2 e^{-\mu T} - \frac{r}{\mu}.
\end{eqnarray}
The transversality condition that must be satisfied by a critical path meeting the curve $U(t):=u_0 e^{\alpha t}$ at $T$ is
\[
F + (U'(T)-x'(T))F_{x'} =0 \;\; \mbox{ or } \;\; (x'-r-\mu x)\left(-x'-r -\mu x + 2 u_0 e^{\alpha T} \right) =0
\]
which, using (\ref{gsgl}),  reduces to
\begin{equation} \label{intermediate-r}
C_2 \left( u_0 \alpha e^{\alpha T}-\mu C_1 e^{\mu T} \right) \;=\; 0.
\end{equation}
The above equation leads to the examination of two cases:
\paragraph{Case 1. $C_2=0$.} Using this value in (\ref{r-b1}), (\ref{r-b2}), and eliminating $C_1$ among them gives
\begin{equation} \label{case1-oudr}
u_0e^{\alpha T} - \left( x_0+ \frac{r}{\mu} \right) e^{\mu T} + \frac{r}{\mu} \;=\; 0.
\end{equation}
This equation corresponds to the requirement $\phi(T)=0$ for the function defined in (\ref{r-requation}) which is impossible.
Hence  $C_2=0$ is impossible.
\paragraph{Case 2. $u_0 \alpha e^{\alpha T}-\mu C_1 e^{\mu T} =0$.}
This, together with (\ref{r-b2}) gives
\begin{equation} \label{intermediate-r1}
u_0\left( 1- \frac{\alpha}{\mu} \right) e^{\alpha T} = C_2 e^{-\mu T} - \frac{r}{\mu}.
\end{equation}
Using this, (\ref{r-b1}), (\ref{r-b2}), give
\begin{eqnarray} \label{r-b-s1}
C_1 + C_2 &=& x_0 + \frac{r}{\mu} \\ \label{r-b-s2}
C_1  e^{\mu T} + C_2 e^{-\mu T} &=& u_0 e^{\alpha T} + \frac{r}{\mu} .
\end{eqnarray}
The above system has the solution
\[
C_1 \;=\; \frac{ e^{-\mu T} \left(x_0+ \frac{r}{\mu} \right) - \left( u_0 e^{\alpha T} + \frac{r}{\mu}\right)}{ e^{-\mu T} - e^{\mu T} }, \;\;\;\;
C_2 \;=\; \frac{ u_0 e^{\alpha T} + \frac{r}{\mu} - e^{\mu T} \left(x_0+ \frac{r}{\mu} \right)}{ e^{-\mu T} - e^{\mu T} },
\]
Using this, (\ref{intermediate-r1}) reduces to
\begin{equation} \label{r-T}
u_0 \left(\frac{\alpha}{\mu} -1 \right) e^{(\alpha+\mu)T} - u_0\frac{\alpha}{\mu} e^{(\alpha-\mu)T} - \frac{r}{\mu} e^{\mu T} + x_0 + \frac{r}{\mu}  =0.
\end{equation}
Under Assumption (\ref{AdAss}) i.e.\ if the drift term $r$ is either negative or, if positive, not too large the above equation has a unique solution
which determines $T$.

Define
\[
f(t) = u_0\left( \frac{\alpha}{\mu}- 1\right) e^{t(\alpha+\mu)} - u_0 \frac{\alpha}{\mu} e^{(\alpha- \mu)t} - \frac{r}{\mu} e^{\mu t} + x_0 + \frac{r}{\mu}
\]
\[
f(0) = x_0 -u_0 < 0.
\]
Also $\lim_{t\rightarrow \infty} f(t) = + \infty$.
\[
f'(t) = (\alpha + \mu) u_0 \left( \frac{\alpha}{\mu} -1\right) e^{t(\alpha+\mu)} - u_0 \frac{\alpha}{\mu} (\alpha -\mu) e^{(\alpha -\mu) t}
- r e^{\mu t}.
\]
\[
f'(0) = u_0(\alpha -\mu) -r.
\]
Under the assumption $f'(0) >0$.
We will show that the condition implies $f'(t)>0$ for all $t>0$.
\[
e^{-\mu t} f'(t) =:g(t) = (\alpha+\mu)u_0 \left(\frac{\alpha}{\mu}-1\right) e^{\alpha t} - u_0 \frac{\alpha}{\mu}(\alpha -\mu) e^{(\alpha -2 \mu )t} -r
\]
$g(0)= f'(0)= u_0(\alpha-\mu)-r >0$.
\[
g'(t) = \frac{\alpha}{\mu}e^{\alpha t} (\alpha-\mu) u_0 \left( \alpha + \mu - (\alpha -2 \mu) e^{-2\mu t} \right)  >0 \;\;\; \mbox{ for all } t\geq 0.
\]
This implies the uniqueness of the solution of (\ref{r-T}).

Then
\begin{equation} \label{xprime-r}
x'(T) \;=\; \mu \,
\frac{ - 2(x_0 + \frac{r}{\mu} ) +  \left( u_0 e^{\alpha T}  + \frac{r}{\mu} \right) \left( e^{\mu T} + e^{-\mu T} \right)}
{e^{\mu T} - e^{-\mu T}}.
\end{equation}
The condition for this solution to satisfy the inequality constraints as well is
\[
x'(T) > u_0 \alpha e^{\alpha T} .
\]
This is written as
\[
\mu
\frac{\left( u_0 e^{\alpha T}  + \frac{r}{\mu} \right) \left( e^{\mu T} + e^{-\mu T} \right) - 2(x_0 + \frac{r}{\mu})}{e^{\mu T} - e^{-\mu T}}
> \alpha u_0 e^{\alpha T}.
\]
This is equivalent to
\[
\frac{r}{\mu} \left( e^{\mu T} + e^{-\mu T} \right) + u_0 e^{T(\alpha - \mu)} - 2 (x_0 + \frac{r}{\mu} ) > u_0 e^{\alpha T}
\left[ \left(\frac{\alpha}{\mu} - 1\right) e^{\mu T} - \frac{\alpha}{\mu} e^{- \mu T} \right] \;=\; \frac{r}{\mu} ( e^{\mu T} - 1 ) - x_0
\]
the last equation following from (\ref{r-T}).
Hence
\[
\frac{r}{\mu} e^{-\mu T} + u_0 e^{(\alpha -\mu)T} > x_0 + \frac{r}{\mu}.
\]
This inequality however is true because it is equivalent to $\phi(T)>0$ for the function $\phi$ defined in (\ref{r-requation}), which is true.

The optimal path is in this case
\[
x(t) \;=\; \frac{\left(x_0+\frac{r}{\mu}\right) \sinh(\mu(T-t)) + \left(u_0 e^{\alpha T}+\frac{r}{\mu} \right) \sinh(\mu t)}{\sinh(\mu T)} - \frac{r}{\mu}.
\]
The optimal rate can be obtained from the fact that $x'(t)-\mu x(t) - r = 2C_2 e^{\mu t}$ and hence
\[
I \;=\; \frac{\mu}{\sigma^2} \, \int_0^T 4 C_2^2 e^{\mu t} dt \;=\;
\frac{\mu}{\sigma^2} \, \frac{\left( u_0 e^{(\alpha-\mu)T} - \frac{r}{\mu} \left( 1- e^{-\mu T} \right) - x_0  \right)^2}{1-e^{-2\mu T}}.
\]
Note, of course, that when $r\rightarrow 0$ the above reduces to the value of $I$ given in (\ref{alt-III}).

\subsection{A Ruin Problem Involving Two Independent OU Processes}
Here we generalize the problem examined in the previous section. The lower (or upper) deterministic exponential boundary now is
also considered to be stochastic - in fact another, independent, OU process. We may thus study the following pair of SDE's
\begin{eqnarray}
dX_t &=& \alpha X_t dt + \sigma dW_t, \hspace{0.3in} X_0 = x_0   \label{SDE1}\\
dY_t &=& \beta Y_t dt + b dB_t, \hspace{0.3in} Y_0 =y_0 .\label{SDE2}
\end{eqnarray}
where $\beta < \alpha$ and $y_0 < x_0$. As a result of these inequalities, in the absence of noise, ($\sigma=b=0$) we would have
$Y_t<X_t$ for all $t$. The presence of noise may cause the two curves to meet however. Again, an exact analysis does not give results in
closed form and we obtain low noise logarithmic asymptotics in the Wentzell-Freidlin framework. Using again Theorem 5.6.7 of \cite[p. 214]{DZ}
we obtain a two dimensional version of (\ref{Action}) for the action functional to be minimized:
\begin{equation} \label{N1}
I \;=\; \int_0^T F(x,x',y,y') dt, \hspace{0.3in} F\;=\; \frac{1}{2} \left[ \frac{1}{\sigma^2} (x'-\alpha x)^2 + \frac{1}{b^2} (y'-\beta y)^2\right],
\end{equation}
The boundary conditions $x(0)=x_0$, $y(0)=y_0$, and $x(T)=y(T)$.

We will again tackle the infinite horizon problem directly and solve the moving boundary variational
problem using the appropriate transversality conditions. Thus the first order necessary conditions for an extremum are
\begin{eqnarray}  \label{N-conditions-1}
&& F_x - \frac{d}{dt} F_{x'} = 0 , \hspace{0.2in}  F_y - \frac{d}{dt} F_{y'} = 0  \\ \label{N-conditions-2}
&& x(T)=y(T)  \\                      \label{N-conditions-4}
&& F_{x'} + F_{y'} = 0 \;\; \mbox{ at } T,\\ \label{N-conditions-5}
&& F - x'F_{x'} - y'F_{y'} =0 \;\; \mbox{ at } T.
\end{eqnarray}
The Euler-Lagrange equations (\ref{N-conditions-1}) give
$x''- \alpha^2 x = 0$ and $y'' - \beta^2 y = 0$
and thus,
$x(t) = C_1 e^{\alpha t} + C_2 e^{-\alpha t}$ and $y(t) = C_3 e^{\beta t} + C_4 e^{-\beta t}$
with boundary conditions
\begin{eqnarray}  \label{NN}
C_1+C_2 = x_0,  \hspace{0.2in} C_3+C_4 = y_0 , \hspace{0.1in} \mbox{and} \hspace{0.1in} C_1e^{\alpha T}
+ C_2 e^{-\alpha T} = C_3 e^{\beta T}+C_4 e^{-\beta T}.
\end{eqnarray}
The first transversality condition, (\ref{N-conditions-4}) gives
\begin{equation} \label{N3}
 \frac{1}{\sigma^2} (x'(T)-\alpha x(T)) + \frac{1}{b^2} (y'(T)-\beta y(T)) = 0
\end{equation}
or
\begin{equation} \label{N4}
\frac{\alpha}{\sigma^2}  C_2 e^{-\alpha T} + \frac{\beta}{b^2}  C_4 e^{-\beta T} = 0 .
\end{equation}
The second transversality condition (\ref{N-conditions-5}), after routine algebraic manipulations, gives
\begin{eqnarray*}
x'F_{x'}+y'F_{y'} -F  &=&   \frac{1}{2\sigma^2} (x'-\alpha x)(x'+\alpha x) + \frac{1}{2b^2} (y'-\beta y)(y'+\beta y) \;=\;0.
\end{eqnarray*}
The above, in view of (\ref{N3}), becomes
\[
\left(x'(T)-\alpha x(T)\right)\, \left(x'(T)+\alpha x(T) -y'(T) - \beta y(T) \right)=0 .
\]
If the first factor is zero then, in view of (\ref{N3}), we obtain
\[
x'(T)-\alpha x(T)=0,  \;\;\; y'(T)-\beta y(T)=0.
\]
In view of the fact that $x'(T)-\alpha x(T)= -2\alpha C_2 e^{-\alpha T}$ this translates into $C_2=0$ and similarly
$y'(T)-\beta y(T)=-2\beta C_4 e^{-\beta T}=0$ implies $C_4=0$. Hence $x(t)=x_0e^{\alpha T}$, $y(t)=y_0e^{\beta T}$, and
$x(T)=y(T)$ implies that $x_0 e^{\alpha T} = y_0e^{\beta T}$ or $e^{(\alpha-\beta)T} = \frac{y_0}{x_0}$. Since
$\alpha-\beta >0$ and $y_0/x_0<1$ it is impossible to find $T>0$ which satisfies this last equation.

The alternative solution is
\begin{equation} \label{N5}
x'(T)+\alpha x(T) \;=\; y'(T) + \beta y(T) .
\end{equation}
Note that
\[
x'(T)+\alpha x(T) = 2\alpha C_1 e^{\alpha T}, \;\;\; y'(T)+ \beta y(T) = 2\beta C_3 e^{\beta T}
\]
and hence (\ref{N5}) gives
\begin{equation} \label{N6}
\alpha C_1 e^{\alpha T} \;=\; \beta C_3 e^{\beta T}.
\end{equation}

\paragraph{\underline{Determination of the optimal path.}}
Displays (\ref{NN}), (\ref{N4}), and (\ref{N6}) provide the following five equations to determine the five unknown quantities,
$C_i$, $i=1,\ldots,4$, and $T$:
\begin{eqnarray}   \label{two-ind-1}
C_1+C_2 &=& x_0 \\  \label{two-ind-2}
\frac{\alpha}{\beta} e^{(\alpha-\beta)T}\,C_1 - \frac{\alpha}{\beta}\, \frac{b^2}{\sigma^2} e^{-(\alpha-\beta)T}\,C_2 &=& y_0 \\  \label{two-ind-3}
C_1e^{\alpha T} + C_2 e^{-\alpha T} &=& \frac{\alpha}{\beta} e^{\alpha T}\,C_1 - \frac{\alpha}{\beta}\, \frac{b^2}{\sigma^2} e^{-\alpha T}\,C_2 \\ \label{two-ind-4}
C_3 &=& \frac{\alpha}{\beta} e^{(\alpha-\beta)T}\,C_1  \\ \label{two-ind-5}
C_4 &=& - \frac{\alpha}{\beta}\, \frac{b^2}{\sigma^2} e^{-(\alpha-\beta)T}\,C_2
\end{eqnarray}
From the above we may obtain the values of $C_i$, $i=1,\ldots,4$ in terms of $T$:
\begin{eqnarray} \nonumber
C_1 &=& x_0 \, \frac{ \left(1+ \frac{\alpha}{\beta} \, \frac{b^2}{\sigma^2}\right) \, e^{-\alpha T}}{\left(1+ \frac{\alpha}{\beta} \, \frac{b^2}{\sigma^2}\right) \, e^{-\alpha T}+ \left(\frac{\alpha}{\beta}-1\right)\, e^{\alpha T}}, \;\;\;
C_2 \;=\; x_0 \, \frac{ \left(\frac{\alpha}{\beta}-1\right)\,e^{\alpha T}}{\left(1+ \frac{\alpha}{\beta} \, \frac{b^2}{\sigma^2}\right) \, e^{-\alpha T}+ \left(\frac{\alpha}{\beta}-1\right)\, e^{\alpha T}},\\
& &  \label{C12-second} \\ \nonumber
C_3 &=& y_0 \, \frac{ \left(1+ \frac{\beta}{\alpha} \, \frac{\sigma^2}{b^2}\right) \, e^{-\beta T}}{\left(1+ \frac{\beta}{\alpha} \, \frac{\sigma^2}{b^2}\right) \, e^{-\beta T}+ \left(\frac{\beta}{\alpha}-1\right)\, e^{\beta T}}, \;\;\;
C_4 \;=\; y_0 \, \frac{ \left(\frac{\beta}{\alpha}-1\right)\,e^{\beta T}}{\left(1+ \frac{\beta}{\alpha} \, \frac{\sigma^2}{b^2}\right) \, e^{-\beta T}+ \left(\frac{\beta}{\alpha}-1\right)\, e^{\beta T}}.
\end{eqnarray}
From these we obtain the following expression for the critical path
\begin{eqnarray}  \nonumber
x(t) &=& x_0 \, \frac{ \left(1+ \frac{\alpha}{\beta} \, \frac{b^2}{\sigma^2}\right) \, e^{\alpha(t-T)} \;+\;\left(\frac{\alpha}{\beta}-1\right)\,e^{\alpha(T-t)}}{\left(1+ \frac{\alpha}{\beta} \, \frac{b^2}{\sigma^2}\right) \, e^{-\alpha T}+ \left(\frac{\alpha}{\beta}-1\right)\, e^{\alpha T}} \\
& &  \label{xt-second} \\ \nonumber
y(t) &=& y_0 \, \frac{ \left(1+ \frac{\beta}{\alpha} \, \frac{\sigma^2}{b^2}\right) \, e^{\beta(t-T)}\;+\; \left(\frac{\beta}{\alpha}-1\right)\,e^{\beta(T-t)}}{\left(1+ \frac{\beta}{\alpha} \, \frac{\sigma^2}{b^2}\right) \, e^{-\beta T}+ \left(\frac{\beta}{\alpha}-1\right)\, e^{\beta T}}
\end{eqnarray}
Of course, there remains the task to determine the optimal meeting time $T$. From the above, when $t=T$
we have
\begin{eqnarray*} 
x(T) &=& x_0 \, \frac{\alpha(b^2+\sigma^2)}{(\alpha-\beta)\sigma^2 e^{\alpha T} + (\beta\sigma^2+\alpha b^2)e^{-\alpha T}}, \\ 
y(T) &=& y_0\, \frac{\beta(b^2+\sigma^2)}{(\beta-\alpha)b^2 e^{\beta T} + (\beta\sigma^2+\alpha b^2)e^{-\beta T}}.
\end{eqnarray*}
At the meeting time $T$, $x(T)=y(T)$ and therefore
\begin{equation} \label{T-for-two-OU}
x_0 \alpha \left[ (\beta-\alpha)b^2 e^{\beta T} + (\beta\sigma^2+\alpha b^2)e^{-\beta T} \right] \;=\; y_0 \beta \left[ (\alpha-\beta)\sigma^2 e^{\alpha T} + (\beta\sigma^2+\alpha b^2)e^{-\alpha T} \right]
\end{equation}

\paragraph{\underline{Determination of the meeting time $T$.}}
We will show that the above equation determines uniquely $T$. To this end, define the function
\[
f(t) \;:=\; (\alpha-\beta)\left[ y_0 \beta \sigma^2 e^{\alpha t} + x_0 \alpha b^2 e^{\beta t} \right] + (\beta \sigma^2 +\alpha b^2) \left[ y_0 \beta e^{-\alpha t} - x_0 \alpha e^{-\beta t} \right], \hspace{0.3in}
t \geq 0.
\]
It holds that
\begin{eqnarray*}
f(0) &=& (\alpha-\beta)\left[ y_0 \beta \sigma^2 + x_0 \alpha b^2 \right] + (\beta \sigma^2 +\alpha b^2) \left[ y_0 \beta - x_0 \alpha \right] \\
& = & \alpha \beta (\sigma^2+b^2)(y_0 \beta- x_0 \alpha) \;<\;0
\end{eqnarray*}
and also $\lim_{t\rightarrow \infty}f(t) =+\infty$. Furthermore
\[
f'(t) \;=\; (\alpha-\beta)\alpha\beta\left[ y_0  \sigma^2 e^{\alpha t} + x_0  b^2 e^{\beta t} \right] + (\beta \sigma^2 +\alpha b^2)\alpha\beta \left[ -y_0  e^{-\alpha t} + x_0 e^{-\beta t} \right]
\]
Clearly $f'(t)>0$ for all $t\geq 0$ since $\left[ -y_0  e^{-\alpha t} + x_0 e^{-\beta t} \right] = e^{-\alpha t} \left[ -y_0 + x_0 e^{(\alpha-\beta)t}\right]>0$ because $\alpha>\beta$ and $x_0 >y_0$.

\paragraph{\underline{$x(t)>y(t)$ when $t \in [0,T)$.}} A straight-forward computation (taking into account (\ref{xt-second}), (\ref{T-for-two-OU})) gives 
\begin{equation} \label{xyprime}
x'(T) - y'(T) \;=\; - \; \frac{ x_0 \alpha(\alpha-\beta) (\sigma^2+b^2)}{(\alpha b^2 +\beta \sigma^2)e^{-\alpha T} + \sigma^2 (\alpha-\beta) e^{\alpha T}} \; < \;0
\end{equation}
Thus it can be seen that the path $x(\cdot)$ starts above $y(\cdot)$ at 0, crosses it from above at $T$ and (since $\alpha>\beta$) crosses it again once more
at some $T^* >T$. In particular we note that $x(t)> y(t)$ for $t \in [0,T)$, i.e. $x$ crosses $y$ at $T$ for the first time.

\paragraph{\underline{Determination of the rate $I$.}}
Taking into account that $x'(t)-\alpha x(t) = -2\alpha C_2 e^{-\alpha t}$ and similarly $y'(t)-\beta y(t) = -2 \beta C_4 e^{-\beta t}$ the rate function becomes
\begin{eqnarray*}
I &=& \frac{1}{2\sigma^2} \int_0^T 4 \alpha^2 C_2^2 e^{-2\alpha t}dt + \frac{1}{2b^2} \int_0^T 4 \beta^2 C_4^2 e^{-2\beta t}dt \;=\;
\frac{\alpha C_2^2}{\sigma^2} \left( 1-e^{-2\alpha T}\right)
+ \frac{\beta C_4^2}{b^2} \left( 1-e^{-2\beta T}\right) \\
&=&  \frac{ \frac{\alpha}{\sigma^2} \left( 1-e^{-2\alpha T}\right)  x_0^2 \,\left(\frac{\alpha}{\beta}-1\right)^2\,e^{2\alpha T}}
{\left[\left(1+ \frac{\alpha}{\beta} \, \frac{b^2}{\sigma^2}\right) \, e^{-\alpha T}+ \left(\frac{\alpha}{\beta}-1\right)\, e^{\alpha T}\right]^2} \;+\;
\frac{\frac{\beta}{b^2} \left( 1-e^{-2\beta T}\right)  y_0^2 \left(\frac{\beta}{\alpha}-1\right)^2\,e^{2\beta T}}
{\left[\left(1+ \frac{\beta}{\alpha} \, \frac{\sigma^2}{b^2}\right) \, e^{-\beta T}+ \left(\frac{\beta}{\alpha}-1\right)\, e^{\beta T}\right]^2}
\end{eqnarray*}
or equivalently
\begin{equation} \label{I-two-OU}
I \;=\; \frac{\alpha (\alpha-\beta)^2\sigma^2 x_0^2\left(e^{2\alpha T}-1\right)}
{\left[(\alpha-\beta)\sigma^2 e^{\alpha T} + (\beta\sigma^2+\alpha b^2)e^{-\alpha T}\right]^2} \;+\;
\frac{\beta y_0^2  b^2 \left( e^{2\beta T}-1\right) \left(\alpha -\beta\right)^2}
{\left[\left(\alpha b^2 + \beta \sigma^2 \right) \, e^{-\beta T} + \left(\beta - \alpha \right)b^2\, e^{\beta T}\right]^2} .
\end{equation}

In particular, when $b=0$ and $\alpha=\mu$ then the lower OU process becomes a deterministic lower bound and (\ref{I-two-OU}) 
 reduces indeed to the right hand side of (\ref{inf_ub_rate}), as it should.

Again, as in the proof of Theorem \ref{th:OU-1} we will show that the solution obtained corresponds to a global minimum using the fact that
$F:\bR^4 \rightarrow \bR$ is convex and appealing to Theorem 3.16 \cite[p.45]{B-M}.  
To establish the convexity of $F(x,x',y,y') := \frac{1}{2\sigma^2}(x'-\alpha x)^2 + \frac{1}{2 b^2}(y'-\beta y)^2$
we note that, for any $(x_0,x_0',y_0,y_0') \in \mathbb{R}^4$,
\begin{equation} \label{convex-F}
F(x,x',y,y') - F(x_0,x'_0,y_0,y'_0) \geq F_x^0 \, (x-x_0) + F_{x'}^0 \,(x'-x'_0) + F_y^0 \, (y-y_0) + F_{y'}^0 \,(y'-y'_0)
\end{equation}
where $F^0_x$ is shorthand for $F_x(x_0,x'_0,y_0,y_0')$ and similarly for the other three such quantities. The above inequality
is equivalent to
\begin{eqnarray*}
&& \frac{1}{2\sigma^2}\left(x'-\alpha x\right)^2 + \frac{1}{2b^2}\left(x'-\beta x\right)^2 - \frac{1}{2\sigma^2}\left(x'_0-\alpha x_0\right)^2
- \frac{1}{2b^2}\left(x'_0-\beta x_0\right)^2   \\
&& \hspace{1in} \; \geq \;  - \frac{\alpha}{\sigma^2}\left(x_0'-\alpha x_0\right)\, (x-x_0) + \frac{1}{\sigma^2} \left(x_0'-\alpha x_0\right)\, (x'-x'_0) \\
&& \hspace{1in} \;\;\; \;  - \frac{\beta}{b^2}\left(y_0'-\beta y_0\right)\, (y-y_0) + \frac{1}{b^2} \left(y_0'-\beta y_0\right)\, (y'-y'_0) .
\end{eqnarray*}
Elementary algebraic manipulations can show the above inequality to be true and therefore establish inequality (\ref{convex-F}) which implies the
convexity of $F$.

\vspace{0.05in}

We may thus summarize the above long derivation as follows.
\begin{theorem}
Consider the pair of Ornstein-Uhlenbeck SDE's depending on a parameter $\epsilon >0$
\begin{eqnarray*}
dX_t^\epsilon &=& \alpha X_t^\epsilon dt + \sqrt{\epsilon} \sigma dW_t, \hspace{0.3in} X_0^\epsilon = x_0,  \\
dY_t^\epsilon &=& \beta Y_t^\epsilon dt + \sqrt{\epsilon} b dB_t, \hspace{0.3in} Y_0^\epsilon =y_0 . %
\end{eqnarray*}
Assume that $0<y_0<x_0$ and $0<\beta < \alpha$. Let $T^\epsilon:=\inf\{t\geq 0: X_t^\epsilon = Y_t^\epsilon\}$ (with the standard convention that
$T^\epsilon=+\infty$ if the set is empty). Then
\[
\lim_{\epsilon \rightarrow 0} \epsilon \log \bP(T^\epsilon < \infty) \;=\; -I
\]
where $I$ is given by (\ref{I-two-OU}). If this rare event occurs then the meeting path followed by the two processes is given by (\ref{xt-second}) 
and the meeting time $T$ is the unique solution of (\ref{T-for-two-OU}).
\end{theorem}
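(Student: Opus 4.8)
The plan is to obtain the statement as a consequence of a two-dimensional sample-path Large Deviation Principle for the pair $(X^\epsilon,Y^\epsilon)$, together with the variational computation already carried out in this subsection; the one genuinely new point is the passage from a finite to an infinite time horizon. As in the proof of Theorem \ref{th:OU-1}, a Gronwall estimate shows that the solution map $(g_1,g_2)\mapsto(X^\epsilon,Y^\epsilon)$ on $C([0,S];\bR^2)$ is Lipschitz, so Theorem 5.6.7 of \cite{DZ} gives that the laws of $(X^\epsilon,Y^\epsilon)$ on $C([0,S];\bR^2)$ satisfy an LDP with the good rate function $I_S(x,y)=\int_0^S F(x,x',y,y')\,dt$ of (\ref{N1}), equal to $+\infty$ off the Cameron-Martin space with $x(0)=x_0$, $y(0)=y_0$.

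I would then dispose of the finite-horizon meeting problem, mirroring the three-part structure of the proof of Theorem \ref{th:OU-1}. For a fixed horizon $S$, the set $A_S:=\{(x,y)\in C([0,S];\bR^2): x(s)=y(s)\text{ for some }s\in[0,S]\}$ is closed; any minimizer of $I_S$ over $A_S$ crosses the diagonal, and it does so transversally when the meeting occurs at the optimal time --- this is exactly the content of (\ref{xyprime}) --- so that paths crossing strictly inside $(0,S)$ approximate the optimum from within $A_S^{\,o}$ and hence $\inf_{A_S^{\,o}}I_S=\inf_{\overline{A_S}}I_S=:I(S)$, whence the LDP yields $\lim_{\epsilon\to0}\epsilon\log\bP(T^\epsilon\le S)=-I(S)$. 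Here $I(S)=\inf_{0<s\le S}\mathcal I(s)$, where $\mathcal I(s)$ is the least action over admissible paths meeting exactly at time $s$, and $\mathcal I(s)$ is computed precisely as above: the Euler-Lagrange equations force $x=C_1e^{\alpha t}+C_2e^{-\alpha t}$ and $y=C_3e^{\beta t}+C_4e^{-\beta t}$; convexity of $F:\bR^4\to\bR$ (Theorem 3.16 of \cite{B-M}) makes such a path the global minimizer for its own endpoints; the stationarity condition in the free variable $s$ is the pair of transversality conditions (\ref{N-conditions-4})--(\ref{N-conditions-5}), which reduce through (\ref{N4}) and (\ref{N6}) to the meeting equation (\ref{T-for-two-OU}); the minimizing path is (\ref{xt-second}) with value (\ref{I-two-OU}); and the computation following (\ref{xyprime}) verifies the path inequality $x>y$ on $[0,s)$. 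Since $\mathcal I(0^+)=+\infty$, $\mathcal I$ is smooth on $(0,\infty)$ with a single stationary point at the unique root $T$ of (\ref{T-for-two-OU}), and $\mathcal I(s)\to\frac{\alpha x_0^2}{\sigma^2}+\frac{\beta y_0^2}{b^2}=:\mathcal I(\infty)$ as $s\to\infty$, it follows that $\mathcal I$ is strictly decreasing on $(0,T)$ and strictly increasing on $(T,\infty)$; hence $I(S)=\mathcal I(S\wedge T)$, and in particular $I(S)=I$ (the right-hand side of (\ref{I-two-OU})) for every $S\ge T$.

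The lower bound in the infinite-horizon statement is then immediate: $\bP(T^\epsilon<\infty)\ge\bP(T^\epsilon\le S)$ gives $\liminf_{\epsilon\to0}\epsilon\log\bP(T^\epsilon<\infty)\ge-I(S)=-I$ for any $S\ge T$. The matching upper bound is the main obstacle, since the LDP above only controls finite horizons. I would fix $N>T$ and split $\bP(T^\epsilon<\infty)\le\bP(T^\epsilon\le N)+\bP(N<T^\epsilon<\infty)$: the first term has rate $I(N)=I$ by the preceding step, so it remains to show the second term is exponentially negligible compared with $e^{-I/\epsilon}$. For this I would use the explicit solutions $X^\epsilon_t=e^{\alpha t}\bigl(x_0+\sqrt\epsilon\,\sigma\int_0^t e^{-\alpha u}dW_u\bigr)$ and $Y^\epsilon_t=e^{\beta t}\bigl(y_0+\sqrt\epsilon\,b\int_0^t e^{-\beta u}dB_u\bigr)$: the driving stochastic integrals are Gaussian martingales with uniformly bounded quadratic variation --- time-changed Brownian motions whose running infima over $[N,\infty)$ converge almost surely to their terminal values --- while the deterministic gap $x_0e^{\alpha t}-y_0e^{\beta t}$ separates exponentially, so a meeting after time $N$ forces the driving martingales to make negative excursions of order $1/\sqrt\epsilon$; a reflection-principle Gaussian tail estimate then bounds $\bP(N<T^\epsilon<\infty)$ by $C_N\exp(-c_N/\epsilon)$ with $c_N\to\mathcal I(\infty)$ as $N\to\infty$. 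Since $\mathcal I(\infty)>I$ --- the strict inequality holds because the diagonal is reachable at a finite time by letting both processes decay, and this is exactly why the loss-of-compactness phenomenon mentioned earlier does not occur here --- choosing $N$ large enough that $c_N>I$ makes the second term subdominant, so $\limsup_{\epsilon\to0}\epsilon\log\bP(T^\epsilon<\infty)\le-I$; carrying this tail estimate out uniformly in $\epsilon$ is the principal technical step of the proof. Combining the two bounds gives $\lim_{\epsilon\to0}\epsilon\log\bP(T^\epsilon<\infty)=-I$, and on the rare event $\{T^\epsilon<\infty\}$ the overwhelmingly most likely trajectory is the minimizer (\ref{xt-second}) with meeting time the root $T$ of (\ref{T-for-two-OU}).
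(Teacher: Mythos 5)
Your proposal follows the same variational route as the paper (the two-dimensional action functional from Theorem~5.6.7 of \cite{DZ}, the Euler--Lagrange equations, the two transversality conditions, convexity of $F$ via Theorem~3.16 of \cite{B-M}, and the check $x>y$ on $[0,T)$ via (\ref{xyprime})), so in substance you and the authors arrive at (\ref{xt-second}), (\ref{T-for-two-OU}), and (\ref{I-two-OU}) by the same computation. What you add --- and the paper does not --- is an explicit argument for the passage from the finite-horizon LDP to the infinite-horizon statement about $\bP(T^\epsilon<\infty)$: the easy lower bound by monotonicity, the resolution of the open-vs-closed issue via transversal crossing, and the upper bound by splitting at a large time $N$ and killing the tail with a Gaussian estimate on the driving martingales $\int_0^t e^{-\alpha u}\,dW_u$ and $\int_0^t e^{-\beta u}\,dB_u$ (whose quadratic variations are uniformly bounded). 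That splitting step is exactly what is needed to make the theorem rigorous, and the paper essentially treats the derivation of the critical path and rate as the proof, so your version is genuinely more careful.

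There is, however, a concrete error in the way you justify the tail estimate. You assert $\mathcal I(s)\to\frac{\alpha x_0^2}{\sigma^2}+\frac{\beta y_0^2}{b^2}$ as $s\to\infty$, which is what one gets by formally letting $T\to\infty$ in (\ref{I-two-OU}). But (\ref{I-two-OU}) is valid only at the stationary $T$ of (\ref{T-for-two-OU}); it was derived using \emph{both} transversality conditions (\ref{N-conditions-4}) and (\ref{N-conditions-5}), the second of which is the stationarity-in-$s$ condition and hence must be dropped when computing the fixed-$s$ quantity $\mathcal I(s)$. Doing the fixed-$s$ minimization directly, the meeting value $m$ is free, and the costs are $\frac{\alpha}{\sigma^2}\,\frac{(x_0e^{\alpha s}-m)^2}{e^{2\alpha s}-1}$ and $\frac{\beta}{b^2}\,\frac{(y_0e^{\beta s}-m)^2}{e^{2\beta s}-1}$; since the weight on the $Y$-deviation dominates for large $s$ (because $e^{2\alpha s}\gg e^{2\beta s}$), the optimal $m$ tends to $y_0e^{\beta s}$, the $Y$-cost vanishes, and $\mathcal I(s)\to\frac{\alpha x_0^2}{\sigma^2}$. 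So the correct value of $\mathcal I(\infty)$ is $\frac{\alpha x_0^2}{\sigma^2}$, and correspondingly the exponent $c_N$ produced by your reflection-principle bound (which forces only the $X$-martingale to make the $O(1/\sqrt\epsilon)$ excursion) converges to $\frac{\alpha x_0^2}{\sigma^2}$, not to the sum. The structure of your argument survives --- one still has $\mathcal I(\infty)>I$ because $I=\mathcal I(T)$ is the unique interior minimum of $\mathcal I$ --- but the numerical claim should be corrected, and the appeal to (\ref{I-two-OU}) as a formula for $\mathcal I(s)$ at arbitrary $s$ should be removed.

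Two smaller points you should make explicit if you write this up in full. First, the strict unimodality of $s\mapsto\mathcal I(s)$ is asserted but not proved; the uniqueness of the root of (\ref{T-for-two-OU}) shown in the paper gives uniqueness of the stationary point, but you also need $\mathcal I(0^+)=+\infty$ and finiteness of $\mathcal I(\infty)$ to conclude decreasing-then-increasing, and those endpoints should be verified rather than taken for granted. Second, the uniformity in $\epsilon$ of the tail estimate, which you correctly identify as the principal technical step, is genuinely needed and is not a triviality: you must control the $O(\sqrt\epsilon)$ fluctuation of $y_0+\sqrt\epsilon\,b\int_0^t e^{-\beta u}\,dB_u$ simultaneously with the large negative excursion of the $X$-martingale, e.g.\ by a union bound that sacrifices an arbitrarily small amount in the exponent.
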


\section{Geometric Brownian Motion} \label{sec:GBM}

In this section, an analysis of the problems we examined for the Ornstein-Uhlenbeck process is repeated for the Geometric Brownian motion. The
approach followed and the techniques used are analogous to those of section 2. The reason for treating the Geometric Brownian motion
in some detail here is due to its great importance in applications but also to the fact that in this case an analytic solution for the types
of ruin problems we consider can be obtained. As a result, the accuracy and merit of the large deviation estimates we obtain may be gauged.
This is carried out in this section.

\subsection{The Finite Horizon Problem}
Suppose that $\{X_t;t\geq 0\}$ is a Geometric Brownian motion satisfying the Stochastic Differential Equation
\begin{equation} \label{GBM-th1}
dX_t = \mu X_t dt + \sigma X_t dW_t, \hspace{0.3in} X_0 = x_0 \; \mbox{w.p. 1.}
\end{equation}
As is well known this has the closed form solution
\begin{equation} \label{GBM-th2}
X_t = x_0 e^{(\mu-\frac{1}{2}\sigma^2)t + \sigma W_t}.
\end{equation}
Let $u_0 >x_0$ and $\alpha>\mu$. Then the event $\{X_t \geq u_0 e^{at} \mbox{ for some } t\leq T\}$ is an event whose
probability goes to 0 as $\sigma \rightarrow 0$. Our goal is to obtain low variance Wentzell-Freidlin asymptotics for
this finite horizon hitting probability. For reasons of notational compatibility we introduce the {\em parametrized process}
\begin{equation} \label{GBM-the3}
dX_t^\epsilon = \mu X_t^\epsilon dt + \sqrt{\epsilon} \sigma X_t^\epsilon dW_t, \hspace{0.3in} X_0^\epsilon = x_0 \; \mbox{w.p. 1.}
\end{equation}

\begin{theorem}   \label{Th:GBM1}
For the parametrized process $\{X_t^\epsilon\}$,
\begin{equation} \label{GBM-th4}
\lim_{\epsilon \rightarrow 0} \epsilon \log \mathbb{P}\left(\sup_{0\leq t \leq T}\left(X_t^\epsilon - u_0 e^{\alpha t} \right) \geq 0 \right) \;=\; - I(T) .
\end{equation}
The {\em rate function} $I(T)$ is given by
\begin{equation}
I(T) := \min_{0\leq t \leq T} J_*(t)
\end{equation}
where $J_*(t)$ is solution to the minimization problem
\begin{equation} \label{GBM-th5}
J_*(t) \;=\;
\min\left\{ J(x,t)\;:\; x \in \mathcal{H},\; x(0)=x_0, \; x(t)=u_0e^{\alpha t}, \; x(s) < u_0 e^{\alpha s}, \;
s \in [0,t) \right\}.
\end{equation}
where $\mathcal{H}= \left\{h:[0,t]\rightarrow \mathbb{R} : h(s) = h(0) + \int_0^s \phi(\xi)d\xi \;, \; s\in[0,t],\; \phi\in L^2[0,t] \right\}$
and $J(x,t)$ is the {\em action functional}
\begin{equation} \label{GBM-th6}
J(x,t) := \frac{1}{2} \int_0^t \left( \frac{x'(s) - \mu x(s)}{\sigma x(s)} \right)^2 ds 
= \frac{1}{2\sigma^2}  \int_0^t \left( (\log x(s))'-\mu \right)^2 ds.
\end{equation}
\end{theorem}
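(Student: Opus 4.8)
The plan is to remove the multiplicative noise by a logarithmic change of variables, reducing the problem to a Brownian motion with constant drift, for which Schilder's theorem supplies the sample--path LDP directly. Setting $Y_t^\epsilon := \log X_t^\epsilon$, It\^o's formula applied to (\ref{GBM-the3}) gives
\[
dY_t^\epsilon \;=\; \left(\mu - \tfrac{1}{2}\epsilon\sigma^2\right)dt + \sqrt{\epsilon}\,\sigma\,dW_t, \hspace{0.3in} Y_0^\epsilon = \log x_0,
\]
so that $Y_t^\epsilon = \log x_0 + (\mu-\tfrac12\epsilon\sigma^2)t + \sqrt{\epsilon}\,\sigma W_t$ is Gaussian. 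The drift correction $-\tfrac12\epsilon\sigma^2 t$ is $O(\epsilon)$ uniformly on $[0,T]$, hence $\{Y^\epsilon\}$ is exponentially equivalent to $\{\log x_0 + \mu t + \sqrt{\epsilon}\,\sigma W_t\}$; by Schilder's theorem together with the contraction principle applied to the continuous affine map sending $w$ to the function $t\mapsto \log x_0 + \mu t + \sigma w(t)$ on $C[0,T]$, the family $\{Y^\epsilon\}$ satisfies an LDP on $C[0,T]$ with good rate function $\tilde{I}(y) = \frac{1}{2\sigma^2}\int_0^T (y'(s)-\mu)^2\,ds$ for $y$ absolutely continuous with $y(0)=\log x_0$ and square--integrable derivative, and $\tilde I(y)=+\infty$ otherwise.

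Next I would translate the hitting event. Since $\exp$ is strictly increasing,
\[
\left\{\sup_{0\leq t\leq T}\left(X_t^\epsilon - u_0e^{\alpha t}\right)\geq 0\right\} = \{Y^\epsilon\in\Phi\}, \hspace{0.2in} \Phi := \left\{y\in C[0,T]: y(t)\geq \log u_0 + \alpha t \mbox{ for some } t\in[0,T]\right\}.
\]
The set $\Phi$ is closed in the sup norm, with interior $\Phi^o = \{y: y(t) > \log u_0 + \alpha t \mbox{ for some } t\}$. Applying the LDP upper and lower bounds,
\[
-\inf_{\Phi^o}\tilde I \;\leq\; \liminf_{\epsilon\to0}\epsilon\log\bP(Y^\epsilon\in\Phi) \;\leq\; \limsup_{\epsilon\to0}\epsilon\log\bP(Y^\epsilon\in\Phi) \;\leq\; -\inf_{\overline{\Phi}}\tilde I = -\inf_{\Phi}\tilde I.
\]
One then checks $\inf_{\Phi^o}\tilde I = \inf_\Phi\tilde I$: given a $\delta$--near--optimal $y^*\in\Phi$ with $\tilde I(y^*)<\infty$, the tilted path $y^*(s)+\lambda s$ lies in $\Phi^o$ for every $\lambda>0$ and has action tending to $\tilde I(y^*)$ as $\lambda\downarrow 0$. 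Hence the limit in (\ref{GBM-th4}) exists and equals $-\inf_\Phi\tilde I$.

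It remains to identify $\inf_\Phi\tilde I$ with $I(T)$. For $y\in\Phi$ put $t^* := \inf\{t\in[0,T]: y(t)\geq \log u_0+\alpha t\}$, which is strictly positive since $x_0<u_0$; by continuity $y(t^*)=\log u_0+\alpha t^*$ and $y(s)<\log u_0+\alpha s$ for $s<t^*$, so $\tilde I(y)\geq \frac{1}{2\sigma^2}\int_0^{t^*}(y'-\mu)^2\,ds \geq \tilde J_*(t^*)$, where $\tilde J_*(t)$ denotes the infimum of $\frac{1}{2\sigma^2}\int_0^t(y'-\mu)^2\,ds$ over $y$ with $y(0)=\log x_0$, $y(t)=\log u_0+\alpha t$ and $y(s)<\log u_0+\alpha s$ on $[0,t)$. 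Conversely, any such path extended by the drift $y'=\mu$ on $[t,T]$ (which adds nothing to the action) belongs to $\Phi$, so $\inf_\Phi\tilde I = \inf_{0<t\leq T}\tilde J_*(t)$. Finally, the substitution $x=e^y$ is a bijection between the admissible $y$'s and the admissible $x$'s in (\ref{GBM-th5}) (monotonicity of $\exp$ preserving $x(s)<u_0e^{\alpha s}$), and $(y'-\mu)^2 = ((\log x)'-\mu)^2 = \big((x'/x)-\mu\big)^2$, so $\tilde J_*(t)=J_*(t)$ as defined in (\ref{GBM-th5})--(\ref{GBM-th6}). Since $J_*(t)\to+\infty$ as $t\downarrow 0$ (a fixed gap covered in vanishing time forces unbounded action, by Cauchy--Schwarz) while $J_*$ is continuous on $(0,T]$, the infimum is attained, giving $\inf_\Phi\tilde I = \min_{0\leq t\leq T}J_*(t) = I(T)$, which completes the proof.

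The step I expect to demand the most care is the transfer from the LDP to the asserted limit: the exponential--equivalence argument discarding the $O(\epsilon)$ drift term, and more delicately the verification that $\inf_{\Phi^o}\tilde I = \inf_{\overline\Phi}\tilde I$, i.e.\ that opening up the constraint set does not lower the infimum. One could instead apply the Wentzell--Freidlin theorem directly to the multiplicative--noise SDE (\ref{GBM-the3}), but then the degeneracy of the diffusion coefficient $\sigma x$ at $x=0$ must be dealt with; the logarithmic substitution sidesteps this, which is why I prefer it. The remaining steps — the reduction to the first hitting time and the change of variables — parallel the corresponding parts of the proof of Theorem \ref{th:OU-1} and raise no new issue; in particular, since the theorem asks only for the variational characterisation $I(T)=\min_t J_*(t)$, there is no need here to solve the Euler--Lagrange problem or to establish global optimality of its solution.
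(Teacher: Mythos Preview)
Your argument is correct and is in fact considerably more careful than the paper's own treatment. The paper simply declares that the theorem ``is of course a consequence of the Wentzell--Freidlin theory'' and then moves straight to computing the optimal path and the explicit closed forms of $J_*(t)$ and $I(T)$; it does not verify the hypotheses of the sample--path LDP, and in particular it passes over the degeneracy of the diffusion coefficient $\sigma x$ at $x=0$ that you rightly flag. Your route --- the logarithmic substitution reducing the problem to Brownian motion with constant drift, Schilder's theorem plus exponential equivalence for the $O(\epsilon)$ It\^o correction, and the explicit verification that $\inf_{\Phi^o}\tilde I=\inf_{\overline\Phi}\tilde I$ via the tilting $y^*\mapsto y^*+\lambda\,\cdot$ --- is both cleaner and more rigorous, and it sidesteps the degeneracy entirely. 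What the paper supplies beyond your argument is the explicit solution of the variational problem: it observes that $J(x,t)$ is minimised when $(\log x)'$ is constant, obtains the exponential extremal $x(s)=x_0e^{cs}$ with $c=\alpha+t^{-1}\log(u_0/x_0)$, computes $J_*(t)=\frac{t}{2\sigma^2}(\alpha-\mu+t^{-1}\log(u_0/x_0))^2$, and minimises over $t$ to get a closed formula for $I(T)$. As you note, that computation lies outside the theorem as stated, so your proof is complete without it.
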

This theorem is of course a consequence of the Wentzell-Freidlin theory. The minimizing path $x(t)$ can be easily obtained in this case
either using the full machinery of the Euler-Lagrange differential equations, or simply by observing that the functional $J(x,t)$ is minimized
when $(\log x)'$ is constant, say $c$, or equivalently when $\log x(s) =B+ cs$ for some $B \in \bR$ and 
$s \in [0,t]$. This in turn implies that $x(s) = K e^{cs}$ with $x(0)=x_0 = K$ and
$x(t) = x_0 e^{ct} = u_0 e^{\alpha t}$ whence we conclude that the function that minimizes the action functional under the boundary conditions is
\begin{equation} \label{GBM-th7}
x(t) = x_0 e^{ct} \hspace{0.2in} \mbox{where} \;\;\; c = \alpha + \frac{1}{t} \log \frac{u_0}{x_0}.
\end{equation}
It is easy to see that the above path satisfies the constraint $x(s) < u_0 e^{\alpha s}$ for $s \in [0,t)$. The corresponding minimum
action is then
\[
J_*(t) \;=\; \frac{t}{2\sigma^2} \left( \alpha - \mu + \frac{1}{t} \log \frac{u_0}{x_0}  \right)^2
\]
or
\[
J_*(t) \;=\; t\,\frac{(\alpha -\mu)^2}{2\sigma^2} + 2 \frac{(\alpha-\mu)  \log \frac{u_0}{x_0} }{2\sigma^2}
+ \frac{1}{t} \, \frac{(\log \frac{u_0}{x_0})^2}{2\sigma^2}.
\]
The value of $t$ that minimizes the above expression is
\[
t_{\text{min}} = \frac{ \log \frac{u_0}{x_0}}{\alpha -\mu}
\]
and the corresponding minimum is
\[
\frac{ 2 (\alpha -\mu) \log\frac{u_0}{x_0}}{\sigma^2}.
\]
Thus the rate function is
\begin{equation} \label{GBM-th8}
I(T) = \left\{ \begin{array}{lll} \frac{ 2 (\alpha -\mu) \log\frac{u_0}{x_0}}{\sigma^2} & \mbox{if}  & t_{\text{min}}
< T \\ & & \\
\frac{T}{2\sigma^2} \left( \alpha - \mu + \frac{1}{T} \log \frac{u_0}{x_0}  \right)^2 & \mbox{if} & t_{\text{min}} \geq T
\end{array} \right.
\end{equation}
and, based on Theorem \ref{Th:GBM1}, we conclude that
\begin{equation}  \label{GBM-th9}
-\log \mathbb{P}\left(\sup_{0\leq t \leq T} (X_t - u_0e^{at}) \geq 0 \right) \approx I(T).
\end{equation}
{\em The above approximation is satisfactory provided that $\sigma$ is sufficiently small.} We assess its quality in the next
subsection taking advantage of the fact that an exact, closed form solution also exists in this situation.

\subsection{The exact solution}
Consider the GBM $X_t = x_0 e^{(\mu - \frac{1}{2}\sigma^2)t + \sigma W_t}$ and the corresponding finite horizon hitting probability
\[
p_T \; :=\; \bP\left(\sup_{0 \leq t\leq T} (X_t - u_0 e^{a t}) \geq 0\right)
\]
where, as before $\alpha > \mu$ and $0 <x_0< u_0$. Since the event $(X_t - u_0 e^{\alpha t}) \geq 0$ is the same as $X_t e^{-\alpha t} -u_0 \geq 0$,
we will determine, equivalently the probability
\begin{eqnarray} \nonumber
p_T &=& \bP\left(\sup_{0 \leq t\leq T}  x_0 e^{(\mu - \frac{1}{2}\sigma^2-a)t + \sigma W_t} \geq u_0 \right) \;=\;
\bP\left( \sup_{0 \leq t\leq T} (\mu - \frac{1}{2}\sigma^2-\alpha)t + \sigma W_t \geq \log\frac{u_0}{x_0} \right) \\  \nonumber %
&& \hspace{-0.0in} = 1-  \Phi\left( \frac{\log\left(\frac{u_0}{x_0}\right) - (\mu-\alpha -\frac{1}{2}\sigma^2)T}{\sigma \sqrt{T}} \right) \\  \label{the-exact-solution}
&& \hspace{0.3in} + e^{\frac{2}{\sigma^2} (\mu-\alpha -\frac{1}{2}\sigma^2)\log\left(\frac{u_0}{x_0}\right)}
  \Phi\left( \frac{-\log\left(\frac{u_0}{x_0}\right) - (\mu-\alpha -\frac{1}{2}\sigma^2)T}{\sigma \sqrt{T}} \right)
\end{eqnarray}
Here, $\Phi(x):=\int_{-\infty}^x \frac{1}{\sqrt{2\pi}} \, e^{-\frac{1}{2}u^2} \,du$, the standard normal distribution function.
The above exact formula for $p_T$ allows us to evaluate the accuracy of the approximation (\ref{GBM-th9}).
Figure \ref{fig:GBM-3} shows again $- \sigma^2 \log p_T$ together with the Wentzell-Freidlin
asymptotic result when $\sigma \rightarrow 0$. One may see that approximation (\ref{GBM-th9}) may be considered satisfactory, provided that $\sigma$ is small.

\subsection{The Infinite Horizon Problem}
The exact value of the infinite horizon hitting probability can be obtained from (\ref{the-exact-solution}) by letting $T\rightarrow \infty$. This gives
\begin{eqnarray*}
\lim_{T\rightarrow \infty} p_T &=:&  p_\infty \;=\;
\exp\left( \frac{2}{\sigma^2}(\mu - \frac{1}{2}\sigma^2-\alpha)\log\frac{u_0}{x_0} \right).  
\end{eqnarray*}
Returning to the parametrized version of the problem, concerning the family of processes $\{X_t^\epsilon\}$ defined in (\ref{GBM-the3}), the corresponding infinite horizon
hitting probability is
\[
p_\infty^\epsilon \;=\; \exp\left( \frac{2}{\epsilon \sigma^2}(\mu - \frac{1}{2}\epsilon \sigma^2-\alpha)\log\frac{u_0}{x_0} \right)
\]
and therefore
\begin{equation} \label{GBM-infinite-horizon}
\lim_{\epsilon\rightarrow 0} \epsilon \log p_\infty^\epsilon \;=\; -\frac{2}{\sigma^2}(\alpha-\mu)\log\frac{u_0}{x_0}.
\end{equation}
This, as we will see, is the same as the result obtained from Wentzell-Freidlin theory.
\begin{theorem}  \label{Th:GBM2}
For the parametrized process $\{X_t^\epsilon\}$,
\begin{equation} \label{GBM-th4b}
\lim_{\epsilon \rightarrow 0} \epsilon \log \mathbb{P}\left(\sup_{ t \geq 0}\left(X_t^\epsilon - u_0 e^{\alpha t} \right) \geq 0 \right) \;=\; - I(\infty)
\end{equation}
where the {\em rate function} $I(\infty)$ is the solution to the infinite horizon variational problem
\begin{equation}  \label{GBM-th10}
\inf\left\{ J(x,T) \; : \;
x\in \mathcal{H},\; x(s)<u_0e^{\alpha s},  0\leq s < T,\; x(0)=x_0,\; x(T)=u_0e^{\alpha T} \right\}
\end{equation}
where $J(x,t):=\frac{1}{2} \int_0^t \left(\left( \log x(u) \right)' - \mu \right)^2 du$ and $\mathcal{H}$ is again the Cameron-Martin
space of absolutely continuous functions with square-integrable derivatives. In fact, the rate function for the infinite horizon problem is
\begin{equation} \label{GBM-rate}
I(\infty) = 2 \frac{\alpha - \mu}{\sigma^2} \, \log \frac{u_0}{x_0},
\end{equation}
the optimal time horizon is
\begin{equation} \label{Trans-4}
T = \frac{ \log \frac{u_0}{x_0} }{ \alpha - \mu},
\end{equation}
and the {\em optimal path that achives the minimum} is
\begin{equation} \label{GBM-Optimal-Path}
x^*(t) = x_0 e^{{2\alpha - \mu}t}, \;\;\; t\in[0,T].
\end{equation}
\end{theorem}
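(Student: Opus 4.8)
The plan is to treat (\ref{GBM-th4b}) as the infinite-horizon counterpart of Theorem~\ref{Th:GBM1}: first I would set up and solve the moving-boundary variational problem (\ref{GBM-th10}) by means of the Euler--Lagrange equation together with a transversality condition, exactly as in Section~2, extracting $I(\infty)$, the optimal horizon $T$, and the optimal path; then I would justify the passage from the finite to the infinite horizon by combining Theorem~\ref{Th:GBM1} with a short tail estimate, and cross-check the value against the exact formula (\ref{GBM-infinite-horizon}).

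For the variational problem the natural move is the substitution $u=\log x$, under which $J(x,T)=\frac{1}{2\sigma^2}\int_0^T(u'(s)-\mu)^2\,ds$. With $T$ and the endpoints $u(0)=\log x_0$, $u(T)=\log(u_0 e^{\alpha T})$ fixed, convexity of $\xi\mapsto\xi^2$ (Jensen) forces the minimizer to be affine, $u(s)=\log x_0+cs$, i.e.\ $x(s)=x_0 e^{cs}$, with $c$ and $T$ tied by the terminal condition $x_0 e^{cT}=u_0 e^{\alpha T}$. The remaining freedom in $T$ is removed by the transversality condition $F+(U'-x')F_{x'}=0$ at $T$ with $U(t)=u_0 e^{\alpha t}$; a routine computation factors this as $(c-\mu)(2\alpha-c-\mu)=0$. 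The root $c=\mu$ is discarded, since the deterministic path $x_0 e^{\mu t}$ never meets $u_0 e^{\alpha t}$ (as $\alpha>\mu$, $u_0>x_0$), leaving $c=2\alpha-\mu$; the terminal condition then gives $e^{(\alpha-\mu)T}=u_0/x_0$, which is (\ref{Trans-4}), and hence the optimal path (\ref{GBM-Optimal-Path}). Substituting back, $I(\infty)=\frac{T}{2\sigma^2}(c-\mu)^2=\frac{2(\alpha-\mu)}{\sigma^2}\log\frac{u_0}{x_0}$, which is (\ref{GBM-rate}). I would also check that $x^*$ obeys the path inequality $x^*(s)<u_0 e^{\alpha s}$ on $[0,T)$ — equivalent to $s<T$ — and that the critical point is a global minimum: for fixed $T$ affinity of $u$ is globally optimal by convexity, while over $T>0$ the explicit function $J_*(T)$ of Theorem~\ref{Th:GBM1}, namely $\frac{T(\alpha-\mu)^2}{2\sigma^2}+\frac{(\alpha-\mu)}{\sigma^2}\log\frac{u_0}{x_0}+\frac{1}{2\sigma^2 T}\bigl(\log\frac{u_0}{x_0}\bigr)^2$, is strictly convex and minimized at $t_{\min}=\frac{\log(u_0/x_0)}{\alpha-\mu}$ by the arithmetic--geometric mean inequality applied to its first and third terms.

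For the log-asymptotic identity, the lower bound is immediate: since $\{\sup_{t\ge0}(X_t^\epsilon-u_0 e^{\alpha t})\ge0\}\supseteq\{\sup_{0\le t\le T}(X_t^\epsilon-u_0 e^{\alpha t})\ge0\}$ for every $T$, Theorem~\ref{Th:GBM1} gives $\liminf_{\epsilon\to0}\epsilon\log p_\infty^\epsilon\ge-I(T)$ for all $T$, hence $\ge-\inf_{T>0}I(T)=-I(\infty)$, the infimum being attained at $t_{\min}$ by the previous paragraph. For the matching upper bound I would fix $M>t_{\min}$ and split according to whether the boundary is first reached by time $M$ or later: on $[0,M]$ Theorem~\ref{Th:GBM1} yields $-I(M)=-I(\infty)$ (the finite-horizon rate has already stabilized for $M>t_{\min}$, cf.\ (\ref{GBM-th8})), while reaching the boundary only after time $M$ costs at least $\inf_{t>M}J_*(t)=J_*(M)>I(\infty)$, hence is exponentially subdominant. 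Equivalently — and this is the cross-check this section is built around — the exact infinite-horizon probability (\ref{GBM-infinite-horizon}) already equals $-\tfrac{2}{\sigma^2}(\alpha-\mu)\log\tfrac{u_0}{x_0}$ in the $\epsilon\log$ limit, in agreement with (\ref{GBM-rate}).

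The one genuinely delicate point is the upper bound over the \emph{unbounded} time horizon, i.e.\ the tail estimate that hitting the boundary only after a long time is exponentially expensive; in general infinite-horizon problems the rate function can fail to be good and no minimizer need exist, a phenomenon the authors flag in the Ornstein--Uhlenbeck discussion. Here it causes no trouble because $J_*(t)\to+\infty$ both as $t\downarrow0$ and as $t\uparrow\infty$, so the minimizer $t_{\min}$ is a well-separated interior point and the tail is strictly subdominant; and in any case the explicit solution (\ref{GBM-infinite-horizon}) available for the geometric Brownian motion lets one bypass the issue entirely, which is precisely why this model is treated in detail in this section.
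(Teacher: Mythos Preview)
Your proposal is correct and follows essentially the same route as the paper: set up the variable-endpoint variational problem, obtain exponential extremals $x(t)=x_0e^{ct}$ from the Euler--Lagrange equation (you via the substitution $u=\log x$, the paper directly in $x$), apply the transversality condition $F+(U'-x')F_{x'}=0$ to get the factorization $(c-\mu)(2\alpha-c-\mu)=0$, discard the root $c=\mu$, and read off $T$, $x^*$, and $I(\infty)$, cross-checking against the exact formula. If anything you are more thorough than the paper, which does not explicitly verify the path-inequality constraint, global optimality, or the passage from finite to infinite horizon in this section.
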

Figure \ref{GBM-optimal-path} provides an illustration of the above result.
\begin{center}
\begin{figure}[!hb]
\includegraphics[width=5.0in]{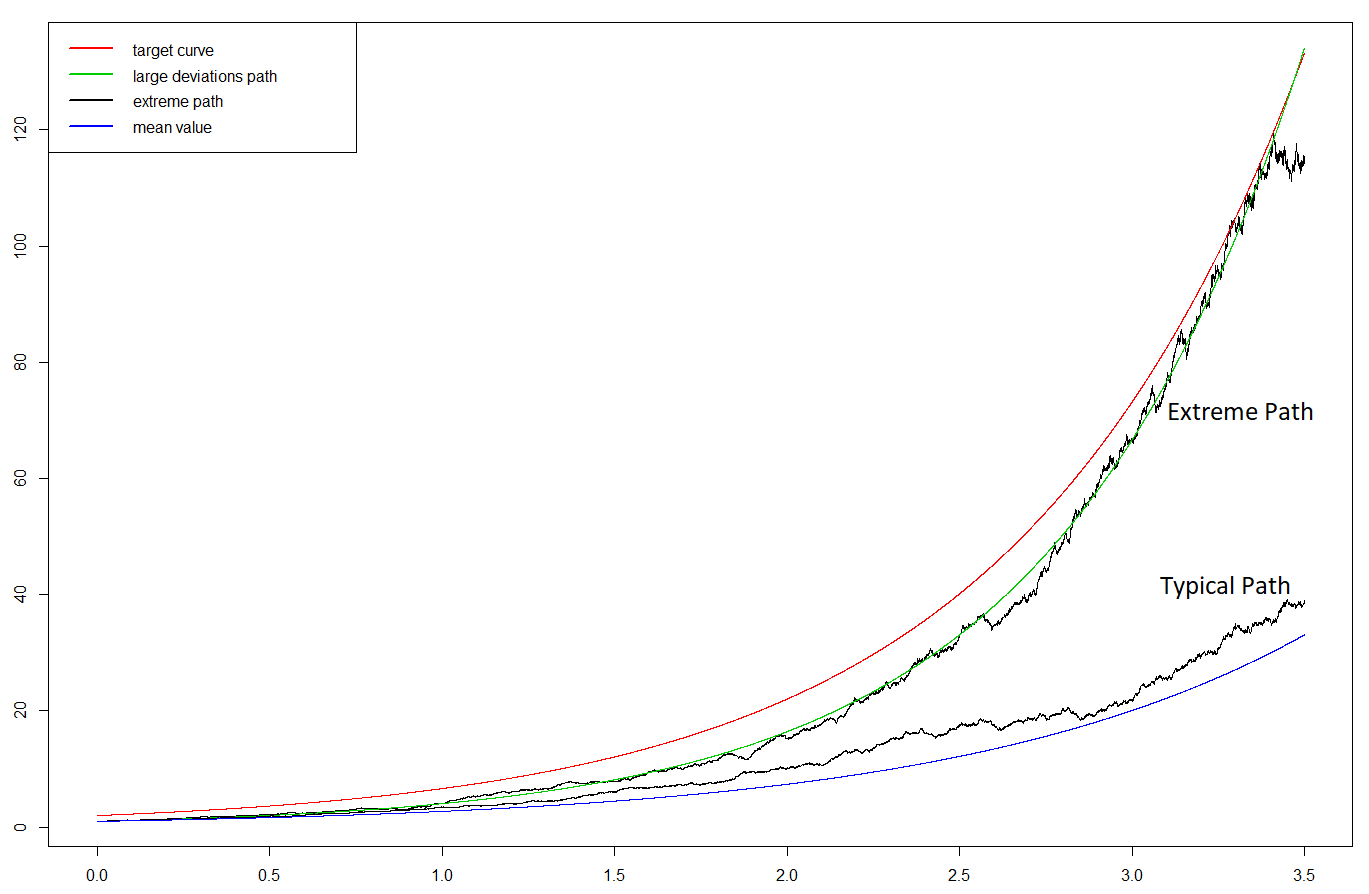}
\caption{Simulated sample path for $\alpha=1, x_0=1, u_0 =2$  and  $\sigma =0.15$. The red curve is the exponential target curve $u_0e^{\alpha t}$. The green curve
is optimal path predicted by Large Deviations theory and given by $x^*(t) = x_0 e^{(2\alpha - \mu)t}t$. Both a typical path and an extreme path of the Geometric
Brownian motion are displayed. The extreme path was generated by simulating a large number of paths ($\approx 10^5$) and selecting one that hit the target, i.e.\
reached the red curve. As expected it follows closely the green curve. The smaller the variance the smaller the probability of hitting the target and the closer the agreement with
the theoretical path.}
\label{GBM-optimal-path}
\end{figure}
\end{center}
The optimization problem of Theorem \ref{Th:GBM2} can of course be solved using the finite horizon analysis as a basis.
However we prefer to use standard techniques of the calculus of variations for infinite horizon problems with the final value of
the path constrained to lie on a prescribed curve using the {\em transversality conditions}
\begin{eqnarray}  \nonumber
&& \min \int_0^T F(x,x',t)dt, \hspace{0.2in} \mbox{ with boundary conditions } x(0)=x_0, \mbox{ and } x(T)=u(T) \\   \label{GBM-io1}
&& \mbox{with } F(x,x',t) = \frac{1}{2\sigma^2} \left( \frac{x'}{x} - \mu \right)^2.
\end{eqnarray}
In the above $u(t)=u_0 e^{\alpha t}$ is a given boundary curve with $x_0<u_0$ and $x$ is a $C^1[0,\infty)$ function which minimizes the
``action'' integral given the boundary conditions in (\ref{GBM-io1}). The conditions for a minimum is
\begin{eqnarray}
&& F_x - \frac{d}{dt} F_{x'} = 0 \label{GBM-EL}  \label{Euler-Lagrange}\\
&& x(0)=x_0 \;\;\; \mbox{ and } \;\;\; x(T)=u(T) \label{GBM-B1}   \\
&& F + (u'-x')F_{x'} =0 \;\; \mbox{ at } T. \label{Transversality-io}
\end{eqnarray}
The first equation is the Euler-Lagrange DE of the Calculus of Variations. Equation (\ref{Transversality-io}) is known as the {\em transversality condition}
resulting from the fact that the end time $T$ is not fixed but is itself to be chosen optimally, under the restriction that $x(T)=u(T)$.
Then
the Euler-Lagrange equation (\ref{Euler-Lagrange}) becomes
\[
\frac{2}{x^3} \left( (x')^2 -  x'' x \right) = 0
\]
or equivalently
\[
\frac{x'}{x} = \frac{x''}{x'} \;\; \Leftrightarrow \;\; \left(\log x' \right)' - \left(\log x \right)' =0
 \;\; \Leftrightarrow \;\; \log x' - \log x = c_1  \;\; \Leftrightarrow \;\; \frac{x'}{x} = \gamma.
\]
Hence
\begin{equation} \label{GBMxt}
x(t) = x_0 e^{\gamma t}.
\end{equation}
The transversality condition (\ref{Transversality-io}) reduces to
\[
\left( \frac{x'(T)}{x(T)} - \mu \right) \, \left( \frac{x'(T)}{x(T)} - \mu  + \left( u_0 \alpha e^{\alpha T} - x'(T) \right)\frac{2}{x(T)}  \right) =0
\]
and taking into account (\ref{GBMxt}) we obtain either $\mu = \gamma$ or
\[
\gamma - \mu + 2 \alpha \frac{u_0}{x_0} e^{(\alpha-\gamma)T} - 2 \gamma =0
\]
or
\begin{equation} \label{Trans-1}
2 \alpha \frac{u_0}{x_0} e^{(\alpha-\gamma)T} = \mu + \gamma .
\end{equation}
Equation (\ref{GBM-B1}) gives $x_0 e^{\gamma T} = u_0 e^{\alpha T}$ and therefore
\begin{equation} \label{Trans-2}
e^{(\alpha-\gamma)T} = \frac{x_0}{u_0}.
\end{equation}
From (\ref{Trans-1}) and (\ref{Trans-2}) we have
\begin{equation} \label{Trans-3}
\gamma = 2\alpha - \mu
\end{equation}


The solution of the variational process that
minimizes the action functional $I$ and satisfies the boundary conditions yields the optimal path 
$x_t=x_0e^{(2 \alpha -\mu) t}$ and the rate function
\[
I = 2 \frac{\alpha - \mu}{\sigma^2} \, \log \frac{u_0}{x_0} \;\; \mbox{ and } \;\; T = \frac{ \log \frac{u_0}{x_0} }{ \alpha - \mu}.
\]

It is worth pointing out that, in this case, a closed form analytic expression can also be obtained.
The solution of the SDE is $X_t^\epsilon = x_0 e^{(\mu - \frac{1}{2}\epsilon\sigma^2)t + \sqrt{\epsilon}\sigma W_t}$ and one may show that
\[
\lim_{\epsilon \rightarrow 0} \epsilon \log \bP\left(\sup_{t\geq 0} (X_t^\epsilon - u_0 e^{\alpha t}) \geq 0\right) \\
=-\frac{2}{\sigma^2}(\alpha-\mu)\log\frac{u_0}{x_0}.
\]
The exact solution agrees with the Wentzell-Freidlin asymptotic result. In Figure \ref{GBM-optimal-path} the
extreme path was selected by simulating a large number of paths and picking the largest among them.

\subsection{Two Correlated Geometric Brownian Motions} \label{sec:CGBM}

Suppose that $W_t$, $V_t$, are independent standard Brownian motions and $\rho \in [-1,1]$. Set
$B_t = \rho W_t + \sqrt{1-\rho^2} \, V_t$. Then $(W_t,B_t)$ are correlated Brownian motions with
correlation $\rho$. Consider now the processes
\begin{eqnarray*}
dX_t &=& \alpha X_t dt + \sigma X_t dW_t, \hspace{0.1in} X_0 = x_0,\\
dY_t &=& \beta Y_t dt + b Y_t dB_t, \hspace{0.1in} Y_0 = y_0.
\end{eqnarray*}
We will assume that $\alpha > \beta$ and $x_0 > y_0 >0$. Thus, in the absence of noise one would
have $X_t > Y_t$ for all $t>0$. In the presence of noise however the probability that $X_T = Y_T$
for some $T>0$ is non-zero. The second equation can be written equivalently as
\[
 dY_t = \beta Y_t dt + \rho b Y_t dW_t + \sqrt{1-\rho^2} b Y_t dV_t.
\]
Using once more Theorem 5.6.7 of \cite[p. 214]{DZ} we obtain again a two dimensional version of (\ref{Action}) for the action
functional to be minimized:
%
\begin{equation} \label{rate-correlatec}
I = \frac{1}{2} \int_0^T \left( \frac{x'-\alpha x}{x\sigma} \right)^2 + \frac{1}{1-\rho^2} \left( \frac{y'-\beta y}{y b}
- \rho \frac{x'-\alpha x}{x\sigma} \right)^2 dt
\end{equation}
This of course can be justified by appealing to the multidimensional version of (\ref{Action}) as we have already seen.
Set
\begin{equation} \label{F-function}
F = \frac{1}{2\sigma^2} \left( \frac{x'}{x} - \alpha \right)^2
+ \frac{1}{2(1-\rho^2)} \left( \frac{1}{b} \left(\frac{y'}{y}- \beta \right)- \frac{\rho}{\sigma} \left( \frac{x'}{x}-\alpha \right) \right)^2
\end{equation}
The conditions for minimum are
\begin{eqnarray}  \label{C1}
&& F_x - \frac{d}{dt} F_{x'} = 0  \\ \label{C2}
&& F_y - \frac{d}{dt} F_{y'} = 0  \\  \label{C3}
&& x(T)=y(T)  \\                      \label{C4}
&& F_{x'} + F_{y'} = 0 \;\; \mbox{ at } T,\\ \label{C5}
&& F - x'F_{x'} - y'F_{y'} =0 \;\; \mbox{ at } T.
\end{eqnarray}
Then, after some routine algebraic operations, (\ref{C1}) becomes
\[
\frac{1}{x} \left[ b^2\left( \frac{x''}{x} - \left(\frac{x'}{x}\right)^2 \right) - \rho b \sigma \left( \frac{y''}{y} - \left(\frac{y'}{y}\right)^2 \right)\right]=0
\]
which gives $b^2 (\log x)'' -\rho b \sigma (\log y)'' =0$. Similarly (\ref{C2}) gives
$\sigma^2 (\log y)'' -\rho b \sigma (\log x)'' =0$.
These equations together imply that $(\log x)'' = (\log y)'' =0$ whence we obtain $\frac{x'}{x} = c_1 $ and $\frac{y'}{y} = c_2$ for arbitrary $c_1$, $c_2$, and hence
\begin{equation} \label{sol1}
x(t) = x_0 e^{c_1 t}, \hspace{0.5in} y(t) = y_0 e^{c_2 t}.
\end{equation}
Condition (\ref{C3}) gives
\begin{equation} \label{sol2}
x_0 e^{c_1T} = y_0 e^{c_2T}.
\end{equation}
Taking into account that $\frac{x'}{x}=c_1$ and similarly $\frac{y'}{y}=c_2$, condition (\ref{C4}) gives
\[
\frac{1}{x_0e^{c_1T}} \left[ b^2 \left( c_1-\alpha\right) - \rho b \sigma \left(c_2 -\beta \right) \right] + \frac{1}{y_0 e^{c_2T}}
\left[ \sigma^2 ( c_2-\beta) - \rho b \sigma (c_1-\alpha) \right] =0
\]
Setting $u_1 = c_1-\alpha$, $u_2 = c_2 -\beta$, we rewrite the above
$b^2 u_1 - \rho b \sigma u_2 + \sigma^2 u_2 - \rho b \sigma u_1 =0$.
This gives
\begin{equation} \label{sol3}
u_2 = \lambda u_1 \;\;\; \mbox{with} \;\;\; \lambda = \frac{b}{\sigma} \frac{\rho \sigma -b}{\sigma -\rho b}
\end{equation}
Finally, from (\ref{C5}),
\[
b^2 u_1^2 + \sigma^2 u_2^2 - 2 \rho b \sigma u_1 u_2 - 2 c_1 \left[b^2 u_1 - \rho b \sigma u_2\right] - 2 c_2 \left[ \sigma^2 u_2 - \rho b \sigma u_1 \right] = 0
\]
or
\[
-u_1^2 \left[ b^2 + \sigma^2 \lambda^2 - 2 \rho b \sigma \lambda \right] + 2 u_1 \left[ -\alpha b^2 + \beta \rho b \sigma - \lambda \beta \sigma^2 + \lambda \alpha b \sigma \rho\right] =0.
\]
Besides the solution $u_1 = 0$ which means ($c_1 = \alpha$), we obtain
\[
u_1 = - \frac{2}{b^2+ \sigma^2 \lambda^2 - 2 \rho b\sigma \lambda} \left( \alpha b^2 + a \lambda \sigma^2 - \rho b \sigma (a+ \lambda \alpha) \right).
\]
After routine algebraic manipulations we obtain
\begin{equation} \label{sol4}
u_1 = 2(\beta-\alpha) \frac{\sigma(\sigma-\rho b)}{\sigma^2 + b^2 -2\rho b \sigma}, \;\;\;\;\;\;\;
u_2 = 2(\beta-\alpha) \frac{b(\rho\sigma- b)}{\sigma^2 + b^2 -2\rho b \sigma} .
\end{equation}
From (\ref{F-function}) and (\ref{sol4}), together with the definition of $u_1$, $u_2$,
\begin{eqnarray}   \label{F-function-2}
F &=& \frac{1}{2b^2\sigma^2(1-\rho^2)} \left[ b^2u_1^2 + \sigma^2 u_2^2 -2\rho b \sigma u_1u_2 \right]
\;=\; \frac{2(\beta-\alpha)^2}{\sigma^2+b^2 -2\rho b\sigma}.
\end{eqnarray}
Thus, since
\[
T = \frac{1}{\alpha-\beta} \, \log\left(\frac{x_0}{y_0}\right),
\]
the optimal rate is
\begin{equation} \label{Optimal-I}
I = \frac{2(\alpha-\beta) \log\left(\frac{x_0}{y_0}\right)}{\sigma^2+ b^2-2\rho b \sigma}.
\end{equation}
\begin{center}
\begin{figure}
\includegraphics[width=4.8in]{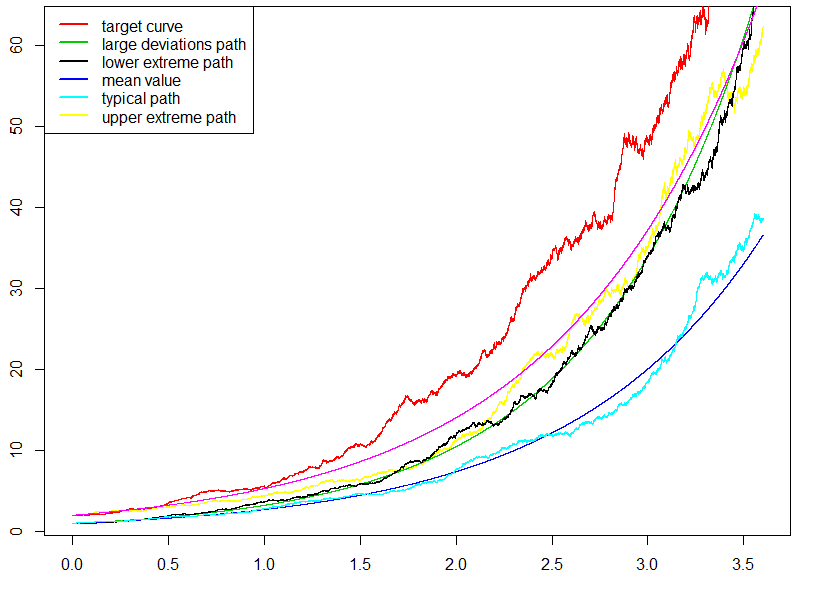}
\caption{Two independent Geometric Brownian Motions.}
\label{2GBM-optimal-path}
\end{figure}
\end{center}
\subsubsection*{Exact analysis for two correlated Brownian motions }
An exact analysis is again possible here. Suppose
\[
X_t^\epsilon \;=\; x_0 e^{\left(\alpha - \frac{1}{2} \sigma_\epsilon^2 \right) t + \sigma_\epsilon W_t}, \hspace{0.2in}
Y_t^\epsilon \;=\; y_0 e^{\left(\beta - \frac{1}{2} b_\epsilon^2 \right) t + b_\epsilon B_t},
\]
are two families of Geometric Brownian Motions, indexed by a positive parameter $\epsilon$. We will assume that $\sigma_\epsilon = \sigma \sqrt{\epsilon}$
and, similarly,  $b_e = b \sqrt{\epsilon}$. Assuming that $\alpha > \beta$ and $x_0 > y_0$ and that $\{W_t\}$, 
$\{B_t\}$. are standard Brownian motions
with correlation $\rho$ as in section \ref{sec:CGBM}, we are interested in obtaining an expression for the probability
\begin{equation} \label{Teps}
\mathbb{P}(T_\epsilon < \infty) \;\; \mbox{ where } T_\epsilon = \inf\{t>0 : Y^\epsilon_t  \geq  X^\epsilon_t\}.
\end{equation}
The condition $Y^\epsilon_t \geq X^\epsilon_t$ is equivalent to
\[
 \left(\alpha -\beta + \frac{1}{2} (b_\epsilon^2 -  \sigma_\epsilon^2) \right) t + \sigma_\epsilon W_t - b_\epsilon B_t \leq \log \frac{y_0}{x_0}.
\]
Set $\log\frac{y_0}{x_0} = - u$, $\gamma_\epsilon := \alpha -\beta + \frac{1}{2} (b_\epsilon^2 -  \sigma_\epsilon^2)$ and
$\theta_\epsilon :=\sqrt{\sigma^2_\epsilon + b^2_\epsilon-2\rho b_\epsilon \sigma_\epsilon}$. If $\{\tilde{W}_t\}$ is standard Brownian motion,
then (\ref{Teps}) becomes
\begin{equation} \label{Teps2}
\mathbb{P}(T_\epsilon < \infty) \;=\; \mathbb{P}\left( \inf_{t\geq 0} (\gamma_\epsilon t + \theta_\epsilon \tilde{W}_t) < - u \right).
\end{equation}
Since $\alpha > \beta$, when $\epsilon$ is sufficiently small, $\gamma_\epsilon >0$ regardless of the values of $\sigma$ and $b$. Therefore
(see \cite{CoxMiller}) (\ref{Teps2}) becomes
\[
\mathbb{P}(T_\epsilon < \infty) \;=\; e^{-u \frac{2\gamma_\epsilon}{\theta_\epsilon^2}}
\;=\; e^{\log\frac{y_0}{x_0}\, \frac{ 2(\alpha -\beta) + (b_\epsilon^2 -  \sigma_\epsilon^2)}{\sigma^2_\epsilon + b^2_\epsilon-2\rho b_\epsilon \sigma_\epsilon}}.
\]
It therefore follows that
\[
\lim_{\epsilon \rightarrow 0} \epsilon \log \mathbb{P}(T_\epsilon < \infty) \;=\; \log\frac{y_0}{x_0}\, \lim_{\epsilon \rightarrow 0}
 \frac{ 2(\alpha -\beta) +  (b_\epsilon^2 -  \sigma_\epsilon^2)}{\epsilon^{-1} \left(\sigma^2_\epsilon + b^2_\epsilon-2\rho b_\epsilon \sigma_\epsilon\right)} \;=\;
 \log\frac{y_0}{x_0}\,  \frac{2(\alpha -\beta)}{\sigma^2+ b^2-2\rho b \sigma}.
\]
This result of course agrees with (\ref{Optimal-I}).

\section{Appendix} 
{\small
\subsection{A time-change approach to the Ornstein-Uhlenbeck ruin problem}
Consider the two sided problem
\[
dX_t \;=\; \mu X_t dt + \sigma dW_t, \;\;\; X_0=x_0
\]
with an upper boundary given by the curve $U(t):=u_0 e^{\alpha t}$ and a lower boundary given by $V(t):=v_0 e^{\beta t}$.
We assume that $0<v_0 < x_0 < u_0$ and $0< \beta < \mu < \alpha$. We are interested in the hitting time
$T = \inf\{t\geq 0: X_T \geq U(T) \mbox{ or } X_T \leq V(T)\}$. (Of course, if the set is empty, the hitting time is
equal to $+\infty$ corresponding to the case where the process never exits from one of the two boundary curves.)
The Ornstein-Uhlenbeck process has the solution
\[
X_0 \;=\; x_0 e^{\mu t} + \sigma \int_0^t e^{\mu(t-s)} dW_s
\]
The condition
\[
V(t) < X_t < U(t)
\]
is equivalent to $e^{-\mu t} V(t) < e^{-\mu t} X_t < e^{-\mu t} U(t)$ or
\begin{equation} \label{double-ineq}
v_0 e^{-(\mu -\beta) t} < x_0 + \sigma \int_0^t e^{-s\mu} dW_s < u_0 e^{(\alpha-\mu) t}.
\end{equation}
The stochastic integral $\xi(t):=\sigma \int_0^t e^{-s\mu} dW_s$ is a Gaussian process with independent intervals and variance function
\[
{\sf Var}(\xi(t))= \sigma^2 \int_0^t e^{-2\mu s}ds= \frac{\sigma^2}{2 \mu} \left(1-e^{-2\mu t}\right).
\]
Note that the limit $\lim_{t\rightarrow \infty}{\sf Var}(\xi(t))=\frac{\sigma^2}{2 \mu}$ is finite. Consider the time change
function $\tau(t)$ defined by
\begin{equation} \label{time-change}
\tau(t) = \frac{\sigma^2}{2 \mu} \left(1-e^{-2\mu t}\right), \hspace{0.3in} t \in [0,\infty)
\end{equation}
The inverse function (which necessarily exists since ${\sf Var}(\xi(t))$ is an increasing function) is
\begin{equation} \label{time-change-2}
t(\tau) = \log\left( 1- \frac{2\mu \tau}{\sigma^2}\right), \hspace{0.3in} \tau \in \left[0,\frac{\sigma^2}{2\mu}\right)
\end{equation}

Applying this change of time to the double inequality (\ref{double-ineq}) we obtain
\[
v_0 e^{-(\mu -\beta) \log\left( 1- \frac{2\mu \tau}{\sigma^2}\right)} 
< x_0 + \sigma \int_0^{\log\left( 1- \frac{2\mu \tau}{\sigma^2}\right)} e^{-s\mu} dW_s
< u_0 e^{(\alpha-\mu) \log\left( 1- \frac{2\mu \tau}{\sigma^2}\right)}, \hspace{0.3in} \tau \in \left[0,\frac{\sigma^2}{2\mu}\right).
\]
However, $\tilde{W}_\tau := \sigma \int_0^{\log\left( 1- \frac{2\mu \tau}{\sigma^2}\right)} e^{-s\mu} dW_s$ is standard Brownian motion.
(It can easily be seen that it is a continuous martingale with quadratic variation function $\langle \tilde{W} \rangle_\tau =\tau$.)
Thus we have the equivalent problem
\begin{equation} \label{double-ineq-tc}
v_0 \left( 1- \frac{2\mu \tau}{\sigma^2} \right)^{\frac{\mu-\beta}{2\mu}} \;<\; x_0 \;+\; \tilde{W}_\tau \;<\;
u_0 \left( 1- \frac{2\mu \tau}{\sigma^2} \right)^{-\frac{\alpha-\mu}{2\mu}}\;, \hspace{0.3in} \tau \in \left[0,\frac{\sigma^2}{2\mu}\right).
\end{equation}

\begin{center}
\begin{figure}
\begin{center}
\includegraphics[width=3.0in]{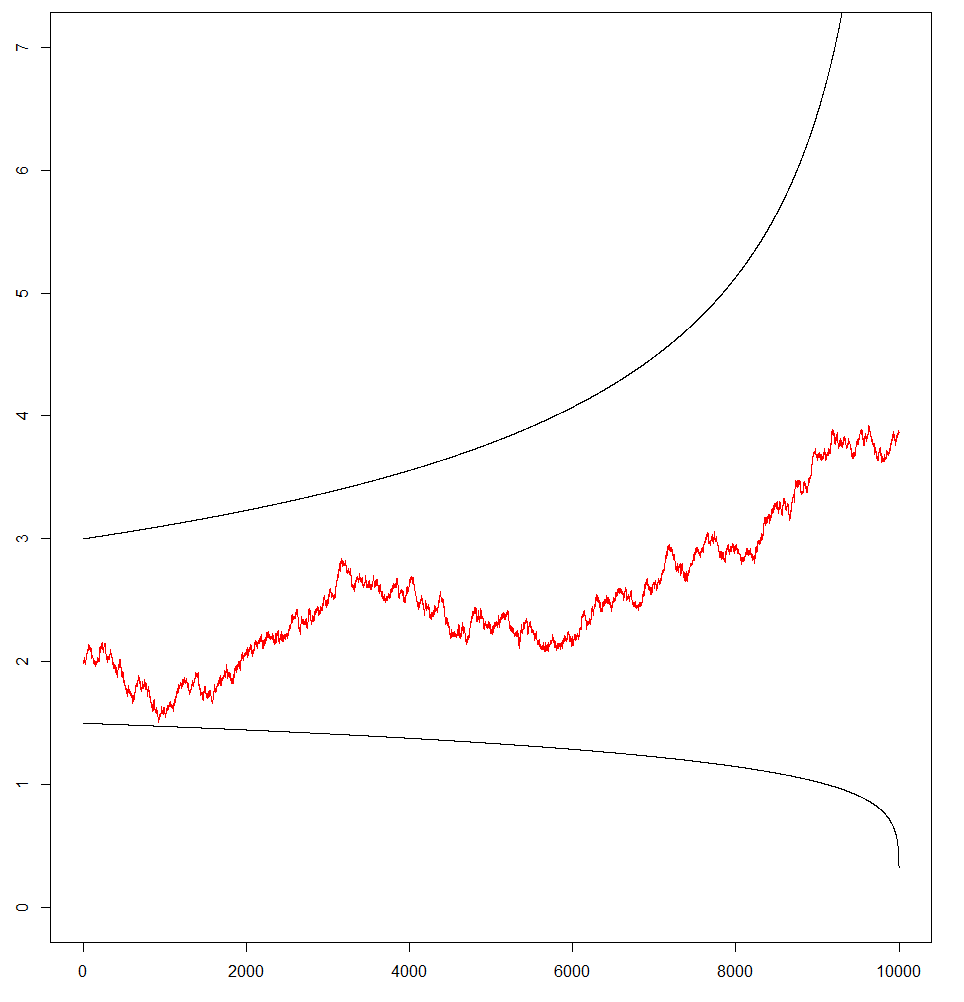}
\caption{Time-change in an Ornstein-Uhlenbeck ruin problem.}
\label{fig:time-change}
\end{center}
\end{figure}
\end{center}

In general, the passage time -- hitting probability problem associated with (\ref{double-ineq-tc}) must be
solved numerically. Of course the time change transformation may have computational advantages. There is a great
deal of work, both theoretical and applied, regarding passage times and hitting probabilities of Brownian motion
with curving boundaries. In the special case where $\alpha = \beta = \mu$ an exact solution exists. In general we
have not been able to obtain closed form expressions even with a single boundary even in the few cases where
exact solutions are known, such as for a parabolic boundary: When $\beta=0$ then the time-changed lower bound
is $v_0\sqrt{ 1- \frac{2\mu \tau}{\sigma^2}}$. While this is a parabolic boundary, the results that have obtained
for this case, \cite{Durbin-Williams}, \cite{Durbin}, apply when it acts as an {\em upper} and not a lower boundary.
Therefore, the exact solution in this case is not known, to the best of our knowledge.

\paragraph{A two--boundary case: $\alpha = \beta = \mu$.} In that case (\ref{double-ineq-tc}) becomes
\[
v_0 - x_0 \; < \; \tilde{W}_\tau \; < \; u_0-x_0,  \hspace{0.2in} \tau \in \left[0,\frac{\sigma^2}{2\mu}\right).
\]
The exact probability of never exiting either boundary, can be obtained from the well known expression for the
density of standard Brownian motion (starting at zero) with absorbing boundaries at $a$, $b$, ($a,b >0$).
If $p(x,t)dx := \mathbb{P}\left(W_t \in (x,x+dx);\; -b < W_s < a,\; 0\leq s \leq t\right)$, then,
(see \cite[p.222]{CoxMiller})
\begin{eqnarray*}
p(x,t) &=& \sum_{n=1}^\infty \frac{2}{a+b} \sin\left( \frac{n\pi b}{a+b} \right) e^{-\lambda_n t} \,\sin\left( n\pi \frac{x+b}{a+b}\right), \\
&&\hspace{0.2in} \mbox{ where } \lambda_n = \frac{1}{2} \frac{n^2 \pi^2}{(a+b)^2}, \;\; n=1,2,\ldots.
\end{eqnarray*}
Then
\[
\mathbb{P}\left(-b < W_s < a,\;\mbox{ for } 0\leq s \leq t \right) = \int_{-b}^a p(x,t) dx,
\]
and in our case $-b = v_0-x_0$, $a= u_0-x_0$, $t= \frac{\sigma^2}{2\mu}$. Hence,
\begin{eqnarray}   \label{series-boundary}
 \mathbb{P}\left(-b < W_s < a,\; 0\leq s \leq \frac{\sigma^2}{2\mu} \right) && \\    \nonumber
&& \hspace{-1.5in}\;=\; \sum_{k=0}^\infty \frac{4}{(2k+1)\pi}  
\exp\left({-\frac{(2k+1)^2\pi^2\sigma^2}{2(u_0-v_0)^2\mu}}\right)  \sin\frac{(2k+1)\pi (x_0-v_0)}{u_0-v_0} .
\end{eqnarray}

\subsection{The paths $x(\cdot)$ and $V(\cdot)$.} \label{sec:app-t2}
Here we refer to part 3 of the proof of Theorem \ref{th:OU-1}. The comparison between the slope of the optimal path 
$x(\cdot)$ and $V(\cdot)$ at the intersection point $t$ is given by the following 
\begin{proposition}  \label{prop:s2}
\begin{equation} \label{ipc-d}
\sgn( x'(t) - V'(t)) = \left\{ \begin{array}{rcc} -1 & \mbox{if} & t < t_2 \\ 0 & \mbox{if}&  t=t_2 \\ +1 & \mbox{if} & t >t_2 \end{array} 
\right.
\end{equation}
where $t_2$ is the unique solution of the equation $\phi_2(t)  \;=\; 2\frac{x_0}{v_0}$
with
\begin{equation} \label{ph2_def}
\phi_2(s) \; :=\; \left( 1 - \frac{\beta}{\mu} \right) e^{(\beta+\mu)s} + \left( 1 + \frac{\beta}{\mu} \right) e^{(\beta-\mu)s}.
\end{equation}
Also,
\begin{equation} \label{t1t2}
t_1 < t^o_V< t_2
\end{equation}
where $t_1$ is defined in (\ref{inequality-condition-1}) and $t^o_V$ in (\ref{eq-phi1}).
\end{proposition}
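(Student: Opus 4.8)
The plan is to reduce the sign comparison in (\ref{ipc-d}) to the monotonicity of the explicit function $\phi_2$ defined in (\ref{ph2_def}), and then to pin down the ordering (\ref{t1t2}) by means of a single algebraic identity linking $\phi_2$ and $\phi_V$.

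First I would take the optimal path of Part 1 in the form (\ref{ch2:gensol1})--(\ref{ch2:gensol2}), so that $x'(s) = \mu c_1 e^{\mu s} - \mu c_2 e^{-\mu s}$, and evaluate at $s=t$. Multiplying out $c_1 e^{\mu t}$ and $c_2 e^{-\mu t}$ gives
\[
x'(t) \;=\; \frac{\mu v_0}{e^{\mu t}-e^{-\mu t}}\left( e^{(\beta+\mu)t} + e^{(\beta-\mu)t} - \frac{2x_0}{v_0}\right).
\]
Subtracting $V'(t) = \beta v_0 e^{\beta t}$, bringing everything over the common denominator $e^{\mu t}-e^{-\mu t}$ (positive for $t>0$) and collecting the coefficients of $e^{(\beta+\mu)t}$ and $e^{(\beta-\mu)t}$ turns the numerator into $\mu v_0\bigl(\phi_2(t) - \tfrac{2x_0}{v_0}\bigr)$. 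Since the prefactor $\mu v_0/(e^{\mu t}-e^{-\mu t})$ is strictly positive on $(0,\infty)$, this already yields $\sgn\bigl(x'(t)-V'(t)\bigr) = \sgn\bigl(\phi_2(t)-\tfrac{2x_0}{v_0}\bigr)$. This reduction is the one genuinely computational step, and the place where the sign bookkeeping must be done with care; I expect it to be the main obstacle, though a routine one.

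Next I would study $\phi_2$ on $[0,\infty)$. One has $\phi_2(0) = (1-\tfrac{\beta}{\mu}) + (1+\tfrac{\beta}{\mu}) = 2$ and
\[
\phi_2'(s) \;=\; \frac{\mu^2-\beta^2}{\mu}\, e^{\beta s}\bigl(e^{\mu s}-e^{-\mu s}\bigr) \;>\; 0 \qquad \text{for } s>0,
\]
so $\phi_2$ is strictly increasing, while the term $(1-\tfrac{\beta}{\mu})e^{(\beta+\mu)s}$ (recall $\beta<\mu$) forces $\phi_2(s)\to+\infty$. Since $x_0>v_0$ gives $2x_0/v_0 > 2 = \phi_2(0)$, there is a unique $t_2>0$ with $\phi_2(t_2) = 2x_0/v_0$, and the trichotomy (\ref{ipc-d}) follows at once from monotonicity.

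Finally, for the ordering (\ref{t1t2}), the inequality $t_1<t^o_V$ is already recorded in (\ref{t1to}), so only $t^o_V<t_2$ remains. Here I would use the identity
\[
\phi_2(t) - 2\phi_V(t) \;=\; \Bigl(1-\frac{\beta}{\mu}\Bigr) e^{\beta t}\bigl(e^{-\mu t}-e^{\mu t}\bigr) \;<\; 0 \qquad \text{for } t>0,
\]
a one-line check from the definitions of $\phi_V$ in (\ref{topt-v}) and $\phi_2$ in (\ref{ph2_def}). Evaluating at $t = t^o_V$ and invoking $\phi_V(t^o_V) = x_0/v_0$ from (\ref{eq-phi1}) gives $\phi_2(t^o_V) < 2x_0/v_0 = \phi_2(t_2)$, whence strict monotonicity of $\phi_2$ yields $t^o_V < t_2$. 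Beyond the first reduction I do not anticipate any serious difficulty; the remaining steps are elementary monotonicity arguments.
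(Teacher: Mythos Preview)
Your proposal is correct and follows essentially the same route as the paper: the reduction of $\sgn(x'(t)-V'(t))$ to $\sgn(\phi_2(t)-2x_0/v_0)$, the monotonicity of $\phi_2$ with $\phi_2(0)=2$, and the ordering via evaluating $\phi_2$ at $t^o_V$ are all as in the original. The only cosmetic difference is in the last step: the paper uses the identity $\phi_2(t)=\phi_V(t)+e^{(\beta-\mu)t}$ and then bounds $e^{(\beta-\mu)t^o_V}<1<x_0/v_0$, whereas you use the (equivalent, and arguably cleaner) identity $\phi_2(t)-2\phi_V(t)=(1-\beta/\mu)e^{\beta t}(e^{-\mu t}-e^{\mu t})<0$ to conclude $\phi_2(t^o_V)<2x_0/v_0$ directly.
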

\begin{proof}
Taking into account (\ref{optimal-path-finite}), 
$x'(t)-V'(t) \;=\; \mu \frac{v_0 e^{\beta t} \left(e^{\mu t}+ e^{-\mu t} \right) - 2x_0 }{e^{\mu t} - e^{-\mu t}} -
\beta v_0 e^{\beta t}$ and hence 
\begin{equation} \label{Derivative-Condition}
x'(t)-V'(t) <0 \; \Leftrightarrow \; \left( 1 - \frac{\beta}{\mu} \right) e^{(\beta+\mu)t} 
+ \left( 1 + \frac{\beta}{\mu} \right) e^{(\beta-\mu)t} < 2 \frac{x_0}{v_0}.
\end{equation}
Defining the function $\phi_2$ via (\ref{ph2_def}) we note that 
$\phi'_2(s) = (\mu^2-\beta^2) \frac{2}{\mu}e^{\beta s} \sinh(\mu s) >0$ for all $s\geq 0$ and 
$\phi_2(0)=2$. Hence, the equation $\phi_2(s) \;=\; 2\frac{x_0}{v_0}$
has a unique, positive solution, say $t_2$. Since the function $\phi_2(s)$ is continuous and strictly increasing this 
establishes (\ref{ipc-d}).

Next we will show that
\begin{equation} \label{inequalityt2o}
t^o_V \;<\; t_2 .
\end{equation}
Indeed, using the definition of $\phi_1$ and $t^o$,
\begin{eqnarray*}
\phi_2(t^o_V) &=& \left( 1 - \frac{\beta}{\mu} \right) e^{(\beta+\mu)t^o_V} 
+ \left( 1 + \frac{\beta}{\mu} \right) e^{(\beta-\mu)t^o_V} \\
&=& \phi_1(t^o_V) \;+\;  e^{(\beta-\mu)t^o_V} < \frac{x_0}{v_0} + 1 \;<\; 2 \frac{x_0}{v_0} \;=\; \phi_2(t_2).
\end{eqnarray*}
where we have used the fact that $\beta-\mu <0$ and that $x_0 > v_0$. Then (\ref{inequalityt2o}) follows from the fact
that $\phi_2$ is increasing.

Finally note that 
$\phi_2(t_1) = \frac{x_0}{v_0}\left( 1-\frac{\beta}{\mu} + \left(1+\frac{\beta}{\mu}\right) e^{-2\mu t} \right) 
< 2 \frac{x_0}{v_0}$
which implies, since $\phi_2$ is strictly increasing, (\ref{t1t2}).
\end{proof}

Define the function $h(s):=x(s)-V(s)$. We have $h(0)=x(0)-V(0)=x_0-v_0 >0$. Also $h(t) =x(t)-V(t)=0$. 
We will show that, when $t> t_1$, there are precisely two zeros of the function $h$ on $[0,\infty)$, $t$ and
$\tau(t)$. When $t \in (t_1,t_2)$ $\tau(t) >t$ whereas when $t>t_2$, $\tau(t)<t$. In the special case $t=t_2$,
$\tau(t_2)=t_2$ is the single zero of $h$ at which $h'$ also vanishes.

We have
\begin{equation} \label{hprime}
h'(s) \;=\; \mu c_1 e^{\mu s} - \mu c_2 e^{-\mu s} + \beta v_0 e^{\beta s}.
\end{equation}
The following proposition gives some qualitative properties of this function.
\begin{proposition}
Suppose $t>t_1$. Then there exists $s_1(t)>0$ such that $h'(s)<0$ when $s<s_1(t)$, $h'(s_1)=0$, and $h'(s)>0$ when $s>s_1(t)$. 
Also $\lim_{s\rightarrow \infty} h'(s) = +\infty$ and the following holds:   There are precisely two values for which the function $h$ 
vanishes. One is $t$ while the second we denote by $\tau(t)$. If $t < s_1(t)$ then $\tau(t)>t$ while if $t>s_1(t)$ then $\tau(t)< t$.
When $t=s_1(t)$ then $t=\tau(t)$ and $h(t)=h'(t)=0$.  
\end{proposition}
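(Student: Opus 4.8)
The plan is to reduce the whole proposition to the monotonicity of one auxiliary function. First I would record the sign facts that hold throughout Case~3: since $t>t_1$ we have $c_1>0$ by \eqref{inequality-condition-1}, while $c_2>0$ always (as $\mu>\beta$ and $x_0>v_0$); from $x(s)=c_1e^{\mu s}+c_2e^{-\mu s}$ and $V(s)=v_0e^{\beta s}$ this gives $h(s)=c_1e^{\mu s}+c_2e^{-\mu s}-v_0e^{\beta s}$ and $h'(s)=\mu c_1e^{\mu s}-\mu c_2e^{-\mu s}-\beta v_0e^{\beta s}$. The key step is to introduce
\[
p(s)\;:=\;e^{-\beta s}\,h'(s)\;=\;\mu c_1 e^{(\mu-\beta)s}-\mu c_2 e^{-(\mu+\beta)s}-\beta v_0,
\]
which, because $e^{-\beta s}>0$, has the same zeros and the same sign as $h'$ at every $s$. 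Its derivative is $p'(s)=\mu(\mu-\beta)c_1 e^{(\mu-\beta)s}+\mu(\mu+\beta)c_2 e^{-(\mu+\beta)s}$, which is strictly positive for all $s\ge0$ (this is exactly where $c_1>0$, $c_2>0$, $0<\beta<\mu$ enter), so $p$ is strictly increasing, and $p(s)\to+\infty$ as $s\to\infty$ since the first term dominates.

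Next I would locate the unique zero of $p$. Since $h(0)=x_0-v_0>0$ and $h(t)=0$, the mean value theorem gives a point $\xi\in(0,t)$ with $h'(\xi)<0$, hence $p(\xi)<0$. A strictly increasing continuous function that is negative at $\xi$ and tends to $+\infty$ has a unique zero $s_1=s_1(t)$, and $s_1>\xi>0$. Then $p<0$ on $[0,s_1)$ and $p>0$ on $(s_1,\infty)$, so the same holds for $h'=e^{\beta s}p$, and $\lim_{s\to\infty}h'(s)=\lim_{s\to\infty}e^{\beta s}p(s)=+\infty$; this is the first assertion of the proposition.

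It then remains to place the zeros of $h$. From the sign of $h'$ just obtained, $h$ is strictly decreasing on $[0,s_1]$ and strictly increasing on $[s_1,\infty)$; since $h(0)>0$ and $h(s)\to+\infty$ as $s\to\infty$ (using $c_1>0$ and $\mu>\beta$), the global minimum of $h$ is attained at $s_1$, and $h(s_1)\le0$ because $h(t)=0$. If $h(s_1)<0$, then $h$ has exactly one zero in $(0,s_1)$ and exactly one in $(s_1,\infty)$, hence precisely two zeros, one of which is $t$; calling the other $\tau(t)$, we get $\tau(t)>t$ when $t<s_1$ and $\tau(t)<t$ when $t>s_1$. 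If $h(s_1)=0$, then $h(s)>0$ for every $s\ne s_1$, so $t=s_1$ is the only zero; setting $\tau(t):=t$ this is precisely the case $t=s_1$, in which also $h'(t)=h'(s_1)=0$. Since $h(s_1)=0$ is equivalent to $t=s_1$, the three cases exhaust and match the trichotomy $t<s_1$, $t=s_1$, $t>s_1$, completing the proof. I do not expect any serious obstacle; the one point that needs care is the choice of the auxiliary function $p=e^{-\beta s}h'$ (equivalently, that $g(s):=e^{-\beta s}h(s)=c_1e^{(\mu-\beta)s}+c_2e^{-(\mu+\beta)s}-v_0$ is strictly convex because $c_1,c_2>0$), because $h'$ itself is a sum of three exponentials and its derivative $h''$ need not be of constant sign.
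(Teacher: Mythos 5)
Your proof is correct and follows the same basic strategy as the paper's: multiply $h'$ by a suitable decaying exponential to obtain a strictly increasing auxiliary function whose sign agrees with that of $h'$, then read off the U-shape of $h$ (strictly decreasing on $[0,s_1]$, strictly increasing on $[s_1,\infty)$ with $h(0)>0$ and $h\to+\infty$). Two minor differences are worth recording. First, the paper normalizes by $e^{-\mu s}$, obtaining $h_1(s)=\mu c_1-\mu c_2 e^{-2\mu s}-\beta v_0 e^{(\beta-\mu)s}$ which increases to the finite limit $\mu c_1>0$, whereas you normalize by $e^{-\beta s}$, obtaining $p(s)$ which increases to $+\infty$; both give the unique zero $s_1$. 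Second, and more usefully, the paper establishes $h'(0)<0$ by several lines of direct computation involving ratios of integrals of exponentials, whereas you obtain a point $\xi\in(0,t)$ with $h'(\xi)<0$ essentially for free by applying the Mean Value Theorem to $h$ on $[0,t]$, using only $h(0)=x_0-v_0>0$ and $h(t)=0$. This shortcut eliminates the messiest estimate in the paper's argument while proving the same conclusion, and your final case analysis on the sign of $h(s_1)$ is cleaner than the paper's (which loops back through Proposition~\ref{prop:s2} and uses some inconsistent notation). One small caveat: the parenthetical ``equivalently'' at the end of your write-up is a slight overstatement, since $g'=e^{-\beta s}(h'-\beta h)$ is not sign-equivalent to $h'$; the convexity of $g=e^{-\beta s}h$ would give an independent route to counting the zeros of $h$, but it is not literally the same computation as the monotonicity of $p$. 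This does not affect the validity of the main argument.
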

\begin{proof} First we will show that $h'(0)<0$. Indeed,
\begin{eqnarray*}
h'(0) &=& \mu(c_1-c_2) -\beta v_0 \;=\; \mu\; \frac{ 2v_0 e^{\beta t} -x_0(e^{\mu t} + e^{-\mu t})}{e^{\mu t} - e^{-\mu t}} - \beta v_0 \\
&=& \frac{2\mu}{e^{2\mu t} - 1} \, v_0 e^{(\beta+\mu) t} -v_0 \beta - \mu x_0 \, \frac{e^{2\mu t} +1}{e^{2\mu t} - 1} \\
&=& \frac{2\mu}{e^{2\mu t} - 1} \, v_0 \left(e^{(\beta+\mu) t}-1\right)  -v_0 \beta +v_0 \frac{2\mu}{e^{2\mu t} - 1}  
- \mu x_0 \, - x_0 \frac{2\mu}{e^{2\mu t} - 1} \\
&=& \frac{\int_0^t e^{(\beta+\mu)\xi} d\xi }{\int_0^t e^{2\mu \xi} d\xi } \, (\beta+\mu) v_0   -v_0 \beta +v_0 \frac{2\mu}{e^{2\mu t} - 1}  
- \mu x_0 \, - x_0 \frac{2\mu}{e^{2\mu t} - 1}
\end{eqnarray*}
The ratio of integrals above is seen to be less than one (since $\beta <\mu$) and hence
\begin{eqnarray*}
h'(0) &\leq &  (\beta+\mu) v_0   -v_0 \beta +v_0 \frac{2\mu}{e^{2\mu t} - 1}  
- \mu x_0 \, - x_0 \frac{2\mu}{e^{2\mu t} - 1} \\
&=& (v_0 - x_0) \, \left( \mu + \frac{2\mu}{e^{2\mu t} - 1} \right)  < 0.
\end{eqnarray*}

From (\ref{hprime}) we see that $h'(s) = e^{\mu s} h_1(s)$ with 
$h_1(s) := \mu c_1  - \mu c_2 e^{-2\mu s} + \beta v_0 e^{-(\mu-\beta) s}$. Clearly $h'(s)$ and $h_1(s)$ have the same sign.
Also, $h_1(0)= h'(0)<0$ and since $c_1 >0$, $c_2>0$, (the first because $t>t_2$) and $\mu>\beta$, it follows that $h_1(s)$ 
is strictly increasing in $s$ and satisfies $h_1(s) \uparrow \mu c_1 >0$ as $s\uparrow \infty$. Therefore there exists a unique 
$s_1>0$ such that $h_1(s_1)=0$. 

We have course $h(t)=0$. Since the value of $s_2$ determined in Proposition \ref{prop:s2} depends on $t$ 
we will use the notation $s_2(t)$. Then, 
\begin{itemize}
\item If $t \in (t_1,t_2)$  then, from Proposition \ref{prop:s2}, $h'(t)<0$ which implies, in view of the above analysis that $s_2(t) >t$. 
This in turn means that
$\tau(t)>s_2(t)$ and hence that $t<\tau(t)$.
\item If $t =t_2$ then $h'(t_2)=0$ which implies that $s_2(t_2) =t_2$. 
\item If $t >t_2$ then $h'(t)>0$ which implies that $s_2(t) <t$ and hence that $\tau(t)< s_2(t)$. Thus in this case $\tau(t) <t$.
\end{itemize}
This concludes the proof of the proposition.
\end{proof}

}



\vspace{0.2in}

 \begin{center}

\begin{figure}[h!]
\includegraphics[width=5.0in]{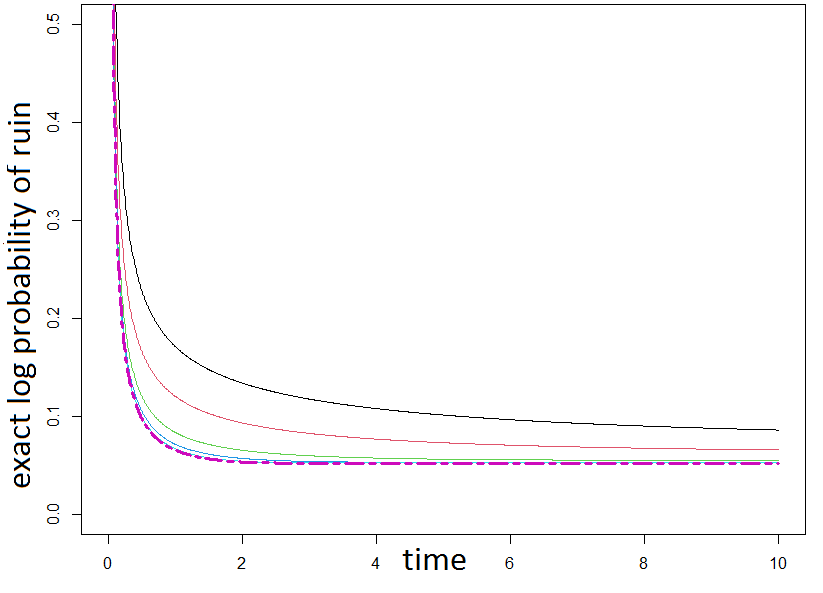}
\caption{Logarithm of Hitting Probability and Comparison with the Wentzell-Freidlin low variance limit}
\label{fig:GBM-3}
\end{figure}

%
%
\begin{figure}
\includegraphics[width=4.0in]{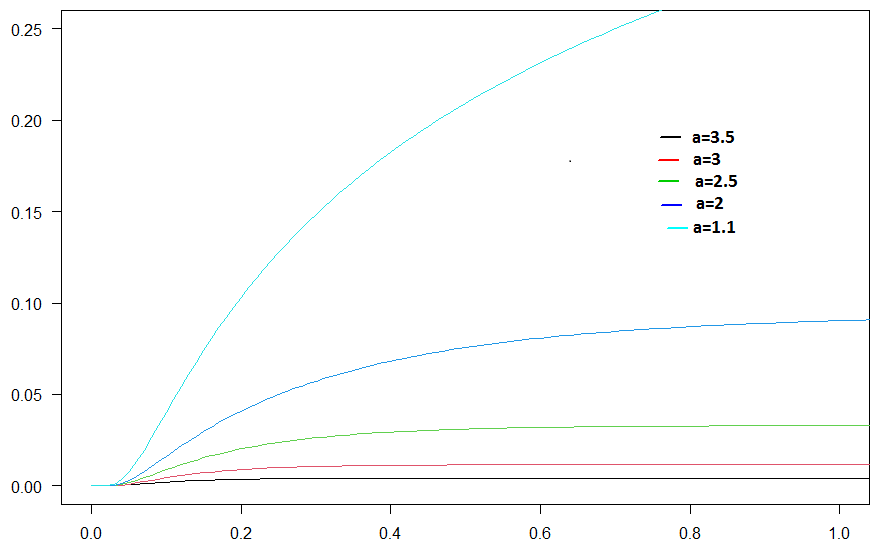}
\caption{Probability of hitting the upper boundary as a function of time horizon based on the exact solution 
(\ref{the-exact-solution}). Here $\sigma=0.5$, $x_0=1$, $u_0=1.3$, $\mu=1$.
The function is plotted for $\alpha=1.1$, 2, 2.5, 3, 3.5.}
\end{figure}

\begin{figure}
\includegraphics[width=4.0in]{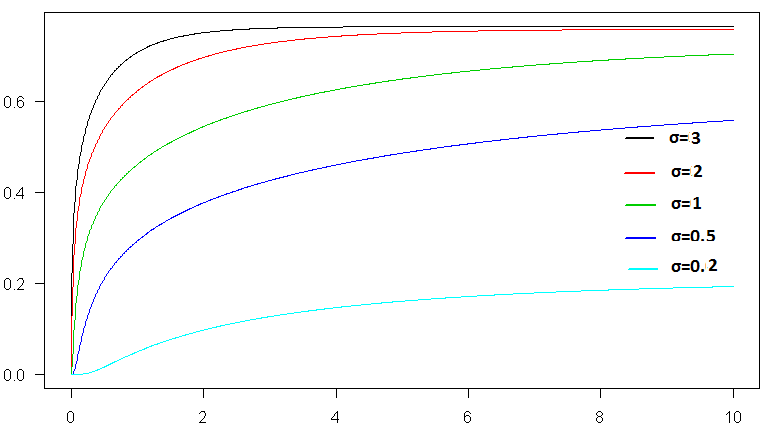}
\caption{Probability of hitting the upper boundary as a function of time horizon based on the exact solution 
(\ref{the-exact-solution}). Here  $x_0=1$, $u_0=1.3$, $\mu=1$ $\alpha=1.1$.
The function is plotted for $\sigma=0.2$, 0.5, 1, 2, 3.}
\end{figure}

\begin{figure}
\includegraphics[width=4.0in]{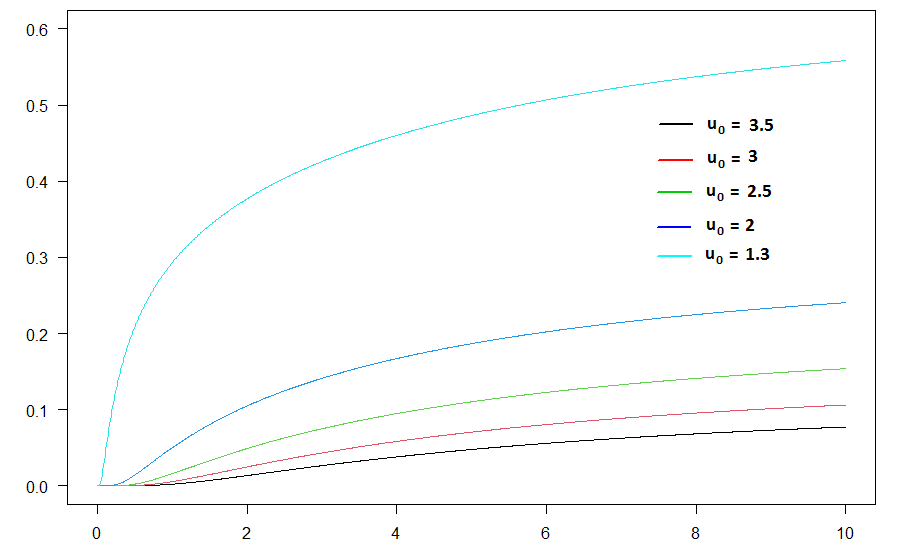}
\caption{Probability of hitting the upper boundary as a function of time horizon based on the exact solution 
(\ref{the-exact-solution}). Here  $x_0=1$, $\alpha=1.1$, $\mu=1$ $\sigma=0.5$.
The function is plotted for $u_0=1.3$, 2, 2.5, 3, 3.5.}
\end{figure}

\begin{figure}
\includegraphics[width=4.5in]{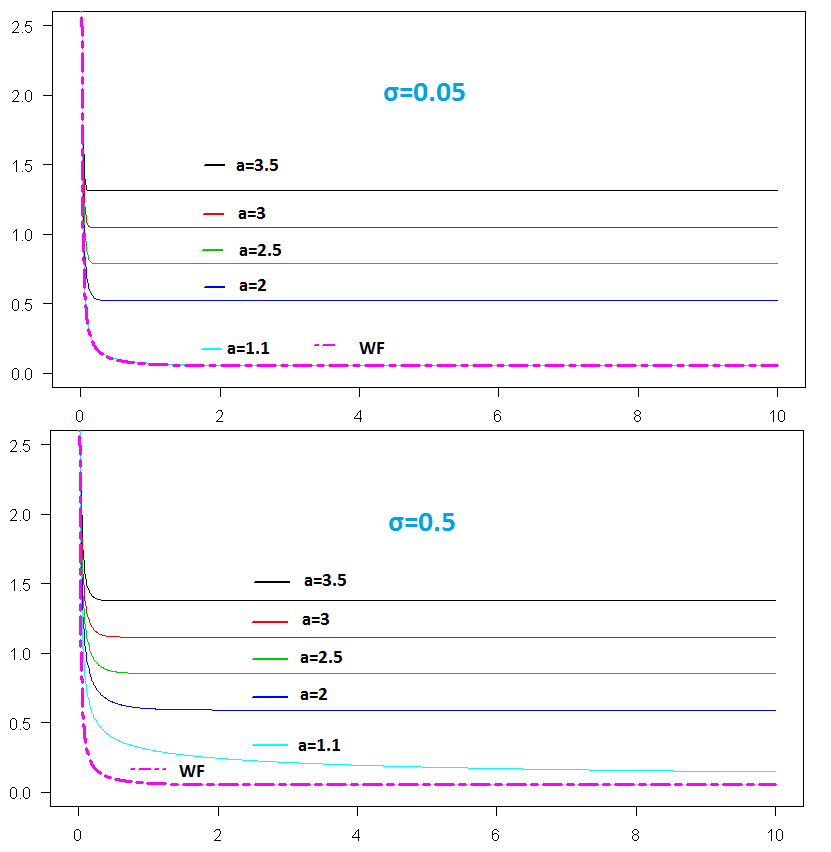}
\caption{ $-log$ Probability of hitting the upper boundary based on the exact solution (\ref{the-exact-solution}). 
Here  $x_0=1$, $u_0=1.3$, $\mu=1$. The upper graph was obtained for $\sigma=0.05$ while the lower for 
$\sigma=0.5$. The magenta dotted line gives the value of (the exponent of) the Wentzell-Freidlin approximation.}
\end{figure}

\begin{figure}
\includegraphics[width=4.5in]{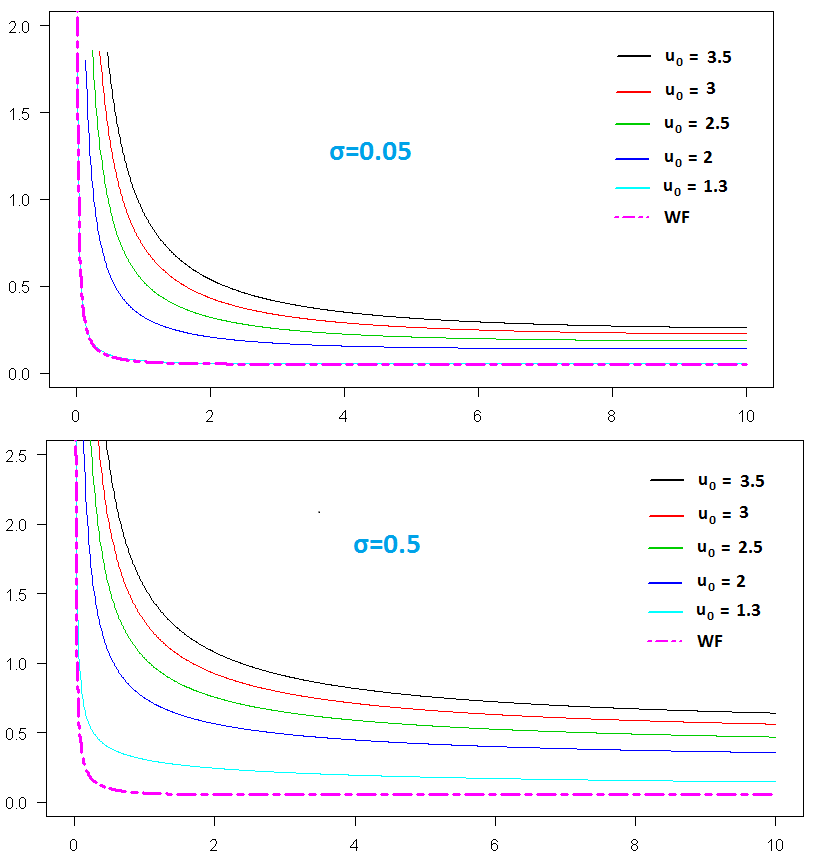}
\caption{ $-log$ Probability of hitting the upper boundary based on the exact solution (\ref{the-exact-solution}). 
Here  $x_0=1$,  $\mu=1$, $\alpha=1.3$. The upper graph was obtained for $\sigma=0.05$ while the lower 
for $\sigma=0.5$. The magenta dotted line gives the value of (the exponent of) the Wentzell-Freidlin approximation.}

\end{figure}
\begin{figure}
\includegraphics[width=4.5in]{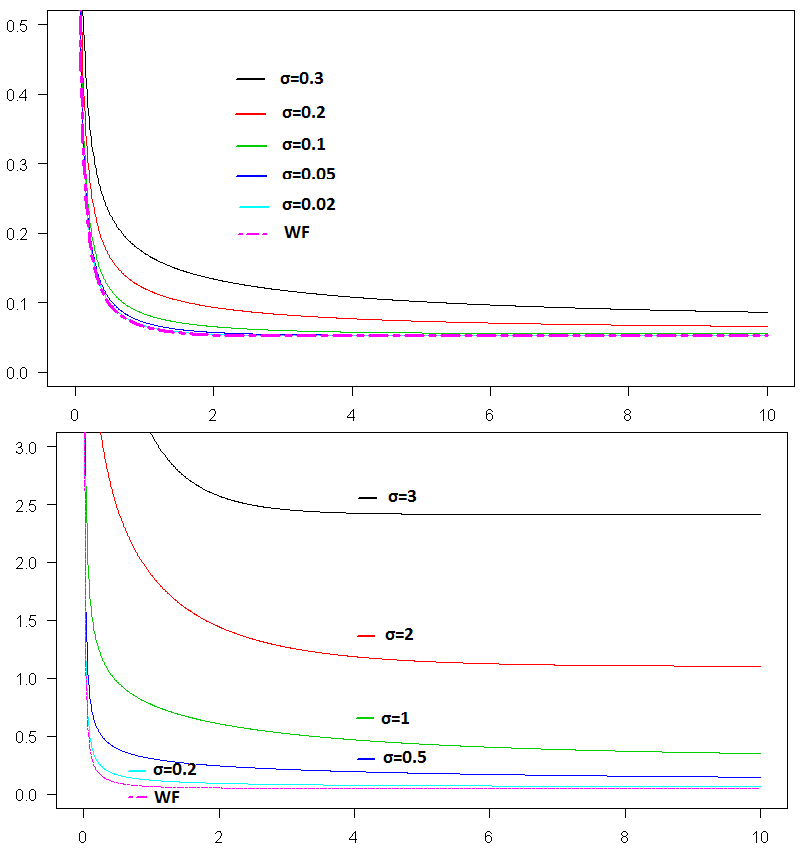}
\caption{ $-log$ Probability of hitting the upper boundary based on the exact solution (\ref{the-exact-solution}). 
Here  $x_0=1$, $u_0=1.3$,  $\mu=1$, $\alpha=1.1$. The magenta dotted line gives the value of (the exponent of) 
the Wentzell-Freidlin approximation.}
\end{figure}
\end{center}


\begin{thebibliography}{99}

\bibitem{AS} Asmussen, S. and M. Steffensen. 2020. {\em Risk and Insurance, A Graduate Text.} Springer.

\bibitem{B-M}
U. Brechtken-Manderscheid. 1994. {\em Introduction to the Calculus of Variations}. Chapman \& Hall.

\bibitem{DZ} A. Dembo and O. Zeitouni. {\em Large Deviations Techniques and Applications.} Second Edition. Springer 2010.

\bibitem{GS2}
H. U. Gerber and E.S.W. Shiu, ``Geometric Brownian Motion Models for Assets and Liabilities:
From Pension Funding to Optimal Dividends'', {\em North American Actuarial Journal}, 7(3), 37-51, 2003.


\bibitem{FW}
Mark I. Freidlin,  Alexander D. Wentzell, {\em Random Perturbations of Dynamical Systems}, 3rd edition,
Springer Science \& Business Media, 2012.

\bibitem{Harrison}
J. Michael Harrison, ``Ruin Problems with Compounding Assets,'' {\em Stochastic Processes and their Applications,} 5, 67-79, 1977.

\bibitem{Kamien-Schwarzt} Morton I. Kamien and Nancy L. Schwartz. 1991. {\em Dynamic optimization. The calculus of variations
and optimal control in economics and management}, 2nd ed. North-Holland.

\bibitem{Maier-Stein}
Robert S. Maier, Daniel L. Stein, Limiting Exit Location Distributions in the Stochastic Exit Problem
{\em SIAM Journal on Applied Mathematics,} Vol. 57, No. 3. (Jun., 1997), pp. 752-790.

\bibitem{Baldi-Caramellino}
P. Baldi, L. Caramellino. General Freidlin–Wentzell Large Deviations and positive diffusions. {\em Statistics and
	 Probability Letters,} 81 (2011), 1218-1229.

\bibitem{Williams} Randal G. Williams. The Problem of Stochastic Exit. {\em SIAM J. Appl. Math.} 40, 2, 208-223. 1981.

\bibitem{Simonian} Alain Simonian. ``Asymptotic Distribution of Exit Times for Small-Noise Diffusion.'' 
{\em SIAM J. Appl. Math.} 55, 3, 809-826. 1995.

\bibitem{Matkowsky-Schuss} R. J. Matkowsky and Z. Schuss. ``The Exit Problem for Randomly Perturbed Dynamical 
Systems'' {\em SIAM J. Appl. Math.} 33, 2, 365-382. 1977.

\bibitem{Cottrell} Marie Cottrell, Jean-Claude Fort, and G\'erard Malgouyres. ``Large Deviations and Rare Events in 
the Study of Stochastic Algorithms.'' {\em IEEE Transactions on Automatic Control,} AC-28, 9, 907-920. 1983.

\bibitem{Baldi} Paolo Baldi. ``Exact Asymptotics for the Probability of Exit from a Domain and Applications to Simulation.'' 
{\em The Annals of Probability}, 23, 4. 1644-1670, 1995.




\bibitem{Olivieri}
Enzo Olivieri {\em Large Deviations and Metastability.} Cambridge University Press, 2005.



\bibitem{CoxMiller}
Cox, D.R. and H.D. Miller. {\em The Theory of Stochastic Processes.} Chapman and Hall/CRC, 1977.

\bibitem{Collamore}
Collamore, J.F. ``Importance sampling techniques for the multidimensional ruin problem for
general Markov additive sequences of random vectors.'' {\em The Annals of Applied Probability} 12 (1), 382–421, 2002.

\bibitem{Pinch}
Enid R. Pinch. {\em Optimal Control and the Calculus of Variations.} Oxford Science Publications, 1993.





\bibitem{Schilder} M. Schilder, Asymptotic formulas for Wiener integrals, {\em Trans. Amer. Math. Soc.} 125, 63–85, 1966.

\bibitem{Gao} F. Gao, J. Ren, Large deviations for stochastic flows and their applications, {\em Sci. China Ser.} A 44 (8) 1016–1033,  2001.


\bibitem{Salminen}
Paavo Salminen. On the First Hitting Time and the Last Exit Time for a Brownian Motion to/from a Moving Boundary.
{\em Advances in Applied Probability,} Vol. 20, No. 2, 411-426, 1988.

\bibitem{Herrmann}
S. Herrmann and E. Tanr\'e. The First-Passage Time of the Brownian Motion to a Curved Boundary: An Algorithmic Approach.
{\em SIAM J. SCI. COMPUT.} 38, 1, A196–A215. 2016.

\bibitem{Durbin-Williams}
J. Durbin and D. Williams. The First-Passage Density of the Brownian Motion Process to a Curved Boundary. 
{\em Journal of Applied Probability,} 29, 2, 291-304, 1992.

\bibitem{Durbin}
J. Durbin. The First-Passage Density of a Continuous Gaussian Process to a General Boundary. 
{\em Journal of Applied Probability,} 22, 1, 99-122, 1985.

\bibitem{Dong-Cui}
Qinglai Dong and Lirong Cui. First Hitting Time Distributions for Brownian Motion and Regions with Piecewise 
Linear Boundaries. {\em Methodol. Comput. Appl. Probab.} 21:1–23, 2019.

\bibitem{Fleming-Souganidis}
Wendell H. Fleming and Panagiotis E. Souganidis. PDE-viscosity solution approach to some problems of large deviations,
{\em Annali della Scuola Normale Superiore di Pisa, Classe di Scienze 4e s\'erie, tome 13,} no 2, p.\ 171-192, 1986.

\bibitem{Fleming-James}
W.H. Fleming and M.R. James. Asymptotic  Series And Exit Time Probabilities. {\em The  Annals of Probability}, 
Vol. 20, No. 3, 1369-1384, 1992.

\bibitem{Fleming1}
W.H. Fleming. Exit Probabilities and Optimal Stochastic Control. {\em Applied Mathematics and Optimization}, 
4. 329-346. 1978.

\bibitem{Fleming2}
Wendell H. Fleming. Stochastic Control for Small Noise Intensities. {\em SIAM J. Control,} Vol. 9, No. 3, 473-517, 1971.

\bibitem{Sheu}
Shuenn-Jyi Sheu. Asymptotic Behavior Of The Invariant Density of a Diffusion Markov Process With Small 
Diffusion. {\em SlAM J. MATH. ANAL.} Vol. 17, No. 2, 451-460, 1986.


\bibitem{Azencott}
R. Azencott. Petites perturbations al\'eatoires des syst\`emes dynamiques: d\'eveloppements asymptotiques.
{\em Bulletin des sciences math\'ematiques}, Vol 109, Num 3, pp 253-308, 1985.

\end{thebibliography}
\end{document}